
\setcounter{tocdepth}{4}

\documentclass[12pt]{amsart}
\usepackage{amscd,amssymb}
\usepackage[mathscr]{eucal}
\usepackage[english]{babel}
\usepackage{tikz}
\usepackage{comment}

\usepackage{color}
\usepackage{pstricks, pst-node}
\input xy
\usepackage[all]{xy}

\usepackage{amsmath}
   \usepackage{ucs}
   \usepackage{amsmath}
   \usepackage{amsfonts}
   \usepackage{amssymb}
   \usepackage{yhmath}

\usepackage[latin1]{inputenc}%
\usepackage{a4}

\newlength{\itemlaenge}



\newtheoremstyle{mytheorem}
  {}
  {}
  {\slshape}
  {}
  {\scshape}
  {.}
  { }
  {}

\newtheoremstyle{mydefinition}
  {}
  {}
  {\upshape}
  {}
  {\scshape}
  {.}
  { }
  {}

\theoremstyle{mytheorem}
\newtheorem{lemma}{Lemma}[section]
\newtheorem{prop}[lemma]{Proposition}
\newtheorem*{prop*}{Proposition}

\newtheorem{thm}[lemma]{Theorem}

\newtheorem*{thm*}{Theorem}
\newtheorem*{claim}{Claim}
\theoremstyle{mydefinition}
\newtheorem{rem}[lemma]{Remark}
\newtheorem*{rem*}{Remark}

\newtheorem*{notation*}{Notation}

\newtheorem*{warning*}{Warning}

\newtheorem*{defi*}{Definition}

\newtheorem*{question}{Question}

\numberwithin{equation}{section}

\newcommand{\bqn}{\begin{equation*}}
\newcommand{\eqn}{\end{equation*}}
\newcommand{\bq}{\begin{equation}}
\newcommand{\eq}{\end{equation}}
\newcommand{\ba}{\begin{aligned}}
\newcommand{\ea}{\end{aligned}}
\newcommand{\be}{\begin{enumerate}}
\newcommand{\ee}{\end{enumerate}}

\newcommand{\thismonth}{\ifcase\month 
  \or January\or February\or March\or April\or May\or June%
  \or July\or August\or September\or October\or November%
  \or December\fi}

%
%
%

\newcommand{\Stab}{\operatorname{Stab}}

\newcommand{\SL}{\operatorname{SL}}

\newcommand{\SO}{\operatorname{SO}}
\newcommand{\GL}{\operatorname{GL}}

\newcommand{\DD}{{\mathbb D}}

\newcommand{\HH}{{\mathbb H}}

\newcommand{\RR}{{\mathbb R}}
\newcommand{\TT}{{\mathbb T}}
\newcommand{\ZZ}{{\mathbb Z}}

\newcommand{\e}{\epsilon}

\def\h{{\rm H}}
\def\hb{{\rm H}_{\rm b}}

\def\hc{{\rm H}_{\rm c}}
\def\hcb{{\rm H}_{\rm cb}}

\def\one{\mathbf{1\kern-1.6mm 1}}

\def\id{{\it I\! d}}

\def\c{{\operatorname{c}}}

\def\h2{{\operatorname{H_2}}}
\def\h1{{\operatorname{H_1}}}

\def\id{{\operatorname{Id}}}

\def\Vol{{\operatorname{vol}}}
\def\Vol{{\operatorname{Vol}}}

\def\SL{\operatorname{SL}}

\def\SO{\operatorname{SO}}

\def\det{{\operatorname{det}}}

\def\hcb{{\rm H}_{\rm cb}}
\def\to{\rightarrow}

\def\hb{{\rm H}_{\rm b}}
\def\hc{{\rm H}_{\rm c}}
\def\h{{\rm H}}

\def\hhcb{\hat{\rm H}_{\rm cb}}
\def\hb{{\rm H}_{\rm b}}
\def\hc{{\rm H}_{\rm c}}
\def\h{{\rm H}}

\def\hhb{{\rm H}_{\rm B,b}}
\def\hhc{{\rm H}_{\rm B}}
\def\hhcb{{\rm H}_{\rm B,b}}

\renewcommand{\phi}{\varphi}

\def\No{N\raise4pt\hbox{\tiny o}\kern+.2em}
\def\no{n\raise4pt\hbox{\tiny o}\kern+.2em}

%
%
%
%

\newcommand{\rot}{\mathrm{Rot}}

\renewcommand{\O}{\textup{O}}
\newcommand{\rep}{\textup{Hom}}

\renewcommand{\hom}{\textup{Hom}}

\newcommand{\isom}{\mathrm{Isom}}
\newcommand{\isomp}{\mathrm{Isom}^+}

\newcommand\ep{\epsilon}
\newcommand{\Rep}{\RR_\epsilon}
\newcommand{\Zep}{\ZZ_\epsilon}
\newcommand{\omb}{\omega_{2m}^{\mathrm{b}}}
\newcommand{\ve}{\varepsilon}
\newcommand{\vemb}{\ve_{2m}^{\mathrm{b}}}

\newcommand{\vem}{\varepsilon_{2m}}
\newcommand{\vei}{\varepsilon_{(i)}}
\newcommand{\veib}{\varepsilon_{(i)}^{\mathrm{b}}}

\DeclareFontFamily{U}{matha}{\hyphenchar\font45}
\DeclareFontShape{U}{matha}{m}{n}{
      <5> <6> <7> <8> <9> <10> gen * matha
      <10.95> matha10 <12> <14.4> <17.28> <20.74> <24.88> matha12
      }{}
\DeclareSymbolFont{matha}{U}{matha}{m}{n}
\DeclareFontSubstitution{U}{matha}{m}{n}
\DeclareMathSymbol{\abxcup}{\mathbin}{matha}{'131}
%


\begin{document}

\title{Integrality of Volumes of Representations}

\author[M.~Bucher]{Michelle Bucher}
\address{Section de Math\'ematiques Universit\'e de Gen\`eve, 
2-4 rue du Li\`evre, Case postale 64, 1211 Gen\`eve 4, Suisse}
\email{Michelle.Bucher-Karlsson@unige.ch}

\author[M.~Burger]{Marc Burger}
\address{Department Mathematik, ETH Z\"urich, 
R\"amistrasse 101, CH-8092 Z\"urich, Switzerland}
\email{burger@math.ethz.ch}

\author[A.~Iozzi]{Alessandra Iozzi}
\address{Department Mathematik, ETH Z\"urich, 
R\"amistrasse 101, CH-8092 Z\"urich, Switzerland}
\email{iozzi@math.ethz.ch}

\thanks{Michelle Bucher was partially supported by Swiss National Science Foundation 
projects PP00P2-128309/1, 200020-178828/1 and 200021-169685, Alessandra Iozzi was partial supported by the 
Swiss National Science Foundation projects 2000021-127016/2 and 200020-144373
and Marc Burger was partially supported by the Swiss National Science Foundation project 200020-144373.  
The authors thank the Institute Mittag-Leffler in Djursholm, Sweden, 
and the Institute for Advances Studies in Princeton, NJ, for their warm hospitality during  the preparation of this paper.}


\date{\today}

\begin{abstract} Let $M$ be an oriented complete hyperbolic $n$-manifold of finite volume.
Using the definition of volume of a representation previously given by the authors in \cite{Bucher_Burger_Iozzi_Mostow}
we show that the volume of a representation $\rho\colon \pi_1(M)\to\isomp(\HH^n)$, properly normalized, 
takes integer values if $n$ is even and $\geq4$. 

If $M$ is not compact and $3$-dimensional, it is known that the volume is not locally constant.
In this case we give explicit examples of representations with volume as arbitrary as the volume
of hyperbolic manifolds obtained from $M$ via Dehn fillings.

\end{abstract}
\maketitle
%
%
%
%
\section{Introduction}\label{sec:intro}
Let $M$ be a connected oriented complete hyperbolic manifold of finite volume,
which we represent as the quotient $M=\Gamma\backslash\HH^n$ of real hyperbolic $n$-space $\HH^n$
by a torsion-free lattice $\Gamma<\isomp(\HH^n)$ in the group of orientation preserving isometries of $\HH^n$.

Given a representation $\rho\colon \Gamma\to\isomp(\HH^n)$, our central object of study is the volume $\Vol(\rho)$
of $\rho$ as defined in \cite{Burger_Iozzi_Wienhard_toledo} for $n=2$ and in general in \cite{Bucher_Burger_Iozzi_Mostow}.
This notion extends the classical one introduced in \cite{Goldman82} for $M$ compact
and, as it was shown in \cite{Kim_Kim_OnDeformation}, if $M$ is of finite volume it coincides with definitions introduced by other authors
\cite{Dunfield, Francaviglia, Kim_Kim_Volume}.

We refer to \S~\ref{sec:congruence} for the definition of $\Vol(\rho)$ and content ourselves with listing some of its main properties.

\be
\item The volume function is uniformly bounded on the representation variety $\rep(\Gamma,\isomp(\HH^n))$, that is
\bqn
|\Vol(\rho)|\leq\Vol(M)
\eqn
and 
\bqn
\Vol(\id_\Gamma)=\Vol(M)\,,
\eqn
where $\id_\Gamma\colon\Gamma\hookrightarrow\isomp(\HH^n)$
is the canonical injection.
\item (Rigidity)  There is equality
\bqn
\Vol(\rho)=\Vol(M)
\eqn
if and only if either:
\be
\item\label{eq:1} $n=2$ and $\rho$ is the holonomy representation of a (possibly infinite volume) complete hyperbolization 
of the smooth surface underlying $M$, \cite{Goldman_thesis, Burger_Iozzi_difff, Koziarz_Maubon}, or
\item\label{eq:2} $n=3$ and $\rho$ is conjugate to $\id_\Gamma$, \cite{Dunfield, Francaviglia_Klaff, Besson_Courtois_Gallot_comm, Bucher_Burger_Iozzi_Mostow}.
\ee
\item\label{eq:3} The volume function is continuous on $\rep(\Gamma,\isomp(\HH^n))$ (see Proposition~\ref{thm:cont} in Appendix~\ref{sec:continuity}).
\item\label{eq:4} If either $M$ is compact and $n\geq2$ \cite{Reznikov} (see also \cite{DLSW19})
or $M$ is finite volume and $n\geq4$ \cite{Kim_Kim_OnDeformation}, the volume is constant on connected components
of the representation variety.  As a consequence it takes only finitely many values.
\item\label{eq:5} If $M$ is a non-compact surface, the range of $\Vol$ coincides with the interval $[-\chi(M),\chi(M)]$,
where $\chi(M)$ is the Euler characteristic of $M$.
\item\label{eq:6} If $M$ is compact and $n$ is even, then
\bqn
\frac{2\Vol(\rho)}{\Vol(S^{n})}\in\ZZ\,,
\eqn
where here and in the sequel, $\Vol(S^{n})$ is the volume of the $n$-sphere $S^{n}$ of constant curvature 1.
\ee

Our main result is a generalization of the integrality property \eqref{eq:6}
to the case in which $M$ is not compact, and $n$ is even and $\geq4$. 
We remark that this is in sharp contrast with \eqref{eq:5}.

\begin{thm}\label{thm: main thm} 
Let $\Gamma < \mathrm{Isom}^+(\mathbb{H}^{2m})$ be a torsion-free lattice and 
let $\rho\colon \Gamma \rightarrow \mathrm{Isom}^+(\mathbb{H}^{2m})$ be a representation. 
Assume that $2m\geq4$.
\begin{enumerate}
\item If the manifold $M=\Gamma\setminus \mathbb{H}^{2m}$ has only toric cusps, then
\bqn
\frac{2 \Vol(\rho)}{\Vol(S^{2m})}\in \ZZ\,.
\eqn
\item In general 
\bqn
\frac{2 \Vol(\rho)}{\Vol(S^{2m})}\in \frac{1}{B_{2m-1}}\cdot \ZZ\,,
\eqn
where $B_{2m-1}$ is the Bieberbach number in dimension $2m-1$.
\end{enumerate}
\end{thm}
We recall that the Bieberbach number is the smallest integer $B_d$ such that any compact flat $d$-manifold
has a covering of degree $B_d$ that is a torus.
Such $d$-manifolds occur as connected components of the boundary of a compact core.
Recall in fact that, in the context of hyperbolic geometry a compact core $N$ of $M$ is a compact submanifold
that is obtained as the quotient by $\Gamma$ of the complement in $\HH^{d+1}$ of a $\Gamma$-invariant family
of pairwise disjoint open horoballs centered at the cusps.  

The strategy of the proof of Theorem~\ref{thm: main thm} builds on results in \cite{Burger_Iozzi_Wienhard_toledo},
where the authors studied the case in which $\dim M=2$ and established congruence relations for $\Vol(\rho)$.
In order to implement this strategy, we show that the continuous bounded class
\bqn
\omega_{2m}^{\mathrm{b}}\in\hcb^{2m}(\SO(2m,1)^\circ,\RR)\,,
\eqn
defined by the volume form on $\HH^{2m}$,
has a canonical representative
\bqn
\varepsilon_{2m}^{\mathrm{B,b}}\in\hb^{2m}(\SO(2m,1)^\circ,\ZZ)
\eqn
in the bounded Borel cohomology of $\SO(2m,1)^\circ$ that, under the change of coefficients $\ZZ\to\RR$, 
corresponds to $(-1)^m\frac{2}{\Vol(S^{2m})}\omega_{2m}^{\mathrm{b}}$.
For $2m=2$, $\varepsilon_{2}^{\mathrm{b}}$ coincides with the classical bounded Euler class as defined in \cite{Ghys}.
Then we establish a congruence relation modulo $\ZZ$ for $(-1)^m\frac{2}{\Vol(S^{2m})}\Vol(\rho)$ in terms of invariants attached
to the boundary components of $N$ that are now assumed to be $(2m-1)$-tori.
If $T_i$ is a component of $\partial N$ and $\rho_i\colon\ZZ^{2m-1}\to\SO(2m,1)^\circ$
is the restriction of $\rho$ to $\pi_1(T_i)\simeq\ZZ^{2m-1}$,
then the invariant attached to $T_i$ is
\bqn
\rho_i^\ast(\varepsilon_{2m}^{\mathrm{b}})\in\hb^{2m}(\ZZ^{2m-1},\ZZ)\simeq\RR/\ZZ\,.
\eqn
In the case in which $m=1$, $\rho_i^\ast(\varepsilon_2^{\mathrm{b}})$ 
coincides with the negative of the rotation number of $\rho_i^\ast(1)\in\SO(2,1)^\circ$
and we show in \S~\ref{sec:vanishing} that, if $m\geq2$, $\rho_i^\ast(\varepsilon_{2m}^{\mathrm{b}})$ always vanishes.

\begin{rem*}
In general $\Gamma$ has always a subgroup of finite index all whose cusps are toric.
However little is known about which collections of compact flat $(2m-1)$-manifolds $N$ are the components of the boundary of a compact core as above.
If $\dim M=4$, it is known that there are compact flat $3$-manifolds that are not diffeomorphic to $\partial N$, \cite[Corollary~1.4]{Long_Reid_00}, 
while on the positive side there are hyperbolic $4$-manifolds with $\partial N$ that is a $3$-torus, \cite{Kolpakov_Martelli}.
Which leads to the following:
\end{rem*}

\begin{question}  If $\Lambda$ is the fundamental group of a compact flat $(2m-1)$-manifold
and $\rho\colon\Lambda\to\SO(2m,1)^\circ$ is any representation,
does $\rho^\ast(\varepsilon_{2m}^{\mathrm{b}})$ vanish for $2m\geq4$?
\end{question}

Thus it is really in all odd dimensions that the nature of the values of $\Vol$ remain mysterious, though,
according to the results in \cite{Kim_Kim_OnDeformation}, for $n\geq4$ there are only finitely many possibilities.

We end this introduction by giving a result in dimension $3$.
In this case, the character variety of $\Gamma<\isomp(\HH^3)$ is smooth near $\id_\Gamma$,
and its complex dimension near $\id_\Gamma$ equals the number $h$ of cusps of $M$ \cite{Thurston_notes}.
As a consequence of the volume rigidity theorem and the continuity of $\Vol$, 
the image of $\Vol$ contains at least an interval $[\Vol(M)-\ep,\Vol(M)]$
for some $\ep>0$.
Special points in the character variety of $\Gamma$ come from Dehn fillings of $M$.
Let $M_\tau$ denote the compact manifold obtained from $M$ by Dehn surgery along 
a choice of $h$ simple closed loops $\tau=\{\tau_1,\dots,\tau_h\}$.
If the length of each geodesic loop $\tau_j$ is larger than $2\pi$,
$M_\tau$ admits a hyperbolic structure \cite{Thurston_notes} and an analytic formula for
$\Vol(M_\tau)$ depending on the length of the $\tau_j$ has been given in \cite{Neumann_Zagier}.

\begin{prop}\label{prop:intro}  Let $M_\tau$ be the compact $3$-manifold obtained by Dehn filling
from the hyperbolic $3$-manifold $M$.  If $\rho_\tau\colon \pi_1(M)\to\isomp(\HH^3)$
is the representation obtained from the composition of the quotient homomorphism
$\pi_1(M)\to\pi_1(M_\tau)$ 
with the holonomy representation of the hyperbolic structure on $M_\tau$, then
\bqn
\Vol(\rho_\tau)=\Vol(M_\tau)\,.
\eqn
\end{prop}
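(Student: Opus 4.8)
The plan is to reduce the computation of $\Vol(\rho_\tau)$, which is defined through the relative bounded cohomology of a compact core of $M$, to the evaluation of the bounded volume class of the \emph{closed} hyperbolic manifold $M_\tau$ against its fundamental class, the latter being classically equal to $\vol(M_\tau)$.

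First I would unwind the definition. Let $N\subset M$ be a compact core, so that $N$ is a compact $3$-manifold whose boundary is the disjoint union of the cusp tori and $\pi_1(N)\cong\pi_1(M)$. The Dehn filling $M_\tau$ is obtained from $N$ by gluing solid tori along $\partial N$; writing $V=V_1\cup\dots\cup V_h$ for their union, we have $M_\tau=N\cup V$ with $N\cap V=\partial N$, and the inclusion $f\colon N\hookrightarrow M_\tau$ induces on fundamental groups exactly the quotient homomorphism $q\colon\pi_1(M)=\pi_1(N)\to\pi_1(M_\tau)$ (van Kampen, the meridians being killed). Denoting by $h_\tau\colon\pi_1(M_\tau)\to\isomp(\HH^3)$ the holonomy of $M_\tau$ we have $\rho_\tau=h_\tau\circ q$, whence on bounded cohomology
\[
\rho_\tau^*(\omega_3^{\mathrm b})=f^*\big(h_\tau^*(\omega_3^{\mathrm b})\big)=f^*(\omega_{M_\tau}^{\mathrm b}),
\]
where $\omega_{M_\tau}^{\mathrm b}:=h_\tau^*(\omega_3^{\mathrm b})\in\hb^3(M_\tau,\RR)$ is the bounded volume class of the closed hyperbolic manifold $M_\tau$.

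Next I would set up the relative picture on both sides and relate them by naturality. Since each filling solid torus has amenable fundamental group $\ZZ$, the isometric isomorphism of \cite{BBFIPP, Kim_Kuessner} applies not only to the pair $(N,\partial N)$, giving $j_N\colon\hb^3(N,\partial N)\xrightarrow{\sim}\hb^3(N)$, but equally to the pair $(M_\tau,V)$, giving $j_{M_\tau}\colon\hb^3(M_\tau,V)\xrightarrow{\sim}\hb^3(M_\tau)$. As $f$ is a map of pairs $(N,\partial N)\to(M_\tau,V)$, the comparison maps $j_N,j_{M_\tau}$ are natural with respect to the two pullbacks $f^*$, so that $j_N^{-1}f^*(\omega_{M_\tau}^{\mathrm b})=f^*\big(j_{M_\tau}^{-1}(\omega_{M_\tau}^{\mathrm b})\big)$. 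On the homological side, excision gives $\h_3(N,\partial N)\cong\h_3(M_\tau,V)$ with $f_*[N,\partial N]=[M_\tau,V]$, while the long exact sequence of $(M_\tau,V)$ (using $\h_3(V)=\h_2(V)=0$ for a disjoint union of solid tori) identifies $\h_3(M_\tau)\cong\h_3(M_\tau,V)$, sending $[M_\tau]\mapsto[M_\tau,V]$.

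Combining these facts with the adjunctions $\langle f^*\beta,c\rangle=\langle\beta,f_*c\rangle$ and $\langle j_{M_\tau}^{-1}(\alpha),[M_\tau,V]\rangle=\langle\alpha,[M_\tau]\rangle$, I would chase the evaluation
\[
\Vol(\rho_\tau)=\big\langle j_N^{-1}f^*(\omega_{M_\tau}^{\mathrm b}),\,[N,\partial N]\big\rangle
=\big\langle j_{M_\tau}^{-1}(\omega_{M_\tau}^{\mathrm b}),\,[M_\tau,V]\big\rangle
=\big\langle\omega_{M_\tau}^{\mathrm b},\,[M_\tau]\big\rangle=\vol(M_\tau),
\]
the last equality being the classical fact that the bounded volume class of a closed hyperbolic manifold evaluates to its volume on the fundamental class (it maps to the ordinary volume class under the comparison to unbounded cohomology). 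The main point requiring care is the naturality in the middle: one must verify that the isometric isomorphism $j$ of \cite{BBFIPP, Kim_Kuessner} is compatible with the map of pairs $f$, and that its amenability hypothesis genuinely applies to the filling tori, whose fundamental groups $\ZZ$ map to cyclic, hence amenable, subgroups of $\pi_1(M_\tau)$. One should finally fix orientations so that the Dehn filling and $h_\tau$ endow $M_\tau$ with the orientation for which $[M_\tau]$ is the positive fundamental class, guaranteeing the sign $+1$ in the final identity.
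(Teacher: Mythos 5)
Your proof is correct, but it takes a genuinely different route from the paper's. The paper deduces the statement from a more general fact (Proposition~\ref{prop: vol(rho)=vol(rho0)}: $\Vol(\rho\circ f_\tau)=\Vol(\rho)$ for \emph{every} representation $\rho$ of $\pi_1(M_\tau)$), and its proof drops early into \emph{ordinary} cohomology: the relative bounded class computing $\Vol(\rho_\tau)$ is pushed through the comparison map into $\h^3(N,\partial N,\RR)$, and the heart of the argument is an isomorphism $F\colon\h^3(M_\tau)\to\h^3(N,\partial N)$ built from the intermediate pair $(M_\tau,\DD^2\times S^1)$ and the two inclusions, whose compatibility with (duals of) fundamental classes is proved by exhibiting compatible triangulations of $N$, $\DD^2\times S^1$ and $M_\tau$ (Lemma~\ref{lem: F isom}), followed by a naturality check for a large diagram (Lemma~\ref{lem: F commute}). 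You instead never leave bounded cohomology: your key input is that the filling solid tori $V$ have amenable fundamental group, so that $\hb^3(M_\tau,V)\to\hb^3(M_\tau)$ is an isomorphism, combined with naturality of these comparison maps under the map of pairs $f\colon(N,\partial N)\to(M_\tau,V)$, the excision identification $f_*[N,\partial N]=[M_\tau,V]$, and the pairing adjunctions. Your route buys a shorter argument with no triangulation bookkeeping, makes transparent \emph{why} the definition of $\Vol$ transports along fillings (the glued-in pieces are themselves amenable, i.e.\ exactly the kind of piece the definition already ignores), and, if the final classical evaluation $\langle\omega_{M_\tau}^{\mathrm b},[M_\tau]\rangle=\vol(M_\tau)$ is replaced by the definition of the volume of a representation of a closed manifold group, it reproves Proposition~\ref{prop: vol(rho)=vol(rho0)} in full generality. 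The paper's route, by contrast, confines the bounded-cohomology machinery to the cusped side, where it is unavoidable, and performs all identifications in classical singular (co)homology. Two points you should make explicit in a final write-up: the results of \cite{BBFIPP,Kim_Kuessner} are invoked in the paper for pairs (compact manifold, boundary), so for the pair $(M_\tau,V)$ you should either cite a version valid for amenable subspaces or simply run the long exact sequence of the pair in bounded cohomology together with the vanishing $\hb^n(V)=0$ for $n\geq1$; and the equality $f_*[N,\partial N]=[M_\tau,V]$ deserves a one-line justification (comparison of local fundamental classes at a point of the interior of $N$, with the orientation convention you state, or a triangulation argument as in the paper).
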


Thus with our cohomological definition, $\Vol(\rho)$ gives a continuous interpolation 
between the special values $\Vol(M_\tau)$.  A natural question here is whether $\Vol$ is real analytic.

\bigskip
The structure of the paper is as follows.  In \S~\ref{sec:0} we summarize the main facts about various  group cohomology
theories used in this paper.  In \S~\ref{sec:euler} we define a Borel cohomology class
$\varepsilon_{2m}\in\h_\mathrm{B}^{2m}(\SO(2m,1),\ZZ_\epsilon)$ with coefficients (see \S~\ref{sec:0})
and relate it to an explicit multiple of the class
$\omega_{2m}\in\h_\mathrm{B}^{2m}(\SO(2m,1),\RR_\epsilon)$ defined by the volume form on $\HH^{2m}$ (see \eqref{eq:vanEst+}).
In \S~\ref{sec:bddEuler}  we first show that $\varepsilon_{2m}$ has a unique bounded representative
$\varepsilon_{2m}^\mathrm{b}\in\h_{\mathrm{B,b}}^{2m}(\SO(2m,1),\ZZ_\epsilon)$ (Proposition~\ref{prop:bdd-euler});
in \S~\ref{sec:congruence} we proceed to define the volume $\Vol(\rho)$ of a representation
$\rho\colon\pi_1(M)\to\isomp(\HH^{2m})=\SO(2m,1)^\circ$
and use the bounded integral class $\varepsilon_{2m}^\mathrm{b}$ to establish a congruence relation for $\Vol(\rho)$ (Theorem~\ref{thm: vol modulo boundary}).
In \S~\ref{sec:vanishing}, which is the core of the paper, we show that the contributions from the various boundary components
of a compact core of $M$ to the congruence relation all vanish for $2m\geq4$.
In \S~\ref{sec:examples} we relate the volume of the representations of $\pi_1(M)$ obtained by Dehn surgery
to the volumes of the corresponding manifolds.  In the Appendix we prove the continuity of the map
$\rho\mapsto\Vol(\rho)$.

\section{Various cohomology theories}\label{sec:0}
We collect in this section cohomological results that will be used throughout the paper.

Given a locally compact second countable group $G$, we consider $\ZZ$, $\RR$ and $\RR/\ZZ$
as trivial modules.  If $\epsilon\colon G\to\{-1,+1\}$ is a continuous homomorphism,
we denote by $\ZZ_\epsilon$ and $\RR_\epsilon$ the corresponding coefficient $G$-modules,
where $g_\ast t=\epsilon(g) t$ for $t\in\ZZ, \RR$ and 
by $\RR_\epsilon/\ZZ_\epsilon$ the corresponding quotient module.

\medskip
If $A$ is any of the above $G$-modules, $\h_\mathrm{B}^\bullet (G,A)$ denotes the cohomology
of the complex of Borel measurable $A$-valued cochains on $G$.  
If $A=\RR,\RR_\epsilon$, we will also need the continuous cohomology $\hc^\bullet(G,A)$
with coefficients in $A$ and we will use that for $A=\RR,\RR_\epsilon$ the comparison map
\bq\label{eq:0.1}
\xymatrix{
\hc^\bullet(G,A)\ar[r]^-\simeq
&\h_\mathrm{B}^\bullet(G,A)
}
\eq
is an isomorphism \cite[Theorem~A]{Austin_Moore}.

\medskip
To compute the continuous cohomology of $G$ we use repeatedly that
if $G\times V\to V$ is a proper smooth action of a Lie group $G$ on a smooth manifold $V$,
there is an isomorphism
\bq\label{eq:0.2}
\hc^\bullet(G,\RR)\simeq\h^\bullet(\Omega^\bullet(V)^G)
\eq
with the cohomology of the complex $\Omega^\bullet(V)^G$ of $G$-invariant differential forms on $V$,
\cite[Theorem~6.1]{Hochschild_Mostow}.
If $V$ is a symmetric space $G/K$, then
\bq\label{eq:0.25}
\hc^\bullet(G,\RR)\simeq\Omega^\bullet(V)^G\,,
\eq
since every $G$-invariant differential form on $G/K$ is closed.

\medskip
Another result we will use is Wigner's isomorphism \cite{Wigner},
or rather a special case thereof \cite[Theorem~E]{Austin_Moore}:
namely if $A=\ZZ$ or $A=\ZZ_\epsilon$, there is  a natural isomorphism
\bq\label{eq:0.3}
\h_\mathrm{B}^\bullet(G,A)\simeq\h_\mathrm{sing}^\bullet(BG,A)\,,
\eq
where $BG$ is the classifying space of $G$ and $\h_\mathrm{sing}^\bullet$ refers to singular cohomology.

\medskip
A vanishing theorem that is often used states that if $L$ is compact, then
\bq\label{eq:0.4}
\hc^k(L,\RR)=0\quad\text{ and }\quad\hc^k(L,\RR_\epsilon)=0
\eq
for all $k\geq1$.

\medskip
Turning to bounded cohomology, $\h_\mathrm{B,b}^\bullet(G,A)$ denotes the cohomology of bounded $A$-valued
Borel cochains on $G$.  An important point is that if $A=\RR$ or $A=\RR_\epsilon$,
the comparison map from continuous bounded to Borel bounded cohomology induces an isomorphism
\bq\label{eq:0.5}
\xymatrix{
\hcb^\bullet(G,A)\ar[r]^-\simeq
&\h_\mathrm{B,b}^\bullet (G,A)
}
\eq
as can be readily verified using the regularization operators defined in \cite[\S~4]{Blanc}.

\medskip
Analogously to the vanishing of the continuous cohomology for compact groups,
if $P$ is amenable, then
\bq\label{eq:06}
\hcb^k(P,\RR)\cong \h^k_\mathrm{B,b}(P,\RR)=0
\eq
for all $k\geq1$.

\medskip
Consider now the two short exact sequence of coefficients

\bq\label{eq:0.8-}
\xymatrix@1{
0\ar[r]
&\ZZ\,\,\ar@{^{(}->}[r]
&\RR\ar@{->>}[r]
&\RR/\ZZ\ar[r]
&0\, ,
}
\eq
\bqn
\xymatrix@1{
0\ar[r]
&\Zep\ar@{^{(}->}[r]
&\Rep\ar@{->>}[r]
&\Rep/\Zep\ar[r]
&0\, .
}
\eqn
Using that $\RR\to \RR/\ZZ $ admits a bounded Borel section, 
one obtains readily, both for the trivial and nontrivial modules, long exact sequences in Borel and bounded Borel cohomology
with commutative squares coming from the comparison maps $c_{\ZZ}$ and $\c_{\RR}$ between these two cohomology theories:

 {\small
 \bq \label{eqn:0.8}
 \xymatrix{
 \dots\ar[r]
&\h_\mathrm{B}^{2m-1}(G,\RR/\ZZ)\ar[r]^-{\delta^{\mathrm{b}}}\ar@{=}[d]
&\h_\mathrm{B,b}^{2m}(G,\ZZ)\ar[r]\ar[d]^{c_{\ZZ}}
&\h_\mathrm{B,b}^{2m}(G,\RR)\ar[r]\ar[d]^{c_{\RR}}
&\h_\mathrm{B}^{2m}(G,\RR/\ZZ)\ar[r]\ar@{=}[d]
&\dots\\
\dots\ar[r]
&\h_\mathrm{B}^{2m-1}(G,\RR/\ZZ)\ar[r]_-\delta
&\h_\mathrm{B}^{2m}(G,\ZZ)\ar[r]
&\h_\mathrm{B}^{2m}(G,\RR)\ar[r]
&\h_\mathrm{B}^{2m}(G,\RR/\ZZ)\ar[r]
&\dots\,,
 }
 \eq}
 and
 {\small
 \bq \label{eq:0.7}
 \xymatrix{
 \dots\ar[r]
&\h_\mathrm{B}^{2m-1}(G,\Rep/\Zep)\ar[r]^-{\delta^{\mathrm{b}}}\ar@{=}[d]
&\h_\mathrm{B,b}^{2m}(G,\Zep)\ar[r]\ar[d]^{c_{\ZZ}}
&\h_\mathrm{B,b}^{2m}(G,\Rep)\ar[r]\ar[d]^{c_{\RR}}
&\h_\mathrm{B}^{2m}(G,\Rep/\Zep)\ar[r]\ar@{=}[d]
&\dots\\
\dots\ar[r]
&\h_\mathrm{B}^{2m-1}(G,\Rep/\Zep)\ar[r]_-\delta
&\h_\mathrm{B}^{2m}(G,\Zep)\ar[r]
&\h_\mathrm{B}^{2m}(G,\Rep)\ar[r]
&\h_\mathrm{B}^{2m}(G,\Rep/\Zep)\ar[r]
&\dots\,,
 }
 \eq}
 where $\delta$ and $\delta^\mathrm{b}$ are the connecting homomorphisms.

\section{Proportionality between volume and Euler class}\label{sec:euler}\label{sec:newsectionProp}

Let $\isom(\HH^n)$ be the full group of isometries of real hyperbolic spaces $\HH^n$,
let $\ep\colon \isom(\HH^n)\to\{-1,1\}$ denote the homomorphism with kernel $\isomp(\HH^n)$
and let $\Zep\subset\Rep$ the corresponding modules.  
Using \eqref{eq:0.1}, \cite[Proposition~2.1]{Bucher_Burger_Iozzi_Mostow} and \eqref{eq:0.25},
we have isomorphisms
\bq\label{eq:vanEst+}
\h_\mathrm{B}^n(\isom(\HH^n),\Rep)
\simeq \hc^n(\isom(\HH^n),\Rep)
\simeq \hc^n(\isom(\HH^n)^\circ,\RR)
\simeq\Omega(\HH^n)^{\isom(\HH^n)^\circ}	
\eq
and denote by $\omega_n\in\h_\mathrm{B}^n(\isom(\HH^n),\Rep)$ the generator corresponding to the volume form on $\HH^n$.

If $n=2m$ is even, we can identify $\isom(\HH^{2m})$ with $\SO(2m,1)$.  
The diagram of injections
\bqn\label{equ:inclusions}
\xymatrix{
&\mathrm{O}(2m)\ar@{^{(}->}[dr]\ar@{_{(}->}[dl]& \\ 
\mathrm{GL}(2m,\RR) &  & \mathrm{SO}(2m,1)}
\eqn
realizes $\O(2m)$ as a maximal compact subgroup of both $\SO(2m,1)$ and $\GL(2m,\RR)$
and induces homotopy equivalences
\bqn
B\GL(2m,\RR)\simeq B\O(2m)\simeq B\SO(2m,1)\,.
\eqn
The homomorphism
\bqn
\GL(2m,\RR)\to\{-1,1\}
\eqn
associating the sign of the determinant, coincides on $\O(2m)$ with the restriction of $\epsilon$ 
and will be denoted by $\epsilon$.  Thus we obtain isomorphisms
\bq\label{eq:2.1}
\h_\mathrm{sing}^{2m}(B\GL(2m,\RR),\Zep)
\cong \h_\mathrm{sing}^{2m}(B\O(2m),\Zep)
\cong \h_\mathrm{sing}^{2m}(B\SO(2m,1),\Zep)\,.
\eq


The (universal) Euler class $\vem^\mathrm{univ}\in \h^{2m}_\mathrm{sing}(B\mathrm{GL}^+(2m,\RR),\ZZ)$ 
is a singular class in the integral cohomology of the classifying space $B\mathrm{GL}^+(2m,\RR)$ of oriented $\RR^{2m}$-vector bundles
 (see \cite[\S~9]{Milnor_Stasheff}). It is the obstruction to the existence of a nowhere vanishing section. 
 As it changes sign when the orientation is reversed, it extends to a class $\vem^\mathrm{univ}\in \h^{2m}_\mathrm{sing}(B\mathrm{GL}(2m,\RR),\Zep)$. 
 Furthermore, if $M$ is a closed oriented $2m$-dimensional manifold, 
 its tangent bundle is classified by a (unique up to homotopy) classifying map 
 \bqn
 f\colon M\rightarrow B \mathrm{GL}^+(2m,\RR) \hookrightarrow B\mathrm{GL}(2m,\RR)\simeq B\mathrm{O}(2m)
 \eqn 
 inducing a map 
 \bqn
 f^*\colon \h^{2m}_\mathrm{sing}(B\mathrm{GL}(2m,\RR),\Zep)\cong\h^{2m}_\mathrm{sing}(B\mathrm{O}(2m)\RR,\Zep) \rightarrow \h^{2m}_\mathrm{sing}(M,\ZZ)
 \eqn 
 and thus
\bq \label{equ:EulerChar}
\chi(M)=\langle f^*(\vem^\mathrm{univ}),[M]\rangle\,.
\eq
Now we use Wigner's isomorphism \eqref{eq:0.3}
\bq\label{eq:2.3}
\h_\mathrm{sing}^{2m}(B\SO(2m,1),\Zep)\cong \h_\mathrm{B}^{2m}(\SO(2m,1),\Zep)
\eq
and call Euler class the group cohomology class
\bqn
\varepsilon_{2m}\in\h_\mathrm{B}^{2m}(\SO(2m,1),\Zep)
\eqn
corresponding to $\varepsilon_{2m}^\mathrm{univ}$ under the composition of the isomorphisms
in \eqref{eq:2.1} and \eqref{eq:2.3}.


Since $\h^{2m}_\mathrm{B}(\mathrm{SO}(2m,1),\Rep)$ is one dimensional by \eqref{eq:2.1}, 
the image of $\vem$ under the change of coefficients $\Zep\hookrightarrow \Rep$ is a multiple of the volume class $\omega_{2m}$. 
We show now that this multiple is given by 
\bq \label{eqn:Hirzebruch}
\ba
\hhc^{2m}(\mathrm{SO}(2m,1),\Zep)&\longrightarrow\h_\mathrm{B}^{2m}(\mathrm{SO}(2m,1),\Rep)\\
\vem\quad\qquad&\longmapsto(-1)^m\frac{2}{\Vol(S^{2m})}\omega_{2m}.
\ea
\eq
Indeed, let $M$ be a closed oriented hyperbolic manifold. 
The lattice embedding $i\colon \pi_1(M)\hookrightarrow \mathrm{SO}(2m,1)$ induces classifying maps
\bq \label{eq:ugly arrow}
\xymatrix{ 
M=K(\pi_1(M),1) \ar[rr]^-{Bi} \ar@/_2.5pc/[rr]^{\overline{Bi}}
&&B\mathrm{SO}(2m,1)\simeq B\mathrm{O}(2m)\simeq 
B\mathrm{GL}(2m,\RR) .  
}
\eq
Set $\Gamma=i(\pi_1(M))$. The orthonormal frame bundle over $M$ is naturally identified with
\bqn
\Gamma\setminus \mathrm{SO}(2m,1)\, .
\eqn
Extending the principal group structure from $\mathrm{O}(2m)$ to $\mathrm{SO}(2m,1)$ gives the principal $\mathrm{SO}(2m,1)$-bundle
\bqn 
(\Gamma \setminus \mathrm{SO}(2m,1))\times_{\mathrm{O}(2m)} \mathrm{SO}(2m,1)\, .
\eqn
The latter is isomorphic to 
\bqn
(\mathrm{SO}(2m,1)/ \mathrm{O}(2m))\times_\Gamma \mathrm{SO}(2m,1)\, , 
\eqn
which is the flat principal $\mathrm{SO}(2m,1)$-bundle associated to the lattice embedding $i\colon \pi_1(M)\hookrightarrow \mathrm{SO}(2m,1)$. It follows that the composition from (\ref{eq:ugly arrow})
\bqn
\overline{Bi}:M\longrightarrow B\mathrm{O}(2m)\simeq B\mathrm{GL}(2m,\RR)
\eqn
classifies the tangent bundle $TM$. As a consequence, 
\bqn
\chi(M)=\langle \overline{Bi}^*(\vem^\mathrm{univ}),[M]\rangle\,.
\eqn
Thus, by the naturality of Wigner's isomorphism, the following diagram commutes:
\bqn
\xymatrix{ 
  & \h^{2m}_\mathrm{sing}(B\mathrm{GL}(2m,\RR),\Zep)  \ar[ld]_-{\overline{Bi}}\ar@{=}[d]\\
  \h^{2m}_\mathrm{sing}(M,\ZZ)   &    \h^{2m}_\mathrm{sing}(B\mathrm{SO}(2m,1),\Zep)  \ar[l]^-{Bi}\\
   \h^{2m}(\pi_1(M),\ZZ)  \ar[u]^\cong &    \h^{2m}_\mathrm{B}(\mathrm{SO}(2m,1),\Zep).  \ar@{=}[u]\ar[l]^{i^*}
 }
\eqn
We deduce that 
\bqn
\chi(M)=\langle i^*\vem,[M]\rangle\,.
\eqn
Moreover, the hyperbolic volume of $M$ is obviously given by 
\bqn
\mathrm{vol}(M)=\langle i^*\omega_{2m},[M]\rangle\,.
\eqn

The Chern--Gauss--Bonnet Theorem \cite[Chapter~13, Theorem~26]{Spivak} implies that 
if $M$ is a hyperbolic $(2m)$-dimensional manifolds
then
\bqn
\chi(M)=(-1)^m\frac{2}{\Vol(S^{2m})}\mathrm{vol}(M)\,,
\eqn
where the proportionality constant is up to the sign $(-1)^m$ the corresponding quotient between the Euler characteristic 
and the volume of the compact dual of hyperbolic space, namely the $(2m)$-sphere of constant curvature $+1$. 
Finally, since the lattice embedding induces an injection 
\bqn
i^*\colon \h_\mathrm{B}^{2m}(\mathrm{SO}(2m,1),\Rep)\longrightarrow  \h^{2m}(\pi_1(M),\RR)\,,
\eqn
we obatin the value of the proportionality constant in (\ref{eqn:Hirzebruch}).

\begin{rem}\label{rem:3.1} Recall that the orientation cocycle 
\bq \mathrm{Or}\colon (S^1)^3\longrightarrow \{+1,0,-1\}
\eq
assigns the value $\pm 1$ to distinct triple of points according to their orientation, and $0$ otherwise. 
Identifying $S^1$ with $\partial \HH^2$ we obtain by evaluation a Borel cocycle and a cohomology class
\bqn
[\mathrm{Or}]\in \h_\mathrm{B}^2(\isom(\HH^2),\Rep)\,.
\eqn
Since the area of ideal hyperbolic triangles in $\HH^2$ is $\pm \pi$ depending on the orientation, 
this cocycle represents $(1/\pi)\omega_2=-2\varepsilon_2$, 
where the last equality comes from (\ref{eqn:Hirzebruch}). 
Thus the Euler class $\varepsilon_2$ is represented by $-\frac12\mathrm{Or}$,
\cite[Lemma~2.1]{Iozzi_ern} and \cite[Proposition~8.4]{Bucher_Monod}.
\end{rem}

\section{The bounded Euler class}\label{sec:bddEuler} 

In this section we recall that the volume class of hyperbolic $n$-space admits a unique bounded representative 
and establish for $n$ even the analogous result for the Euler class, 
which leads to the definition of the bounded Euler class  in Proposition~\ref{prop:bdd-euler}. 
In \S~\ref{sec:congruence} we define the volume of a representation 
and use the existence of the bounded Euler class in even dimensions 
to prove in Theorem \ref{thm: vol modulo boundary} a congruence relation for the volume of a representation. 

\subsection{Bounded volume and Euler classes}


\begin{lemma}\label{lem:comp}  Let $G:=\isom(\HH^n)$. In the commuting diagram
 {
 \bqn
 \xymatrix{
\hhcb^{n}(G,\Zep)\ar[r]\ar[d]_{c_\ZZ}
&\hhcb^{n}(G,\Rep)\ar[d]^{c_\RR}
\\
\hhc^{n}(G,\Zep)\ar[r]
&\hhc^{n}(G,\Rep)
 }
 \eqn}
the vertical maps are isomorphisms\,.
\end{lemma}

\begin{proof} 
The fact that $c_\RR$ is an isomorphism follows from \cite[Proposition~2.1]{Bucher_Burger_Iozzi_Mostow} 
and the identifications between the (bounded) Borel and the (bounded) continuous cohomology.

To show that $c_\ZZ$ is an isomorphism, we will do diagram chases in  (\ref{eq:0.7}).

\medskip
\noindent
{\em Surjectivity of $c_\ZZ$:}   Let $\alpha\in \hhc^{n}(G,\Zep)$. Denote by $\alpha_\RR \in \hhc^{n}(G,\Rep)$ 
its image under the change of coefficients and $\alpha_\RR^{\mathrm b}=(c_\RR)^{-1}(\alpha_\RR)\in \hhcb^{n}(G,\Rep)$. 
By exactness of the lines in (\ref{eq:0.7}), the image of $\alpha_\RR$ in $\hhc^{n}(G,\Rep/\Zep)$ vanishes. 
And thus the same holds for the image of $\alpha_\RR^{\mathrm b}$. 
Thus there is $\beta\in \h_\mathrm{B}^{n}(G,\Zep)$ with image $\alpha_\RR^{\mathrm b}$. But $\c_\ZZ(\beta)-\alpha$ goes to zero in 
 $\hhc^{n}(G,\Rep)$, hence $\c_\ZZ(\beta)-\alpha=\delta(\eta)$ for some 
 $\eta\in\hhc^{n-1}(G,\Rep/\Zep)$.  Thus $c_\ZZ(\beta+\delta^{\mathrm{b}}(\eta))=\alpha$.

\medskip
\noindent
{\em Injectivity of  $c_\ZZ$:}  Observe that $\h_\mathrm{B}^{n-1}(G,\Rep)=\hc^{n-1}(G,\Rep)=0$.  Indeed this group injects into
$\hc^{n-1}(G^\circ,\RR)$ by restriction, and the latter vanishes, taking into account \eqref{eq:0.2},
since there are no $\SO(n)$-invariant $(n-1)$-forms on $\RR^n$ 
and hence no $G^\circ$-invariant differential $(n-1)$-forms on $\HH^{n}$.

Let now $\alpha\in\hhcb^{n}(G,\Zep)$ with $c_\ZZ(\alpha)=0$.  
Since $c_\RR$ is an isomorphism,
we have that the image $\alpha_\RR\in\h_\mathrm{B,b}^{n}(G,\Rep)$ vanishes.
Hence there is $\beta\in\h_\mathrm{B}^{n-1}(G,\Rep/\Zep)$ with $\delta^{\mathrm{b}}(\beta)=\alpha$.
But then $\delta(\beta)=c_\ZZ(\alpha)=0$.  
By exactness, this implies that $\beta$ is in the image of $\h_\mathrm{B}^{n-1}(G,\Rep)$. Since $\h_\mathrm{B}^{n-1}(G,\Rep)=0$,
then $\beta=0$, which finally implies  that $\alpha=0$.
\end{proof}

As a consequence of the fact that $c_\RR$ is an isomorphism and that the volume of geodesic simplices in hyperbolic $n$-space is bounded, 
the volume cocycle defines also a bounded Borel class 
\bqn
\omega_n^{\mathrm{b}}\in\h_\mathrm{B,b}^n(G,\Rep)
\eqn
corresponding to the volume class $\omega_n$.

As an immediate consequence of Lemma \ref{lem:comp} and the correspondence (\ref{eqn:Hirzebruch}) we obtain: 

\begin{prop}\label{prop:bdd-euler}  Let $G=\isom(\HH^{2m})$.  
The Euler class $\vem\in\hhc^{2m}(G,\Zep)$
has a bounded representative $\vemb\in\hhcb^{2m}(G,\Zep)$ that 
has the following properties:
\be
\item it is unique, and
\item under the change of coefficients $\Zep\to\Rep$ it corresponds to 
\bqn
(-1)^m\frac{2}{\Vol(S^{2m})}\omb\in\hhcb^{2m}(G,\Rep)\,.
\eqn
\ee
\end{prop}

\begin{rem}
\be
\item  With a slight abuse of notation we denote equally by $\vemb\in\hhcb^{2m}(\O(2m),\Zep)$ and by $\vemb\in\hhcb^{2m}(\SO(2m),\ZZ)$
the restriction of $\vemb$ respectively to $\O(2m)$ and to $\SO(2m)$.
\item If $m=1$ and with the usual slight abuse of notation, the restriction $\varepsilon_2^\mathrm{b}\in\hhcb^2(\isom(\HH^{2})^\circ,\ZZ)$ 
is the usual bounded Euler class, 
where $\isom(\HH^{2})^\circ$ is considered as a group of orientation preserving homeomorphisms of the circle.
\ee
\end{rem}

\subsection{Definition of volume and congruence relations}\label{sec:congruence}
Let $M$ be a complete finite volume hyperbolic $n$-dimensional manifold and 
let $\rho\colon \pi_1(M)\to \isomp(\HH^n)$ be a homomorphism.
Given a compact core $N$ of $M$ we consider $\rho$ as a representation of $\pi_1(N)$ and use
the pullback via $\rho$ in bounded cohomology together with the isomorphism
\bqn
\hb^n(\pi_1(N),\RR)\cong\hb^n(N,\RR)
\eqn
to obtain a bounded singular class in $\hb^n(N,\RR)$, denoted $\rho^*(\omega_n^\mathrm{b})$ by abuse of notation.
Using the isometric isomorphism
\bq\label{eq:j}
j\colon \hb^{2m}(N,\partial N,\RR)\longrightarrow\hb^{2m}(N,\RR)
\eq
\cite[Theorem~1.2]{BBFIPP},
the volume of $\rho$ is defined by 
\bqn
\Vol(\rho):=\langle j^{-1}(\rho^*(\omb)),[N,\partial N]\rangle\,.
\eqn
If $n=2m$, by the same abuse of notation, and by considering again the pullback in bounded
cohomology via $\rho$, this time together with the isomorphism
\bqn
\hb^{2m}(\pi_1(N),\ZZ)\cong
\hb^{2m}(N,\ZZ)\,,
\eqn
we obtain a class $\rho^*(\vemb)\in\hb^{2m}(N,\ZZ)$, which is a bounded singular integral class.
Thus, denoting $\delta^\mathrm{b}$ the connecting homomorphism in the long exact sequence 
in bounded singular cohomology
\bq\label{eq:delta}
\delta^\mathrm{b}\colon  \h^{2m-1}(\partial N,\RR/\ZZ)\longrightarrow\hb^{2m}(\partial N,\ZZ)
\eq
(which is in fact an isomorphism), we have:

\begin{thm} \label{thm: vol modulo boundary} 
Let $M$ be a complete hyperbolic manifold of finite volume and even dimension $n=2m$
and $N$ a compact core of $M$. 
If $\rho\colon \pi_1(M)\rightarrow \isomp(\HH^{2m})$, then
\bqn
(-1)^m\frac{2}{\Vol(S^{2m})}\cdot \Vol(\rho)\equiv-\langle (\delta^\mathrm{b})^{-1} \rho^*(\vemb)|_{\partial N} , [\partial N] \rangle\,\mathrm{mod}\, \ZZ\,.
\eqn
\end{thm}

\begin{proof} In fact, \eqref{eq:j} and \eqref{eq:delta} are part of the following diagram
\bqn
\xymatrix{ 
   \h^{2m-1}(\partial N,\RR/\ZZ)\ar[r]^-{\delta^\mathrm{b}}
&\hb^{2m}(\partial N,\ZZ)\ar[r]
&\hb^{2m}(\partial N,\RR)=0\\ 
&\hb^{2m}( N,\ZZ)\ar[r]\ar[u]
&\hb^{2m}(N,\RR)\ar[u]\\
&&\hb^{2m}(N,\partial N,\RR)\ar[u]_j\,,
}
\eqn
where the rows are obtained from the long exact sequence 
\bqn
\xymatrix@1{
\dots\ar[r]
&\hb^{n-1}(X,\RR/\ZZ)\ar[r]
&\hb^n(X,\ZZ)\ar[r]
&\hb^n(X,\RR)\ar[r]
&\hb^n(X,\RR/\ZZ)\ar[r]
&\dots\,,
}
\eqn
with $X=\partial N$ and $X=N$,
induced by the change of coefficients in 
\eqref{eq:0.8-}
and from the fact that $\hb^\bullet(\partial N,\RR)=0$ since $\pi_1(\partial N)$ is amenable;
the columns on the other hand follow from the long exact sequence in relative bounded
cohomology associated to the inclusion of pairs $(N,\varnothing)\hookrightarrow(N,\partial N)$
(see \cite[\S~2.2]{Burger_Iozzi_Wienhard_toledo}).

Let $z$ be a $\ZZ$-valued singular bounded cocycle representing $\rho^*(\vemb)\in\hb^{2m}(N,\ZZ)$. 
Restricting $z$ to the boundary $\partial N$ we obtain a $\ZZ$-valued singular bounded cocycle $z|_{\partial N}$,
which we know is a coboundary when considered as a $\RR$-valued cocycle since $\hb^{2m}(\partial N,\RR)=0$. 
Thus there must exist a bounded $\RR$-valued singular $(2m-1)$-cochain $b$ on $\partial N$ such that
\bqn
(z|_{\partial N})_\RR=d b\,,
\eqn
where $d$ is the differential operator on (bounded $\RR$-valued) singular cochains.

On the one hand, we note that since $d b$ is $\ZZ$-valued, 
the cochain $b\,\mathrm{mod}\, \ZZ$ is a $\RR/\ZZ$-valued $(2m-1)$-cocycle on $\partial N$ 
whose cohomology class is mapped to 
\bqn
[\rho^*(\vemb)|_{\partial N}]=[z|_{\partial N}]=\delta^{\mathrm{b}}([b\,\mathrm{mod}\,  \ZZ])
\eqn
by the connecting homomorphism $\delta^{\mathrm{b}}$ in \eqref{eq:delta}.

On the other hand, define a bounded $\RR$-valued singular $(2m-1)$-cochain $\overline{b}$ on $N$ 
by extending $b$ to $N$,
\bqn
\overline{b}(\sigma):= \left\{   \begin{array}{ll}
b(\sigma) & \mathrm{if \ } \sigma\subset \partial N, \\
0 & \mathrm{otherwise.}
\end{array} \right.
\eqn
Then $[z_\RR-d \overline{b}]=[z_\RR]\in\hb^{2m}(N,\RR)$, and since $z_\RR-d\overline{b}$ 
vanishes on $\partial N$ we have actually constructed a cocycle representing 
the relative bounded class $j^{-1}((\rho^*(\vemb))_\RR)\in \hb^{2m}(N,\partial N,\RR)$.

It remains to evaluate $j^{-1}((\rho^*(\vemb))_\RR)$ on $[N,\partial N]$ by using this specific cocycle. 
Let $t$ be a singular chain representing the relative fundamental class $[N,\partial N]$ over $\ZZ$.  
In particular, $\partial t$ is a cycle representing the fundamental class $[\partial N]$. Then we obtain
\begin{eqnarray*}
\langle j^{-1}((\rho^*(\vemb))_\RR), [N,\partial N]\rangle \,\mathrm{mod}\,  \ZZ&=&\langle z_\RR-d \overline{b}, t\rangle \,\mathrm{mod}\,  \ZZ\\
&=& \underbrace{\langle z_\RR, t\rangle}_{\in \ZZ} -\langle \overline{b}, \partial t \rangle  \,\mathrm{mod}\,  \ZZ\\
&=& -\langle b\,\mathrm{mod}\,  \ZZ, [\partial N]\rangle \\
&=&-\langle (\delta^{\mathrm{b}})^{-1}(\rho^*(\varepsilon_{2m})|_{\partial N}), [\partial N]\rangle. 
\end{eqnarray*}


\end{proof}

\section{Vanishing of the bounded Euler class on tori and the proof of Theorem \ref{thm: main thm}}\label{sec:vanishing}
The goal of this section is to prove Theorem \ref{thm: main thm}. 
From Theorem \ref{thm: vol modulo boundary}, we know that $(-1)^m2/\mathrm{Vol}(S^{2m})$ times the volume of a representation is determined, $\,\mathrm{mod}\,  \ZZ$, 
by the restriction to the cusps of the pullback of the bounded Euler class. 
The main result of this section will be to prove, in Theorem \ref{thm:vanishing},  
that the pullback of the bounded Euler class by any representation $ \rho\colon \ZZ^{2m-1}\to\SO(2m,1)^\circ$ is identically zero for $m\geq 2$. 
We conclude the section by showing how Theorems~\ref{thm: vol modulo boundary} and \ref{thm:vanishing} imply Theorem~\ref{thm: main thm}.

\medskip
The bounded class $\vemb\in\hhcb^{2m}(\SO(2m,1),\Zep)$ defined in the previous section restricts
to a class on $\SO(2m,1)^\circ$ with trivial $\ZZ$-coefficients.  When $m=1$ and 
\bqn
\rho\colon \ZZ\to\SO(2,1)^\circ
\eqn
is a homomorphism, then $\rho^*(\ve_2^{\mathrm{b}})\in\hb^2(\ZZ,\ZZ)\cong\RR/\ZZ$
is the negative of the rotation number of $\rho(1)$, \cite{Ghys}.  
In contrast to this, in higher dimension we have the following:

\begin{thm}\label{thm:vanishing}  Let $m\geq2$ and let $\rho\colon \ZZ^{2m-1}\to\SO(2m,1)^\circ$
be a homomorphism.  Then $\rho^*(\vemb)$ vanishes in $\hb^{2m}(\ZZ^{2m-1},\ZZ)$.
\end{thm}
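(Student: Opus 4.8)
The plan is to use amenability of $\ZZ^{2m-1}$ to reduce the integral class to an $\RR/\ZZ$-valued primitive, and then to kill that primitive by the multiplicativity of the Euler class together with the low cohomological dimension of the base. First I would run the coefficient sequence $0\to\ZZ\to\RR\to\RR/\ZZ\to0$, noting that the relevant coefficients are trivial since $\rho$ lands in $\SO(2m,1)^\circ$. Because $\ZZ^{2m-1}$ is amenable, $\hb^{k}(\ZZ^{2m-1},\RR)=0$ for every $k\ge1$; hence in the long exact sequence the connecting map $\delta^{\mathrm b}\colon\hb^{2m-1}(\ZZ^{2m-1},\RR/\ZZ)\to\hb^{2m}(\ZZ^{2m-1},\ZZ)$ is an isomorphism, and the real reduction of $\rho^*(\vemb)$, being a multiple of $\rho^*\omb$, vanishes automatically. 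Thus $\rho^*(\vemb)=\delta^{\mathrm b}(\eta)$ for a unique $\eta$, and it suffices to show $\eta=0$.

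Here I would exploit that $\RR/\ZZ$ is bounded, so that bounded and ordinary cohomology with $\RR/\ZZ$-coefficients coincide. Identifying $\ZZ^{2m-1}$ with the fundamental group of the torus $T^{2m-1}$, this places $\eta$ in the computable group $\h^{2m-1}(T^{2m-1},\RR/\ZZ)$. The heart of the argument is then a splitting. Up to conjugacy the abelian group $A=\rho(\ZZ^{2m-1})$ either fixes a point of $\HH^{2m}$, in which case it lies in a maximal torus $T^m\subset\SO(2m)$, or it fixes only boundary points and lies in a proper parabolic or a non-compact Cartan subgroup. In each case I would peel off one oriented $2$-plane factor and invoke a bounded analogue of the Whitney sum formula to write
\[
\rho^*(\vemb)=b_1\cup c,\qquad b_1\in\hb^2(\ZZ^{2m-1},\ZZ),\ c\in\hb^{2m-2}(\ZZ^{2m-1},\ZZ),
\]
where $b_1$ is the bounded Euler class of the first factor and $c$ that of its orthogonal complement.

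By amenability $c_\RR(b_1)=0$, so $b_1=\delta^{\mathrm b}(\theta_1)$ with $\theta_1\in\h^1(T^{2m-1},\RR/\ZZ)$; and since the Bockstein commutes, up to sign, with cupping by an integral class, one gets
\[
\rho^*(\vemb)=\delta^{\mathrm b}(\theta_1)\cup c=\delta^{\mathrm b}\!\left(\theta_1\cup c\right).
\]
Now $c_\ZZ(c)\in\h^{2m-2}(T^{2m-1},\ZZ)$ is the integral Euler class of a \emph{flat} bundle over a torus: its real reduction is zero and $\h^{2m-2}(T^{2m-1},\ZZ)$ is torsion-free, so $c_\ZZ(c)=0$. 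Because $2m-2\ge2$ this complementary factor genuinely occurs, and under the comparison isomorphism for $\RR/\ZZ$-coefficients $\theta_1\cup c$ maps to $\theta_1\cup c_\ZZ(c)=0$; hence $\theta_1\cup c=0$ and $\rho^*(\vemb)=0$. (In the parabolic/hyperbolic cases the argument is even shorter: the peeled factor is acted on by an amenable group with no rotation, so $b_1=0$ outright.)

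The main obstacle I expect is establishing this bounded Whitney splitting and organizing the case analysis cleanly. Concretely, one must show that the bounded Euler class of $\SO(2m,1)$ restricts on a maximal torus to the cup product of the bounded Euler classes of the $2$-plane sub-bundles — the bounded incarnation of the splitting principle — and that on a parabolic or non-compact Cartan the relevant factor has vanishing bounded Euler class, its complement acquiring a section exactly as the ordinary Euler class of $\SO(2m)$ dies on $\SO(2m-1)$. Once this multiplicative structure is available at the level of bounded cohomology, the dimension count $2m-2\ge2$ — which fails precisely at $m=1$, where only the genuine rotation number survives — forces the vanishing.
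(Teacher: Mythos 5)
Your overall architecture --- reduce via amenability and the Bockstein to an $\RR/\ZZ$-valued secondary class, split into an elliptic and a parabolic case, and exploit multiplicativity of the Euler class over the torus --- is the same as the paper's, and your ending for the elliptic case is a genuinely nicer alternative to what the paper does: where the paper evaluates an explicit cocycle representative against an explicit group cycle for the fundamental class of $\ZZ^{2m-1}$ and kills the resulting sum by a symmetrization trick with the orientation cocycle, you observe that $c_\ZZ(c)$ is the ordinary integral Euler class of a flat orthogonal bundle over a torus, hence torsion (its real image factors through $\hb^{2m-2}(\ZZ^{2m-1},\RR)=0$, or use Chern--Weil for a flat \emph{orthogonal} connection) and hence zero by torsion-freeness of $\h^{\ast}(\ZZ^{2m-1},\ZZ)$, so that $\theta_1\cup c=\theta_1\cup c_\ZZ(c)=0$. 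Moreover, the ``bounded Whitney formula'' you flag as the main obstacle is actually cheap: on a compact group $L$ the real continuous bounded cohomology vanishes, so the comparison map $\hhcb^{\bullet}(L,\Zep)\to\hhc^{\bullet}(L,\Zep)$ is an isomorphism (both sides are $\hc^{\bullet-1}(L,(\RR/\ZZ)_\ep)$ via the Bockstein), and the ordinary identity $\vem|_T=\ve_{(1)}\cup\dots\cup\ve_{(m)}$ upgrades to the bounded classes for free; this is exactly how the paper obtains it.

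There are, however, two genuine gaps. First, your elliptic reduction is false as stated: an Abelian subgroup of $\SO(2m)$ need not be conjugate into a maximal torus $\SO(2)^m$ (for $m\geq 2$ the diagonal $\pm1$ matrices already give a counterexample); simultaneous block-diagonalization only places it in the image of $\O(2)^m$, and the case in which some block projection leaves $\SO(2)$ must be treated separately --- the paper does so by noting that such a projection lands in a $2$-torsion group on which the (twisted) bounded Euler class $\ve_2^{\mathrm{b}}$ vanishes, so the cup product dies. Second, and more seriously, your parabolic case conflates ordinary with bounded vanishing. Ordinary vanishing carries no information here: $\h^{2m}(\ZZ^{2m-1},\ZZ)=0$ for dimension reasons, while $\hb^{2m}(\ZZ^{2m-1},\ZZ)\cong\h^{2m-1}(\ZZ^{2m-1},\RR/\ZZ)\cong\RR/\ZZ$, so the entire theorem is about the secondary class --- precisely the phenomenon that makes the rotation number nontrivial when $m=1$. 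Asserting that the bounded class dies on $P=MAN$ ``as the ordinary Euler class of $\SO(2m)$ dies on $\SO(2m-1)$'' therefore begs the question; note also that $P$ preserves no orthogonal $2$-plane splitting, so there is no $b_1\cup c$ decomposition to peel off there. What is actually needed is injectivity of the comparison map $\hhcb^{2m}(P,\ZZ)\to\hhc^{2m}(P,\ZZ)$, and the paper earns this with a genuine computation: $\hc^{2m-1}(P,\RR)=\hc^{2m}(P,\RR)=0$, proved via Mostow's theorem by exhibiting the unique invariant $(2m-1)$-form on $M\backslash P$ and checking that its exterior derivative does not vanish. Only then does the reduction of the structure group to $\SO(2m-1)$, via Wigner's theorem, kill first $\vem|_P$ and then $\vemb|_P$. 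Your proposal contains no substitute for this step.
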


Before proving the theorem we need some information about Abelian subgroups of $\SO(2m,1)^\circ$.  
To fix the notation, recall that 
\bqn
\SO(2m,1):=\bigg\{A\in\GL(2m+1,\RR):\,\det A=1,\,A\text{ preserves }q(x):=\sum_{i=1}^{2m}x_i^2-x_{2m+1}^2\bigg\}\,.
\eqn
Then the maximal compact subgroup $K<\SO(2m,1)$ is the image of $\O(2m)$ under the homomorphism
\bqn
\ba
\O(2m)&\longrightarrow\,\,\SO(2m,1)\\
A\quad&\longmapsto\begin{pmatrix}A&0\\0&\det A\end{pmatrix}\,,
\ea
\eqn
and the image $K^\circ$ of $\SO(2m)$ is the maximal compact subgroup of $\SO(2m,1)^\circ$.
If $T$ is the image of 
\bqn
\ba
\O(2)^m\quad&\longrightarrow\qquad\qquad\SO(2m,1)\\
(A_1,\dots,A_m)&\longmapsto
\begin{pmatrix}
A_1&           &       &\\
       &\ddots&       &\\
       &           &A_m&\\
       &           &       &\prod_{i=1}^m\det A_i
       \end{pmatrix}\,,
\ea
\eqn
we define
\bqn
T_0:=T\cap K^\circ\,.
\eqn

Since $\SO(2m,1)^\circ$ preserves each connected component of the two-sheeted hyperboloid
\bqn
x_1^2+\dots+x_{2m}^2-x_{2m+1}^2=-1\,,
\eqn
the parabolic subgroup $P=\Stab_{\SO(2m,1)^\circ}(\RR(e_1-e_{2m+1}))$
admits the decomposition $P=MAN$, where
\bqn
\ba
M:&=\left\{m(U):=\begin{pmatrix}1 &  &  \\ & U & \\ &  &  1\end{pmatrix}:\,U\in\SO(2m-1)\right\}\\
A:&=\left\{
a(t):=
\begin{pmatrix}
\cosh t&0&\sinh t\\
0&\id&0\\
\sinh t&0&\cosh t
\end{pmatrix}:\,
t\in\RR
\right\}
\ea
\eqn
and 
\bqn
N:=
\left\{
n(x):=
\begin{pmatrix}
1-\frac{\|x\|^2}{2}&-x&-\frac{\|x\|^2}{2}\\
{}^tx&I&{}^tx\\
\frac{\|x\|^2}{2}&x&1+\frac{\|x\|^2}{2}
\end{pmatrix}:\,x\in\RR^{2m-1}
\right\}\,.\eqn  

\medskip
We can now outline the proof of Theorem~\ref{thm:vanishing}.  
According to Lemma~\ref{lem:tricot} below, there are up to conjugation two cases to consider for 
$\rho:\ZZ^{2m-1}\to\SO(2m,1)^\circ$:
\begin{enumerate}
\item  $\rho$ takes values in $P$:  then, building on Lemma~\ref{lem:cohom of parabolic},
Lemma~\ref{lem:vanish} shows that $\varepsilon_{2m}^\mathrm{b}|_P\in\h_{\mathrm{B,b}}^{2m}(P,\ZZ)$
vanishes and this implies Theorem~\ref{thm:vanishing} in this case.
\item $\rho$ takes values in $T_0$:  this case splits into two subcases, namely $\rho(\ZZ^{2m-1})\not\subset T^\circ$,
which is dealt with in Lemma~\ref{lem:vanish2}, and $\rho(\ZZ^{2m-1})\subset T^\circ$.
In the latter case we represent $\varepsilon_{2m}^\mathrm{b}|_{\SO(2)^{2m}}$ as the image
under the connecting homomorphism of an explicit class in $\h_\mathrm{B}^{2m-1}(\SO(2)^{2m},\RR/\ZZ)$
whose pullback via $\rho$ we evaluate in Lemma~\ref{lem:rot=0}
on an explicit cycle representing the fundamental class $[\ZZ^{2m-1}]$ of $\h_{2m-1}(\ZZ^{2m-1},\ZZ)$
constructed in Lemma~\ref{lem: representation Zn}.
\end{enumerate}

\begin{lemma}\label{lem:tricot} Let $B<\SO(2m,1)^\circ$ be an Abelian group.
Then up to conjugation one of the following holds:
\be
\item $B<P$;
\item $B<T_0$.
\ee
\end{lemma}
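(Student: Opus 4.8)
The plan is to classify abelian subgroups $B < \SO(2m,1)^\circ$ by analyzing the dynamics of a single element and then propagating the constraint to the whole commuting family. First I would recall the trichotomy of isometries of $\HH^{2m}$: every $g \in \SO(2m,1)^\circ$ is either elliptic (fixing an interior point), hyperbolic (two fixed points on $\partial \HH^{2m}$), or parabolic (a single fixed point on $\partial \HH^{2m}$). For a single element this follows from the Jordan/real-canonical form of $g$ as a matrix preserving the quadratic form $q$, looking at whether the relevant eigenvalues force a timelike, spacelike, or null fixed direction. Conjugating, I may assume any prescribed fixed point is the standard one, i.e. that $g$ lies in $K^\circ$ (elliptic case), in the subgroup $MA$ up to conjugacy with $g$ having a hyperbolic $A$-part (hyperbolic case), or in $P = MAN$ with nontrivial $N$-component (parabolic case).

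The key step is a case split driven by the elements of $B$.

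\textbf{Case 1: every element of $B$ is elliptic.} Then I claim $B$ has a common fixed point in $\HH^{2m}$. Since $B$ is abelian, each $b$ preserves the fixed-point set of every other element; the fixed-point set of an elliptic isometry is a totally geodesic subspace, and a descending chain of such nonempty $B$-invariant totally geodesic subspaces stabilizes at a nonempty common fixed locus (this uses that the family commutes, so no element can move the fixed set of another off itself). Conjugating the common fixed point to the basepoint stabilized by $K^\circ$, the whole group $B$ lands in $K^\circ \cong \SO(2m)$. A commuting family in $\SO(2m)$ is simultaneously conjugate into a maximal torus, which is exactly $T_0 = T \cap K^\circ$; this gives conclusion (2).

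\textbf{Case 2: some element $g \in B$ is not elliptic,} i.e. $g$ is hyperbolic or parabolic and hence fixes a point $\xi \in \partial \HH^{2m}$. Here I would argue that every element of $B$ fixes the same boundary point $\xi$, so that $B$ stabilizes $\xi$ and therefore lies in the corresponding parabolic subgroup, which up to conjugation is $P$, giving conclusion (1). If $g$ is parabolic with unique fixed point $\xi$, then for any $b \in B$ the point $b\xi$ is also fixed by $g = bgb^{-1}$ (using $gb = bg$), forcing $b\xi = \xi$ by uniqueness. If $g$ is hyperbolic with attracting/repelling fixed points $\xi_\pm$, then $b$ permutes $\{\xi_+, \xi_-\}$; the subgroup fixing both is index at most two in $B$, and the remaining issue is whether some $b$ swaps $\xi_+ \leftrightarrow \xi_-$. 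Such a $b$ would have to invert the $A$-translation direction, and I expect to rule this out — or absorb it — by noting that a swap conjugates the hyperbolic $g$ to $g^{-1}$, which contradicts commutativity unless $g = g^{-1}$, impossible for a nontrivial hyperbolic element. Hence all of $B$ fixes a common boundary point and lands in $P$.

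\textbf{Main obstacle.} The delicate point is the bookkeeping in Case 2 when $B$ contains both hyperbolic and parabolic elements sharing boundary data, and more subtly ensuring in Case 1 that the common-fixed-point argument genuinely produces a single point (equivalently a single orbit of $K^\circ$) rather than merely a proper invariant subspace — the induction on the dimension of the intersection of fixed-point sets must terminate with a nonempty set, which relies on each individual fixed locus being a nonempty totally geodesic subspace and on commutativity keeping the chain $B$-invariant. Once the common fixed structure (interior point, or boundary point) is pinned down, landing inside $T_0$ respectively $P$ is a standard conjugation argument using the explicit descriptions of $K^\circ$, $T_0$, and $P = MAN$ given above.
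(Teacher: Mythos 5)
Your proposal is correct in substance and lands on the same two conjugacy targets, but it takes a genuinely different route from the paper. The paper invokes, at the level of the whole group, the standard trichotomy for an abelian group acting on $\overline{\HH^{2m}}$: $B$ fixes a boundary point, fixes an interior point, or leaves a geodesic globally invariant; it then collapses the third case into the first two. Notably, when some $b\in B$ swaps the endpoints $g_\pm$ of the invariant geodesic, the paper does not derive a contradiction but \emph{uses} $b$: acting on the geodesic, $b$ is an orientation-reversing isometry with a unique fixed point $g_0$, and commutativity forces all of $B$ to fix $g_0$, returning to the elliptic case. You instead argue element by element: all elements elliptic gives a common interior fixed point via intersecting fixed-point sets (your dimension-stabilization remark is exactly what makes this terminate), and one non-elliptic element forces $B$ to fix its boundary fixed point(s). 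Your route is more self-contained — it effectively proves the trichotomy the paper quotes — at the cost of the circumcenter/Cartan fixed-point argument and the chain argument. The two treatments of the swap are consistent: the endpoints of the \emph{axis of a hyperbolic element of $B$} can never be swapped by a commuting element (your case), whereas an invariant geodesic that is not such an axis can have swapped endpoints (the paper's case, e.g.\ $B$ generated by a rotation by $\pi$ about a point of the geodesic). One small repair in your Case 2: an element swapping $\xi_\pm$ conjugates $g$ to a hyperbolic element with the same translation length and interchanged attracting/repelling points, but not necessarily to $g^{-1}$; the clean contradiction is that commutativity gives $bgb^{-1}=g$, while the attracting fixed point of $bgb^{-1}$ is $b\xi_+=\xi_-\neq\xi_+$.

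There is, however, one genuine error in your Case 1: the claim that a commuting family in $\SO(2m)$ is simultaneously conjugate into a maximal torus, ``which is exactly $T_0$.'' Both halves are wrong. First, $T_0=T\cap K^\circ$ is not a torus: it is disconnected, containing blocks such as $\diag(1,-1)$; the maximal torus is the image $T^0$ of $\SO(2)^m$. Second, an abelian subgroup of $\SO(2m)$ need not be conjugate into any maximal torus: for $m\geq 2$ the diagonal matrices with entries $\pm1$ and determinant $1$ form an elementary abelian $2$-group of order $2^{2m-1}$, too large to embed in the $2$-torsion $(\ZZ/2)^m$ of $T^0$. What is true — and what the paper uses — is that a commuting family of orthogonal transformations of $\RR^{2m}$ can be simultaneously reduced to $2\times2$ block form (decompose $\RR^{2m}$ into real irreducibles of the closure of $B$, which are $1$- or $2$-dimensional, and pair up the one-dimensional $\pm1$ summands); this lands exactly in $T_0$, not in $T^0$. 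The distinction is not pedantic in this paper: Lemma~\ref{lem:vanish2} deals precisely with homomorphisms into $T_0$ that do \emph{not} take values in $T^0$, so conflating $T_0$ with the maximal torus would derail the argument downstream of the lemma, even though the conjugacy target you name is the correct one.
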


\begin{proof}  Since $B$ is Abelian, it is elementary \cite[Lemma~1, \S~5.5]{Ratcliffe},
that is, it is either of elliptic type or of parabolic type or of hyperbolic type.
\be
\item[(i)] If $B$ is of parabolic type, it fixes a point in $\partial\HH^{2m}$  \cite[Theorem~5.5.1]{Ratcliffe}
and thus it can be conjugated into $P$.
\item[(ii)] If $B$ is of elliptic type it fixes a point in $\HH^{2m}$  \cite[Theorem~5.5.3]{Ratcliffe}
and hence it can be conjugated into $K^\circ\cong\SO(2m)$.
Since it is Abelian, it can be simultaneously reduced to a diagonal $2\times 2$ bloc form and hence
can be conjugated into $T_0$.
\item[(iii)] If $B$ is of hyperbolic type, every union of finite orbits in $\overline{\HH^{2m}}$ consists of two points,  \cite[Theorem~5.5.6]{Ratcliffe}.
Thus there is a geodesic $g\in\HH^{2m}$ that is left setwise invariant by $B$.  
If $\{g_-,g_+\}$ are its endpoints, then either $Bg_+=g_+$ and we are in case (i) above, or there is $b\in B$ with $bg_+=g_-$.  
But then $B$ fixes the unique $b$-fixed point $g_0\in g$, hence we are in case (ii).
\ee
\end{proof}

\begin{rem}  Since Theorems~5.5.1, 5.5.3 and 5.5.6 in  \cite{Ratcliffe} are characterizations respectively of elliptic,
parabolic and hyperbolic groups of isometries, 
it follows from Lemma~\ref{lem:tricot} that an Abelian group $B<\SO(2m,1)^\circ$ cannot be hyperbolic.
\end{rem}

\begin{lemma}\label{lem:cohom of parabolic}  Let $P<\SO(2m,1)^\circ$ be the minimal parabolic subgroup
as above.  Then:
\bqn
\h_\mathrm{B}^k(P,\RR)=0
\eqn
for  $k=2m-1$ and $k=2m$.
\end{lemma}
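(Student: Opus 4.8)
The plan is to compute $\hc^*(P,\RR)$ via van Est's theorem and reduce the statement to a completely explicit relative Lie algebra cohomology calculation. Since $AN$ is simply connected solvable, hence contractible, and is normal in $P=MAN$ with $P/AN\cong M$, the compact group $M\cong\SO(2m-1)$ is a maximal compact subgroup of $P$. Van Est's theorem then gives $\hc^*(P,\RR)\cong H^*(\frakp,\frakm;\RR)$, relative Lie algebra cohomology, computed by the complex of $M$-invariant horizontal forms $\big((\wedge^\bullet(\fraka\oplus\frakn)^*)^M,\,d\big)$ with the Chevalley--Eilenberg differential. Because $\fraka\oplus\frakn=\Lie(AN)$ is a subalgebra (brackets of its elements have no $\frakm$-component, as $N$ is abelian and $[\fraka,\frakn]\subset\frakn$), this relative differential coincides with the Chevalley--Eilenberg differential of $\fraka\oplus\frakn$ restricted to $M$-invariants.

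Next I would write the differential explicitly. Let $H$ span $\fraka$, let $\xi_1,\dots,\xi_{2m-1}$ be the basis of $\frakn^*$ dual to a basis of $\frakn$, and let $\eta\in\fraka^*$ be dual to $H$. The only nontrivial bracket is $[H,X]=X$ for $X\in\frakn$, so $d\eta=0$ and $d\xi_k=-\eta\wedge\xi_k$; since $d$ is a derivation this gives
\bqn
d\beta=-j\,\eta\wedge\beta\qquad\text{for every }\beta\in\wedge^j\frakn^*.
\eqn
Now $M=\SO(2m-1)$ acts on $\frakn^*\cong\RR^{2m-1}$ by the standard representation and trivially on $\eta$. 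In degree $2m-1$ the only $M$-invariant forms are the multiples of $\omega_0:=\xi_1\wedge\cdots\wedge\xi_{2m-1}$, the invariant top form on $\frakn^*$ (invariant since $M$ acts with determinant $1$; the summand $\eta\wedge\wedge^{2m-2}\frakn^*$ contributes nothing because $(\wedge^{2m-2}\frakn^*)^M\cong(\frakn^*)^M=0$), and $d\omega_0=-(2m-1)\,\eta\wedge\omega_0\neq0$. Hence there are no nonzero invariant cocycles and $\hc^{2m-1}(P,\RR)=0$. In degree $2m$ the invariant forms are the multiples of the volume form $\eta\wedge\omega_0$, and $\eta\wedge\omega_0=-\tfrac{1}{2m-1}\,d\omega_0$ is exact, so $\hc^{2m}(P,\RR)=0$.

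The delicate points are bookkeeping rather than conceptual: verifying that the relative Chevalley--Eilenberg differential is the one above (i.e.\ that brackets of $\fraka\oplus\frakn$ stay in $\fraka\oplus\frakn$), and identifying the $M$-invariants in the two relevant degrees, which rests on $\SO(2m-1)$ fixing the top exterior power of $\frakn^*$ while having no invariant vectors in $\frakn^*\cong\wedge^{2m-2}\frakn^*$. I expect the main obstacle to be purely organizational, namely choosing whether to invoke van Est for $P$ directly or to first pass through $\hc^*(P,\RR)=\hc^*(AN,\RR)^M$ (using that $M$ is compact), since either route leads to the same explicit complex. The computation in fact shows more, that $\hc^q(P,\RR)=0$ for all $q\geq2$, of which the stated lemma is the special case $q=2m-1,2m$, valid once $2m-1\geq2$.
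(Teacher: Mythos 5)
Your proof is correct and is essentially the paper's own argument: the paper invokes Mostow's theorem (via Borel--Wallach) to identify $\hc^*(P,\RR)$ with the cohomology of the complex of $P$-invariant forms on $M\backslash P$, which is exactly the relative complex $\big((\wedge^\bullet(\fraka\oplus\frakn)^*)^M,d\big)$ you reach through van Est. The key steps coincide as well: both proofs show the invariant $(2m-1)$-forms are one-dimensional, spanned by the top form on $\frakn^*$ (using that $\SO(2m-1)$ has no invariants in $\wedge^{2m-2}\frakn^*\cong\frakn^*$), and that its differential -- your $-(2m-1)\,\eta\wedge\omega_0$, the paper's $-(2m-1)e^{-(2m-1)t}dt\wedge dx_1\wedge\dots\wedge dx_{2m-1}$ -- is nonzero, which disposes of both degrees at once.
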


\begin{proof} 
%
%
Combining \eqref{eq:0.1} and \eqref{eq:0.2}, we show the vanishing assertion by showing the vanishing of the cohomology of $P$-invariant
differential forms on $M\backslash P$ in degrees $2m$ and $2m-1$.  
This follows if we establish the following:
\begin{claim}  $\Omega^{2m-1}(M\backslash P)^P=\RR\alpha$,  where $\alpha$ is an explicit $(2m-1)$ form with $d\alpha\neq0$.
\end{claim}

Let us first see how the claim implies the vanishing.  It follows from the claim that 
\bqn
d\colon\Omega^{2m-1}(M\backslash P)^P\to \Omega^{2m}(M\backslash P)^P
\eqn
is injective, which implies the vanishing in degree $2m-1$.  Since $P$ acts transitively on $M\backslash P$
and $\dim(M\backslash P)=2m$, then $\dim \Omega^{2m}(M\backslash P)^P\leq1$.
But then $d\alpha\neq0$ implies that $\dim\Omega^{2m}(M\backslash P)^P=1$,
the image of $d$ is $\Omega^{2m}(M\backslash P)^P$ which implies the vanishing in degree $2m$.

\medskip
We now prove the claim.  
Observe that 
\bqn
\Omega^{2m-1}(M\backslash P)^P\simeq\Lambda^{*(2m-1)}T_{x_0}(M\backslash P)^M\,,
\eqn
where $\Lambda^{*(2m-1)}T_{x_0}(M\backslash P)^M$ denotes the space of $M$-invariant $(2m-1)$-multilinear forms on 
the tangent space to $M\backslash P$ at the basepoint $x_0=[M]\in M\backslash P$.

Under the identification 
\bq\label{eq:id}
\ba
M\backslash P\quad&\longrightarrow\RR\times\RR^{2m-1}\\
Ma(t)n(x)&\longmapsto\,\,(t,x)
\ea
\eq
the point $x_0$ corresponds to $(0,0)$, so that 
\bqn
T_{x_0}(M\backslash P)\simeq T_{(0,0)}(\RR\times\RR^{2m-1})\simeq\RR\times\RR^{2m-1}
\eqn
and the $M$-action on $T_{x_0}(M\backslash P)$ corresponds to the action on $\RR\times\RR^{2m-1}$
via $\id\times\SO(2m-1)$
\bqn
\xymatrix{
(t,x)\ar[rr]^{\id\times m(U)}
&&(t,xU)\,.
}
\eqn
Let $\{e_0,e_1,\dots,e_{2m-1}\}$ be the canonical basis of $\RR\times\RR^{2m-1}$
(with $e_0$ spanning $\RR$) and let $\{e_0^\ast,e_1^\ast,\dots,e_{2m-1}^\ast\}$ be the dual basis.
Every $(2m-1)$-multilinear form $\alpha$ has a canonical decomposition
\bqn
\alpha=e_0^\ast\wedge\beta+\omega\,,
\eqn
where $\beta$ is a $(2m-2)$-multilinear form on $\RR^{2m-1}$ and 
$\omega$ is the pullback to $\RR\times\RR^{2m-1}$ of a $(2m-1)$-multilinear form on $\RR^{2m-1}$.
By uniqueness of the decomposition, $\alpha$ is $\id\times\SO(2m-1)$-invariant if and only if
$\beta$ and $\omega$ are $\SO(2m-1)$-invariant.  
Thus $\omega$ is a multiple of the determinant 
\bqn
\omega=\lambda e_1^\ast\wedge\dots\wedge\e_{2m-1}^\ast
\eqn
and if we show that there are no $\SO(2m-1)$-invariant $(2m-2)$-multilinear forms on $\RR^{2m-1}$
we will have shown that $\Omega^{2m-1}(M\backslash P)^P$ is one-dimensional.

The fact that $\Lambda^{\ast{2m-1}}(\RR^{2m-1})$ is one-dimensional and the pairing
\bqn
\ba
\Lambda^{\ast{2m-2}}(\RR^{2m-1})\times(\RR^{2m-1})^\ast&\to\Lambda^{\ast{2m-1}}(\RR^{2m-1})\\
(\beta,\lambda)\qquad\qquad&\longmapsto\quad\beta\wedge\lambda
\ea
\eqn
show that there is an isomorphism
\bqn
\ba
\Lambda^{\ast{2m-2}}(\RR^{2m-1})\qquad&\longrightarrow (\RR^{2m-1})^\ast\\
e_1^\ast\wedge\dots\wedge \widehat{e_j^\ast}\wedge\dots \wedge e_{2m-1}^\ast&\mapsto \qquad e_j^\ast
\ea
\eqn
that is $\SO(2m-1)$-equivariant.  Since the $\SO(2m-1)$-action on $(\RR^{2m-1})^\ast$ is irreducible,
there are no $\SO(2m-1)$-invariant $(2m-2)$-multilinear forms on $\RR^{2m-1}$,
thus showing that $\Omega^{2m-1}(M\backslash P)^P$ is one-dimensional.

We show now that the exterior derivative on $\Omega^{2m-1}(M\backslash P)^P$ does not vanish.
With the identification \eqref{eq:id}, the right translation $R_{(t,x)}$ by an element $m(U)a(t)n(x)$ is given by
\bqn
R_{(t,x)}(s,y)=(s+t,e^tyU+x)\,.
\eqn
In fact
\bqn
\ba
  R_{(t,x)}(s,y)
&=(M a(s)n(y) )m(U)a(t)n(x)\\
&=\underbrace{M m(U)}_{=M}\underbrace{m(U)^{-1}a(s)}_{=a(s)m(U)^{-1}}n(y)m(U)a(t)n(x)\\
&=Ma(s)\underbrace{m(U)^{-1}n(y)m(U)}_{=n(yU)}a(t)n(x)\\
&=Ma(s)n(yU)a(t)n(x)\\
&=Ma(s)a(t)\underbrace{a(t)^{-1}n(yU)a(t)}_{=n(e^tyU)}n(x)\\
&=Ma(s+t)n(e^tyU+x)\,.
\ea
\eqn
In particular
\bqn
R_{(t,x)}(0,0)=Ma(t)n(x)\simeq(t,x),
\eqn
so that $\omega\in\Omega^{2m-1}(\RR\times\RR^{2m-1})$ can be extended to a $P$-invariant differential $(2m-1)$-form
\bqn
\omega_{(0,0)}((s_1,y_1),\dots,(s_{2m-1},y_{2m-1}))=((R_{(t,x)})^\ast\omega)_{(0,0)}((s_1,y_1),\dots,(s_{2m-1},y_{2m-1}))\,
\eqn
for $(s_j,t_j)\in\RR\times\RR^{2m-1}\simeq T_{(0,0)}(\RR\times\RR^{2m-1})$, $j=1,\dots,2m-1$.

Since 
\bqn
\ba
{}&\omega_{(0,0)}((s_1,y_1),\dots,(s_{2m-1},y_{2m-1}))\\
&=\omega_{(t,x)}(D_{(0,0)}R_{(t,x)}((s_1,y_1)),\dots,D_{(0,0)}R_{(t,x)}((s_{2m-1},y_{2m-1})))\\
&=\omega_{(t,x)}((s_1,e^ty_1),\dots,(s_{2m-1},e^ty_{2m-1}))\\
&=e^{(2m-1)t}\omega_{(t,x)}((s_1,y_1),\dots,(s_{2m-1},y_{2m-1}))\,,
\ea
\eqn
then 
\bqn
\omega_{(t,x)}=e^{-(2m-1)t}\omega_{(0,0)}\,,
\eqn
so that 
\bqn
d\omega=-(2m-1)e^{-(2m-1)t}dt\wedge \omega
\eqn
is not vanishing.  This shows the claim and completes the proof of the lemma.
\end{proof}

\begin{lemma}\label{lem:vanish}  If $m\geq2$ then the restriction
\bqn
\vemb|_P\in\hhb^{2m}(P,\ZZ)
\eqn
vanishes.
\end{lemma}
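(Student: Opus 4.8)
The plan is to run the diagram chase of Lemma~\ref{lem:comp} one level down, on the subgroup $P$ and on the restricted class $\vemb|_P$, exploiting crucially that $P$ is amenable. Throughout I restrict everything from $G=\SO(2m,1)$; since $P\subset\SO(2m,1)^\circ$ the module $\Zep$ becomes trivial $\ZZ$ upon restriction, and because restriction commutes with the change of coefficients and with the comparison maps, Theorem~\ref{thm:bdd-euler} gives $c_\ZZ(\vemb|_P)=\vem|_P$.

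First I record the inputs specific to $P$. Since $P=MAN$ with $M\cong\SO(2m-1)$ compact, $A\cong\RR$ and $N\cong\RR^{2m-1}$, the group $P$ is a compact extension of the solvable group $AN$, hence amenable; therefore $\hhb^{2m}(P,\RR)=0$. Moreover $\RR/\ZZ$ is a bounded module, so bounded and Borel cochains with coefficients in $\RR/\ZZ$ coincide and the relevant groups $\hc^{\ast}(P,\RR/\ZZ)$ may be treated simultaneously as bounded and unbounded (exactly as in the diagram of Lemma~\ref{lem:comp}). Finally Lemma~\ref{lem:cohom of parabolic} provides $\hc^{2m-1}(P,\RR)=0$.

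Now the reduction. Passing to $\RR$-coefficients carries $\vemb|_P$ into $\hhb^{2m}(P,\RR)=0$, so by exactness of the bounded sequence there is $\sigma\in\hc^{2m-1}(P,\RR/\ZZ)$ with $\delta^{\mathrm{b}}(\sigma)=\vemb|_P$. Applying $c_\ZZ$ and using $c_\ZZ\circ\delta^{\mathrm{b}}=\delta$ together with $c_\ZZ(\vemb|_P)=\vem|_P$, I get $\delta(\sigma)=\vem|_P$ in $\hhc^{2m}(P,\ZZ)$. The connecting map $\delta:\hc^{2m-1}(P,\RR/\ZZ)\to\hhc^{2m}(P,\ZZ)$ is injective, since its kernel is the image of $\hc^{2m-1}(P,\RR)=0$. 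Hence it suffices to prove $\vem|_P=0$: this forces $\sigma=0$ and therefore $\vemb|_P=\delta^{\mathrm{b}}(\sigma)=0$.

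It remains to show $\vem|_P=0$, which is the real content of the lemma and the step I expect to be the main obstacle. By Wigner's theorem \cite{Wigner} one has $\hhc^{\ast}(P,\ZZ)\cong\h^{\ast}(\mathrm{B}P,\ZZ)$, and since the maximal compact subgroup of $P$ is $M\cong\SO(2m-1)$ the inclusion $M\hookrightarrow P$ induces a homotopy equivalence $\mathrm{B}M\simeq\mathrm{B}P$. Under these identifications $\vem|_P$ is the pullback of the Euler class of $\mathrm{B}\SO(2m)$ along the map $\mathrm{B}\SO(2m-1)\to\mathrm{B}\SO(2m)$ coming from the inclusion $M\hookrightarrow K^\circ\cong\SO(2m)$, $U\mapsto\operatorname{diag}(1,U)$. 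This classifies the universal oriented $2m$-plane bundle restricted to $\mathrm{B}\SO(2m-1)$, which splits off a trivial line summand and hence admits a nowhere-vanishing section; by multiplicativity of the Euler class \cite[\S~9]{Milnor_Stasheff} its Euler class vanishes. Therefore $\vem|_P=0$, which completes the argument. The delicate point is precisely this last identification: one must correctly recognize the restricted \emph{integral} Euler class as a pullback to $\mathrm{B}\SO(2m-1)$ (via Wigner's theorem and the retraction of $P$ onto its maximal compact) and keep careful track of the interplay between Borel and continuous cohomology; granting this, the remainder is the formal diagram chase of Lemma~\ref{lem:comp} fed by amenability of $P$ and by Lemma~\ref{lem:cohom of parabolic}.
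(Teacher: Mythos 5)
Your proof is correct and follows essentially the same route as the paper's: the same long exact sequences from $0\to\ZZ\to\RR\to\RR/\ZZ\to0$, with amenability of $P$ killing $\hcb^{2m}(P,\RR)$ and Lemma~\ref{lem:cohom of parabolic} killing $\hc^{2m-1}(P,\RR)$, reducing the bounded statement to the vanishing of the ordinary class $\vem|_P$, which is then obtained via Wigner's theorem and the reduction to the maximal compact $M\cong\SO(2m-1)$. The only difference is cosmetic: you phrase the diagram chase as injectivity of $\delta$ plus surjectivity of $\delta^{\mathrm{b}}$ rather than as the isomorphism $\hhcb^{2m}(P,\ZZ)\cong\hhc^{2m}(P,\ZZ)$, and you spell out (via the trivial line summand of the pulled-back universal bundle) the fact that the Euler class dies on $\SO(2m-1)$, which the paper simply quotes.
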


\begin{proof} Considering the long exact sequence in bounded and ordinary cohomology
associated to \eqref{eq:0.8-},
and taking into account Lemma~\ref{lem:cohom of parabolic} and the vanishing 
of bounded real cohomology of $P$ (\ref{eq:06}), we obtain the diagram
\bqn
\xymatrix{
  0\ar[r]
&\h_\mathrm{B}^{2m-1}(P,\RR/\ZZ)\ar[r]^-{\delta^{\mathrm{b}}}\ar@{=}[d]
&\hhcb^{2m}(P,\ZZ)\ar[r]\ar[d]^{c_\ZZ}
&0\\
  0\ar[r]
&\h_\mathrm{B}^{2m-1}(P,\RR/\ZZ)\ar[r]_-\delta
&\hhc^{2m}(P,\ZZ)\ar[r]
&0\,,
}
\eqn
where we used Lemma~\ref{lem:cohom of parabolic} in the bottom row and
the vanishing of bounded cohomology with real coefficients in the top row.
Thus $\hhcb^{2m}(P,\ZZ)\cong\hhc^{2m}(P,\ZZ)$.
If we show that $\vem|_P=0$, this will imply that $\vemb|_P=0$.

To see that $\vem|_P=0$, by naturality of Wigner's isomorphism \eqref{eq:0.3} and the fact that $M$ is maximal compact in $P$,
the restriction to $M$ induces an isomorphism
$\hhc^{2m}(P,\ZZ)\cong\hhc^{2m}(M,\ZZ)$.
Since however the Euler class $\vem$ of $\SO(2m)$ restricted to 
$\left\{\begin{pmatrix}1&0\\0&U\end{pmatrix}:\,U\in\SO(2m-1)\right\}$
vanishes, we conclude that  $\vem|_P=0$.
\end{proof}

We deduce then using Lemma~\ref{lem:vanish} that Theorem~\ref{thm:vanishing} holds 
if the image of $\rho$ is contained in $P$.  We must therefore turn to the case
in which $\rho(\ZZ^{2m-1})$ lies in $T_0$.  Let then
\bqn
\pi_i\colon T\longrightarrow\O(2)
\eqn
be the projection on the $i$-th factor of $T$ and let 
\bqn
\vei:=\pi_i^*(\ve_2)\quad\text{ and }\quad\veib:=\pi_i^*(\ve_2^{\mathrm{b}})\,,
\eqn
where $\ve_2$ and $\ve_2^{\mathrm{b}}$ are respectively  the Euler class and the bounded
Euler class of $\O(2)$.  Observe that if $L$ is compact, the long exact sequence \eqref{eq:0.7} gives
\bq\label{eq:lulodiagram}
\xymatrix{
 0\ar[r]
& \h_\mathrm{B}^\bullet(L,\Rep/\Zep)\ar[r]^\cong\ar@{=}[d]
&\hhcb^\bullet(L,\Zep)\ar[d]\ar[r]
&0\\
 0\ar[r]
&\h_\mathrm{B}^\bullet(L,\Rep/\Zep)\ar[r]^\cong
&\hhc^\bullet(L,\Zep)\ar[r]
&0\,.
}
\eq
Since the ordinary Euler class is a characteristic class and $T$ is a product, we have
\bq\label{eq:eucup}
\vem|_T=\ve_{(1)}\abxcup\dots\abxcup\ve_{(m)}\,,
\eq
and hence it follows from \eqref{eq:lulodiagram} that
\bq\label{eq:eumcup}
\vemb|_T=\ve_{(1)}^{\mathrm{b}}\abxcup\dots\abxcup\ve_{(m)}^{\mathrm{b}}\,.
\eq
Observe that the image of $\SO(2)^m$ in $T$ is its connected component $T^\circ$.

\begin{lemma}\label{lem:vanish2}  If $\rho\colon \ZZ^{2m-1}\to T_0$ does not take values in $T^\circ$, 
then $\rho^*(\vemb)=0$.
\end{lemma}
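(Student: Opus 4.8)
The goal is to show that if $\rho:\ZZ^{2m-1}\to T_0$ does not take values in $T^0$, then the pullback $\rho^*(\vemb)$ vanishes in $\hb^{2m}(\ZZ^{2m-1},\ZZ)$. The key structural fact is that $T^0$ (the image of $\SO(2)^m$) is precisely the orientation-preserving, connected part of $T_0$: an element of $T_0$ lies outside $T^0$ exactly when at least one of its $\O(2)$-components has determinant $-1$. The plan is to exploit the product formula \eqref{eq:eumcup}, namely $\vemb|_T=\ve_{(1)}^{\mathrm{b}}\abxcup\dots\abxcup\ve_{(m)}^{\mathrm{b}}$, and to show that under the stated hypothesis one of the factors $\rho^*(\veib)$ already vanishes, forcing the whole cup product to vanish.

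**Main steps.**
First I would compose $\rho$ with the $i$-th projection $\pi_i:T\to\O(2)$ to obtain homomorphisms $\pi_i\circ\rho:\ZZ^{2m-1}\to\O(2)$, so that by naturality
\bqn
\rho^*(\vemb)=(\pi_1\circ\rho)^*(\ve_2^{\mathrm{b}})\abxcup\dots\abxcup(\pi_m\circ\rho)^*(\ve_2^{\mathrm{b}})\,.
\eqn
By hypothesis there is an index $i$ such that $\pi_i\circ\rho$ does not take values in $\SO(2)$; equivalently the composition with $\ep:\O(2)\to\{-1,1\}$ (the determinant) is a nontrivial homomorphism $\chi:\ZZ^{2m-1}\to\{\pm1\}$. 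The heart of the argument is then to show that $(\pi_i\circ\rho)^*(\ve_2^{\mathrm{b}})=0$ in $\hb^2(\ZZ^{2m-1},\ZZ)$ whenever the image meets the nonidentity component of $\O(2)$. The kernel $H:=\ker\chi$ is a finite-index (index $2$) subgroup of $\ZZ^{2m-1}$ on which $\pi_i\circ\rho$ lands in $\SO(2)$; since restriction to a finite-index subgroup is injective on real bounded cohomology (the class is $\ZZ$-valued, so I would pass through $\RR$-coefficients), it suffices to argue that the class vanishes after restriction, using that on the abelian group $\ZZ^{2m-1}$ the relevant bounded Euler class is detected by its real image, which is a genuine rotation-number cocycle. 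The point is that the Euler class of $\O(2)$, restricted to a group hitting both components, cannot survive as a real class: the orientation-reversing elements conjugate the rotation to its inverse, so the rotation-number homomorphism on $H$ is forced to be $2$-torsion modulo the $\chi$-twist, hence trivial in $\hb^2(\cdot,\RR)$.

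**The main obstacle.**
The delicate point is the interplay between the nontrivial $\ep$-twisting in the coefficient module $\Zep$ and the vanishing of the real bounded Euler class. I would handle this by working with $\RR$-coefficients first: since $c_\RR$ is injective on the relevant bounded cohomology of the abelian group (indeed the comparison map into ordinary cohomology together with the exact sequence of \eqref{eq:change of coefficients} controls the torsion), it is enough to show that the image of $(\pi_i\circ\rho)^*(\ve_2^{\mathrm{b}})$ in $\hb^2(\ZZ^{2m-1},\Rep)$ vanishes. On the index-two subgroup $H$ the class restricts to an honest rotation number, and the existence of an orientation-reversing element in the image means this rotation number must be fixed by inversion, i.e. it equals its own negative in $\RR/\ZZ$ after untwisting; combined with the injectivity of restriction to $H$ this gives vanishing. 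I expect the careful bookkeeping of the twisted coefficients $\Zep$ versus $\RR/\ZZ$ across the transfer to $H$ to be where the real work lies; the topological input (inversion of rotations under reflections) is elementary, but making it rigorous at the level of \emph{bounded} cohomology classes, rather than ordinary ones, requires invoking the injectivity of $c_\RR$ and of the restriction map on a finite-index subgroup at exactly the right stage.
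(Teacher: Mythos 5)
Your first two steps (the decomposition via \eqref{eq:eumcup} and the choice of an index $i$ with $\chi:=\det\circ\,\pi_i\circ\rho$ non-trivial) agree with the paper, but the mechanism you propose for killing the $i$-th factor is structurally unsound. The decisive point is that $\ZZ^{2m-1}$ is \emph{amenable}: $\hb^n(\ZZ^{2m-1},\RR_\chi)=0$ for all $n\geq 1$, which is exactly why $\delta^{\mathrm{b}}\colon \h^1(\ZZ^{2m-1},(\RR/\ZZ)_\chi)\to\hb^2(\ZZ^{2m-1},\ZZ_\chi)$ is an isomorphism. Consequently the image of $(\pi_i\circ\rho)^*(\ve_2^{\mathrm{b}})$ in $\hb^2(\ZZ^{2m-1},\RR_\chi)$ vanishes for \emph{every} $\rho$, hypothesis or not, and this proves nothing: the kernel of the change of coefficients $\ZZ_\chi\to\RR_\chi$ is the whole group $\hb^2(\ZZ^{2m-1},\ZZ_\chi)\cong\h^1(\ZZ^{2m-1},(\RR/\ZZ)_\chi)\neq 0$. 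The class you must control is an $\RR/\ZZ$-phenomenon, invisible to real coefficients; the injectivity of $c_\RR$ you invoke (the analogue of Lemma~\ref{lem:comp}) is a feature of the semisimple group $G$, not of amenable groups, where the comparison map is zero in positive degrees. The second pillar fails as well: restriction to $H=\ker\chi$ is \emph{not} injective on these classes, since transfer inverts restriction only after dividing by the index $2$, which is impossible on $2$-torsion. Concretely, a homomorphism $\ZZ^{2m-1}\to\RR/\ZZ$ equal to $1/2$ on a generator outside $H$ and to $0$ on $H$ is non-zero yet restricts to zero on $H$, and under $\delta^{\mathrm{b}}$ such homomorphisms are exactly the classes in play. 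Your final observation that reflections invert rotations correctly shows the relevant rotation numbers are $2$-torsion; but $2$-torsion is not zero, and ``trivial in $\hb^2(\cdot,\RR)$'' is automatic rather than a conclusion.

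The paper's own proof is instead a direct computation with finite groups and twisted coefficients: the image $\pi_i\rho(\ZZ^{2m-1})$ is an abelian subgroup of $\O(2)$ not contained in $\SO(2)$, the paper asserts it is of the form $\{1,\sigma\}$ with $\sigma$ a reflection, and $\ve_2^{\mathrm{b}}$ restricts to zero there because $\hb^2(\{1,\sigma\},\Zep)\cong\h^1(\ZZ/2,(\RR/\ZZ)_\epsilon)=0$ for the sign action --- no passage to real coefficients at all. Be aware, moreover, that the factor-wise vanishing you aim for is \emph{false} in general: an abelian subgroup of $\O(2)$ not contained in $\SO(2)$ can also be the Klein four-group $\{1,\rho_\pi,\sigma,\rho_\pi\sigma\}$ (where $\rho_\pi$ is the rotation by $\pi$), and on it $\ve_2^{\mathrm{b}}$ is non-zero: it is the class of the non-split extension of this group by $\ZZ$ obtained by pulling back $0\to\ZZ\to\isom(\RR)\to\isom(S^1)\to 0$ (the preimage is an infinite dihedral group, whose finite subgroups have order at most $2$). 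Its pullback under $\pi_i\circ\rho$ is then non-zero whenever $\rho_\pi$ lies in the image, because the corresponding class in $\h^1(\ZZ^{2m-1},(\RR/\ZZ)_\chi)$ takes the value $1/2$ at a point of $\ker\chi$, while every coboundary vanishes on $\ker\chi$. So no argument that only kills the single $i$-th factor can cover this case (the paper's classification of abelian subgroups of $\O(2)$, as written, also omits it); a complete proof has to handle the Klein four-group by a genuinely different mechanism.
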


\begin{proof}  An Abelian subgroup of $\O(2)$ not contained in $\SO(2)$ is of the form $\{1,\sigma\}$, with $\sigma^2=e$.  
Thus if $\rho(\ZZ^{2m-1})\not\subset T^\circ$,
there is $\pi_i$ such that $\pi_i\rho(\ZZ^{2m-1})\subset\{1,\sigma\}$.  Since $\hb^2(\{1,\sigma\},\Zep)=0$, 
then $\ve_2^\mathrm{b}$ vanishes on $\{1,\sigma\}$, and hence $\rho^*(\vemb)=0$ by \eqref{eq:eumcup}.
\end{proof}

Thus we are reduced to analyze homomorphisms 
\bqn
\rho\colon \ZZ^{2m-1}\longrightarrow T^\circ\cong\SO(2)^m\,.
\eqn
As before, since $\SO(2)^m$ is compact and hence amenable, the connecting homomorphism 
\bqn
  \delta^\mathrm{b}\colon \h_\mathrm{B}^{2m-1}(\SO(2)^m,\RR/\ZZ)\longrightarrow
  \hhcb^{2m}(\SO(2)^m,\ZZ)
\eqn
is an isomorphism. 
We fix the orientation preserving 
identification 
\bqn
\SO(2)\longrightarrow\RR/\ZZ\,,
\eqn
and, for $m=1$, we define the homogeneous $1$-cocycle
\bq \label{eq:rot}
\ba
\rot\colon \SO(2)\times\SO(2)&\longrightarrow\SO(2)\cong\RR/\ZZ\\
(g,h)\qquad&\longmapsto \qquad g^{-1}h
\ea
\eq
and
\bqn
[\rot]\in\h_\mathrm{B}^1(\SO(2),\RR/\ZZ)
\eqn
the corresponding class.  Then
$\delta^\mathrm{b}([\rot])=\ve_2^\mathrm{b}$ and from this we deduce that if
\bqn
\vartheta:=\pi_1^*([\rot])\in\h_\mathrm{B}^1(\SO(2)^m,\RR/\ZZ)\,,
\eqn
then $\delta^\mathrm{b}$ maps the class 
$\vartheta\abxcup\ve_{(2)}\abxcup\dots\abxcup\ve_{(m)}\in\h_\mathrm{B}^{2m-1}(\SO(2)^m,\RR/\ZZ)$
to the class $\ve_{(1)}^\mathrm{b}\abxcup\dots\abxcup\ve_{(m)}^\mathrm{b}\in\hhcb^{2m}(\mathrm{SO}(2)^m,\ZZ)$.
As a result, it follows from the commutativity of the square
\bqn
\xymatrix{
  \h_\mathrm{B}^{2m-1}(\SO(2)^m,\RR/\ZZ)\ar[r]^-{\delta^\mathrm{b}}\ar[d]_{\rho^\ast}
&\hhcb^{2m}(\SO(2)^m,\ZZ)\ar[d]^{\rho^*}\\
  \h^{2m-1}(\ZZ^{2m-1},\RR/\ZZ)\ar[r]^-{\delta^\mathrm{b}}
&\hb^{2m}(\ZZ^{2m-1},\ZZ)
}
\eqn
that 
\bq\label{eq:2.4}
\rho^*(\vemb)=\delta^\mathrm{b}(\rho^*(\vartheta\abxcup\ve_{(2)}\abxcup\dots\abxcup\ve_{(m)}))\,.
\eq
In order to prove the vanishing of the left hand side, we are going to show that 
$\rho^*(\vartheta\abxcup\ve_{(2)}\abxcup\dots\abxcup\ve_{(m)})=0$.  To this end we use that the pairing
\bq\label{eq:pairing}
\ba
\h^{2m-1}(\ZZ^{2m-1},\RR/\ZZ)&\longrightarrow\qquad\,\,\RR/\ZZ\\
\beta\qquad\qquad&\longmapsto\,\,\langle\beta,[\ZZ^{2m-1}]\rangle
\ea
\eq
is an isomorphism, 
where $[\ZZ^{2m-1}]\in\h_{2m-1}(\ZZ^{2m-1},\ZZ)\cong\ZZ$ denotes the fundamental class.

We will need the following

\begin{lemma}\label{lem: representation Zn}  Let $n$ be even and $n\geq2$ and let $e_1,\dots,e_n$ be the canonical basis of $\ZZ^n$.
Then the group chain
\bqn
z= \sum_{\sigma\in \mathrm{Sym}(n)} \mathrm{sign}(\sigma)[0,e_{\sigma(1)},e_{\sigma(1)}+e_{\sigma(2)},\dots,e_{\sigma(1)}+\dots+e_{\sigma(n)}]
\eqn
is a representative of the fundamental class  $[\ZZ^n]\in \h_n(\ZZ^n,\ZZ)\cong \ZZ$.
\end{lemma}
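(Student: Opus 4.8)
The plan is to exhibit an explicit chain in the bar (or equivalently inhomogeneous/simplicial) complex computing the group homology of $\ZZ^n$ and to verify directly that it is a cycle representing a generator. Since $\ZZ^n$ is the fundamental group of the $n$-torus $\TT^n$, which is a closed oriented aspherical manifold, we have $\h_n(\ZZ^n,\ZZ)\cong\h_n(\TT^n,\ZZ)\cong\ZZ$, generated by the fundamental class $[\TT^n]$. The natural cell structure on $\TT^n=\RR^n/\ZZ^n$ is obtained by subdividing the unit cube $[0,1]^n$ into $n!$ simplices, one for each ordering of the coordinate directions; the $\sigma$-th simplex has vertices $0,e_{\sigma(1)},e_{\sigma(1)}+e_{\sigma(2)},\dots,e_{\sigma(1)}+\dots+e_{\sigma(n)}$, and the alternating sign $\sign(\sigma)$ is precisely what is needed so that the orientations of the top simplices agree with the standard orientation of the cube. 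So the geometric content is entirely classical; the work is to phrase it in group-homological language.

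The first step is to recall that the homology of $\ZZ^n$ is computed by the standard (inhomogeneous) bar resolution, and that a tuple $[v_0,v_1,\dots,v_n]$ of group elements denotes the associated homogeneous simplex, with boundary $\partial[v_0,\dots,v_n]=\sum_{i=0}^n(-1)^i[v_0,\dots,\widehat{v_i},\dots,v_n]$. First I would verify that $z$ is a cycle: grouping the terms of $\partial z$, each face of a top simplex $[0,e_{\sigma(1)},\dots]$ is obtained by omitting one partial-sum vertex, and such a face is shared by exactly one other top simplex coming from an adjacent permutation $\sigma'$ (a neighboring ordering differing by an adjacent transposition); since $\sign(\sigma')=-\sign(\sigma)$ while the shared face carries opposite induced orientations, these contributions cancel in pairs. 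This is the standard cancellation underlying the simplicial decomposition of a cube, and it shows $\partial z=0$.

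The second step is to identify $z$ with a \emph{generator} rather than merely a cycle. Here I would invoke the isomorphism $\h_n(\ZZ^n,\ZZ)\cong\h_n(\TT^n,\ZZ)$ and observe that under the quotient map the chain $z$ maps to the fundamental cycle of the triangulated torus: the $n!$ simplices, with signs $\sign(\sigma)$, tile $\TT^n$ with matching orientations and cover it exactly once, so their signed sum is a generator of $\h_n(\TT^n,\ZZ)\cong\ZZ$. Concretely, one may pair $z$ with the generator of $\h^n(\ZZ^n,\ZZ)\cong\ZZ$ given by the cup product $\pr_1^*u\abxcup\dots\abxcup\pr_n^*u$ of the canonical classes $u\in\h^1(\ZZ,\ZZ)$ and check that the evaluation equals $\pm1$; by the combinatorial formula for the cup product in the bar complex this pairing reduces to $\sum_{\sigma}\sign(\sigma)\prod_i\delta(\dots)$, which telescopes to a single surviving term contributing $\pm1$. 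Either route confirms that $z$ represents $\pm[\ZZ^n]$, and the sign is fixed to $+1$ by the orientation convention.

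The main obstacle is the bookkeeping in the cycle verification: one must keep careful track of which partial-sum vertices get omitted, match each resulting face with its unique partner permutation, and confirm that the combination of the permutation sign flip with the positional sign $(-1)^i$ in the simplicial boundary produces genuine cancellation rather than reinforcement. This is a routine but sign-delicate computation, and it is the only place where something could go wrong; the identification of the generator is then essentially formal once the geometric picture of the cube subdivision is in hand. For the needs of the paper, the cleanest exposition is probably to carry out the cycle check combinatorially and then appeal to the torus picture for genericity, as evaluation against $\pr_1^*u\abxcup\dots\abxcup\pr_n^*u$ gives the normalization $\langle z,\cdot\rangle=\pm1$ with little extra effort.
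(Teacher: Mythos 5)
Your proposal is correct, and its overall architecture coincides with the paper's: first check that $z$ is a cycle, then identify it as a generator by evaluating a suitably normalized top-degree cohomology class on it. The difference lies in which class is used. The paper pairs $z$ with the Euclidean volume class $\omega_{\RR^n}\in\h^n(\ZZ^n,\RR)$, represented by the explicit cocycle $V(v_0,\dots,v_n)=\frac{1}{n!}\det(v_1-v_0,\dots,v_n-v_0)$; each of the $n!$ summands of $z$ then contributes $\sign(\sigma)\cdot\sign(\sigma)\frac{1}{n!}=\frac{1}{n!}$, so $\langle\omega_{\RR^n},z\rangle=1$, and since this class evaluates to $1$ on the fundamental class, $z$ must represent it. Your pairing with $\pr_1^*u\abxcup\dots\abxcup\pr_n^*u$ accomplishes the same normalization dually, with only the single term $\sigma=\id$ surviving; it is equally short and stays over $\ZZ$, but it needs the Alexander--Whitney cup-product formula in the bar complex and the fact that the pairing $\h^n(\ZZ^n,\ZZ)\times\h_n(\ZZ^n,\ZZ)\to\ZZ$ is perfect (true here because the homology of $\ZZ^n$ is free). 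Your geometric route through the triangulated torus is also valid but is essentially the same argument in disguise: after transporting $z$ to a singular cycle by affine simplices, one still has to certify that the signed sum of the $n!$ simplices covers $\TT^n$ with degree one, which is again an orientation or volume count.

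One imprecision in your cycle check should be flagged: only the faces obtained by omitting an interior partial sum $e_{\sigma(1)}+\dots+e_{\sigma(i)}$ with $1\le i\le n-1$ cancel via adjacent transpositions $\sigma\mapsto\sigma\circ(i,i+1)$. The two extreme faces, omitting $0$ or omitting $e_{\sigma(1)}+\dots+e_{\sigma(n)}$, are not paired that way. Instead, the $0$-th face of the simplex of $\sigma$ equals, after translating by $-e_{\sigma(1)}$ (legitimate because group chains are taken modulo the $\ZZ^n$-action), the $n$-th face of the simplex of $\sigma\circ(1\,2\,\cdots\,n)$; since $\sign(1\,2\,\cdots\,n)=(-1)^{n-1}$ while the positional signs are $(+1)$ and $(-1)^n$, these contributions cancel as well. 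Without this second mechanism the $i=0$ and $i=n$ terms of $\partial z$ would remain, so any complete write-up must include it; it does not affect the validity of your overall plan.
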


\begin{proof} We first check that $\partial z=0$ so that $z$ is indeed a cycle. 
Let 
\bqn
s_\sigma:=[0,e_{\sigma(1)},e_{\sigma(1)}+e_{\sigma(2)},\dots,e_{\sigma(1)}+\dots+e_{\sigma(n)}]\,.
\eqn
Observe that, for $1\leq i\leq n-1$, the sum of each of the $i$-th faces of the simplices appearing in $z$ 
over all permutations of $\sigma\in \mathrm{Sym}(n)$ vanishes since
\bqn
\partial_is_\sigma=[0,e_{\sigma(1)},e_{\sigma(1)}+e_{\sigma(2)},\dots,\wideparen{e_{\sigma(1)}+\dots+e_{\sigma(i)}},\dots,e_{\sigma(1)}+\dots+e_{\sigma(n)}]
\eqn
is invariant under the odd transposition $\tau_i:=(\sigma(i),\sigma(i+1))$. 
Indeed if $1\leq i\leq n-1$,
\bqn
\ba
\sum_{\sigma\in \mathrm{Sym}(n)} \mathrm{sign}(\sigma)\partial_is_\sigma
&=\sum_{\sigma\in \mathrm{Sym}(n)}\mathrm{sign}(\sigma)\partial_is_{\sigma\tau_i}\\
&=\sum_{\sigma\in \mathrm{Sym}(n)}\mathrm{sign}(\sigma\tau_i^{-1})\partial_is_{\sigma}\\
&=-\sum_{\sigma\in \mathrm{Sym}(n)}\mathrm{sign}(\sigma)\partial_is_\sigma
\ea
\eqn
and hence implies that $\sum_{\sigma\in \mathrm{Sym}(n)}\mathrm{sign}(\sigma)\partial_is_\sigma=0$.
Thus
\bq\label{eq:5.75}
\partial z=\sum_{\sigma\in \mathrm{Sym}(n)}\mathrm{sign}(\sigma)\partial_0s_\sigma+\sum_{\sigma\in \mathrm{Sym}(n)}\mathrm{sign}(\sigma)\partial_ns_\sigma
\eq
and it remains to see that the $0$-th and $n$-th face cancel each other. 
By $\ZZ^n$-invariance,
\bq\label{eqn:0andnFaces}
\ba
\partial_0s_\sigma
&=\small{[e_{\sigma(1)},e_{\sigma(1)}+e_{\sigma(2)},\dots,e_{\sigma(1)}+\dots+e_{\sigma(n)}]}\\
&=\small{[0,e_{\sigma(2)},\dots,e_{\sigma(2)}+\dots+e_{\sigma(n)}]}\\
&=\small{[0,e_{\sigma\tau(1)},\dots,e_{\sigma\tau(1)}+\dots+e_{\sigma\tau(n-1)}]}\\
&=\partial_n s_{\sigma\tau}\,,
\ea
\eq
where $\tau=(1,2,\dots,n)$ is the cyclic permutation of signature $\mathrm{sign}(\tau)=(-1)^{n-1}$. 
From \eqref{eq:5.75} and \eqref{eqn:0andnFaces} follows that
\bqn
\ba
\partial z
&=\sum_{\sigma\in \mathrm{Sym}(n)}\mathrm{sign}(\sigma)\partial_ns_{\sigma\tau}+\sum_{\sigma\in \mathrm{Sym}(n)}\mathrm{sign}(\sigma)\partial_ns_{\sigma}\\
&=\sum_{\sigma\in \mathrm{Sym}(n)}(-1)^{n-1}\mathrm{sign}(\sigma)\partial_ns_{\sigma}+\sum_{\sigma\in \mathrm{Sym}(n)}\mathrm{sign}(\sigma)\partial_ns_{\sigma}
=0
\ea
\eqn
since $n$ is even.

Let $\omega_{\RR^n}\in\h^n(\ZZ^n,\RR)$ be the Euclidean volume class. 
Note that the volume class evaluates to $1$ on the fundamental class, 
as the $n$-torus generated by the canonical basis has volume $1$. 
A cocycle representing $\omega_{\RR^n}$ is given by $V_n\colon (\ZZ^n)^{n+1}\rightarrow \RR$ 
sending $v_0,v_1,\dots,v_n$ to the signed volume of the affine simplex 
with vertices in the lattice $\ZZ^n\subset \RR^n$, that is 
\bqn
V_n(v_0,v_1,\dots,v_n)=\frac{1}{n!} \mathrm{det}(v_1-v_0,\dots,v_n-v_0)\,.
\eqn
In order to show that 
\bqn
\langle \omega_{\RR^n}, [\ZZ^n]\rangle=V_n(z)=1
\eqn 
we need to evaluate $V_n$ on each summand of $z$. 
By definition,
\bqn
\ba
&V_n(0,e_{\sigma(1)},e_{\sigma(1)}+e_{\sigma(2)},\dots,e_{\sigma(1)}+\dots+e_{\sigma(n)})\\
=&\frac{1}{n!}\det(e_{\sigma(1)},e_{\sigma(1)}+e_{\sigma(2)},\dots,e_{\sigma(1)}+\dots+e_{\sigma(n)})\\
=&\mathrm{sign}(\sigma)\frac{1}{n!}\det(e_1,e_1+e_2\dots,e_1+\dots+e_n)\\
=&\mathrm{sign}(\sigma)\frac{1}{n!}\,,
\ea
\eqn
where we have used for the second equality the fact that the determinant is alternating with respect to line permutations. 
Summing up, we obtain
\bqn
\langle \omega_{\RR^n} , [\ZZ^n] \rangle = \sum_{\sigma\in \mathrm{Sym}(n)} \frac{1}{n!}=1\,,
\eqn
thus proving the lemma. 
\end{proof}

\begin{lemma}\label{lem:rot=0}  Let $m\geq2$.  Then 
\bqn
\langle\rho^*(\vartheta\abxcup\ve_{(2)}\abxcup\dots\abxcup\ve_{(m)}),[\ZZ^{2m-1}]\rangle=0\,.
\eqn
\end{lemma}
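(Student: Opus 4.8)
The plan is to exploit the product structure of the class together with the fact that $m\geq 2$ forces at least one genuine Euler factor into the product. Write $\rho=(\rho_1,\dots,\rho_m)$ with components $\rho_i:=\pi_i\circ\rho:\ZZ^{2m-1}\to\SO(2)$. Since pullback is a homomorphism of graded rings,
\bqn
\rho^*\big(\vartheta\abxcup\ve_{(2)}\abxcup\dots\abxcup\ve_{(m)}\big)=\rho_1^*(\vartheta)\abxcup\rho_2^*(\ve_2)\abxcup\dots\abxcup\rho_m^*(\ve_2)\,,
\eqn
so it suffices to show that a single one of the Euler factors $\rho_i^*(\ve_2)\in\h^2(\ZZ^{2m-1},\ZZ)$ with $i\geq 2$ already vanishes; as $m\geq 2$ there is at least one such factor, and its vanishing annihilates the whole class, hence in particular its pairing with $[\ZZ^{2m-1}]$.

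The key point is that each component lifts. Because $\ZZ^{2m-1}$ is free abelian, the character $\rho_i:\ZZ^{2m-1}\to\SO(2)\cong\RR/\ZZ$ lifts to a homomorphism $\tilde\rho_i:\ZZ^{2m-1}\to\RR$, i.e. $\rho_i=p\circ\tilde\rho_i$ with $p:\RR\to\RR/\ZZ$ the projection. Recalling that the ordinary Bockstein $\delta:\hc^1(\SO(2),\RR/\ZZ)\to\hc^2(\SO(2),\ZZ)$ satisfies $\delta[\rot]=\ve_2$ (the unbounded counterpart of the relation $\delta^{\mathrm b}[\rot]=\ve_2^{\mathrm b}$ recorded in \S\ref{sec:vanishing}), and that connecting homomorphisms are natural under pullback, I would compute
\bqn
\rho_i^*(\ve_2)=\rho_i^*\big(\delta[\rot]\big)=\delta\big(\rho_i^*[\rot]\big)=\delta(\rho_i)\,,
\eqn
where $\rho_i^*[\rot]=\rho_i\in\mathrm{Hom}(\ZZ^{2m-1},\RR/\ZZ)=\h^1(\ZZ^{2m-1},\RR/\ZZ)$. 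Since $\rho_i=p\circ\tilde\rho_i$ factors through $\RR$, its Bockstein vanishes, whence $\rho_i^*(\ve_2)=0$. As a reassuring check, $\rho_i^*(\ve_2)$ is the Euler class of a \emph{flat} circle bundle over the torus $(\RR/\ZZ)^{2m-1}$, hence torsion, and $\h^2(\ZZ^{2m-1},\ZZ)\cong\Lambda^2\ZZ^{2m-1}$ is torsion free, so it must be zero.

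The one step that needs care is precisely this identity $\rho_i^*(\ve_2)=\delta(\rho_i)$: transporting the continuous class $\ve_2=\delta[\rot]$ of \S\ref{sec:vanishing} to ordinary group cohomology of the discrete group $\ZZ^{2m-1}$ and invoking naturality of $\delta$ under the homomorphism $\rho_i$; everything else is formal. I emphasise that $m\geq 2$ is indispensable here, since it is exactly what guarantees a nonempty block of Euler factors $\ve_{(2)},\dots,\ve_{(m)}$: for $m=1$ the class is just $\vartheta$ and $\rho^*\vartheta$ is the (generally nonzero) rotation number, matching the $\SO(2,1)^\circ$ phenomenon recalled before Theorem~\ref{thm:vanishing}. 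If one prefers to stay at cocycle level one can instead exhibit the vanishing factor as an explicit integer coboundary $\rho_i^*(\ve_2)=\delta\!\big(s\circ\rho_i-\tilde\rho_i\big)$ for a section $s:\RR/\ZZ\to[0,1)$, so that the cup product becomes a coboundary and evaluates to $0$ on the explicit representative of $[\ZZ^{2m-1}]$ from Lemma~\ref{lem: representation Zn}; that representative is in any case the device needed for the nonvanishing evaluations in the subsequent volume computation, whereas here the lifting argument is shorter and I would use it.
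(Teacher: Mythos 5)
Your proof is correct, and it takes a genuinely different route from the paper's. The paper argues at the cochain/cycle level: it represents $\ve_2$ by $-\tfrac12$ times the orientation cocycle, writes out the resulting explicit representative $\mathrm{Rot}_1\cdot\mathrm{Or}_2\cdots\mathrm{Or}_m$ of $\vartheta\abxcup\ve_{(2)}\abxcup\dots\abxcup\ve_{(m)}$, evaluates it on the explicit fundamental cycle $z$ of Lemma~\ref{lem: representation Zn}, and kills the resulting sum by a symmetrization trick: each permutation $\sigma$ is paired with $\sigma\circ(2\,3)$, and the difference of the two $\mathrm{Or}_2$-factors vanishes by $(\SO(2))^m$-invariance, alternation, and the identity $\mathrm{Or}(f,g,h)=-\mathrm{Or}(f^{-1},g^{-1},h^{-1})$. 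You instead work entirely with cohomology classes: pullback respects the mixed-coefficient cup product, and each \emph{unbounded} Euler factor $\rho_i^*(\ve_2)=\delta(\rho_i)$ already vanishes because every character of the free abelian group $\ZZ^{2m-1}$ lifts to $\RR$; since $m\geq 2$ provides at least one such factor, the whole class $\rho^*(\vartheta\abxcup\ve_{(2)}\abxcup\dots\abxcup\ve_{(m)})$ is zero, which is even stronger than the stated vanishing of the pairing (equivalent here, since \eqref{eq:pairing} is an isomorphism). The two steps you flag as needing care are indeed available in the paper's framework: the unbounded relation $\delta([\rot])=\ve_2$ follows from $\delta^{\mathrm{b}}([\rot])=\ve_2^{\mathrm{b}}$ together with diagram \eqref{eq:lulodiagram}, in which both connecting maps are isomorphisms compatible with $c_\ZZ$ for compact groups, and naturality of connecting homomorphisms under pullback along $\rho_i$ is standard. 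What your route buys: it is shorter, more conceptual, isolates the structural reason for the lemma (the ordinary Euler class of a genuinely $\SO(2)$-flat bundle over a torus is trivial), and makes transparent both why $m\geq2$ is needed and why only the bounded factor $\vartheta$ — the rotation number — can survive. What the paper's computation buys: it stays self-contained at the level of explicit cocycles and of the cycle $z$ it has already constructed, needing neither the unbounded Bockstein relation nor naturality bookkeeping across Borel-to-discrete pullbacks. One small caution on your ``reassuring check'': the Euler class of a flat circle bundle is \emph{not} torsion for general structure groups (Milnor--Wood), only for structure group $\SO(2)$ as here, so that remark should not be promoted to the main argument; your lifting argument is the clean one.
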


\begin{proof}  
We use as a representative of $\ve_2\in\h^2(\mathrm{SO}(2),\ZZ)$ the multiple $-\frac12$ of the orientation cocycle
(see Remark~\ref{rem:3.1}).
This cocycle takes values in $\frac{1}{2}\mathbb{Z}$ but represents an integral class 
and in particular evaluates to an integer on a fundamental class.
Hence a representative for $\kappa:=\vartheta\abxcup \ve_{(2)}\abxcup \dots \abxcup \ve_{(m)}\in\h_\mathrm{B}^{2m-1}(\SO(2)^m,\RR/\ZZ)$ 
is given by the cocycle mapping $g_0,\dots,g_{2m-1}\in\SO(2)^m $ to the product
\bqn
\frac{(-1)^{m-1}}{2^{m-1}}\cdot  \mathrm{Rot}_1(g_0,g_1)\cdot \mathrm{Or}_2(g_1,g_2,g_3)\cdot \dots \cdot \mathrm{Or}_m(g_{2m-3},g_{2m-2},g_{2m-1})\,,
\eqn
where $\mathrm{Rot}_1$ denotes the pullback to $\SO(2)^m$ via the first projection $\pi_1$ of the homogeneous 
1-cocycle $\rot$ defined in (\ref{eq:rot}), while $\mathrm{Or}_j$ denotes the pullback to $\SO(2)^m$ via $\pi_j$ of the orientation cocycle
\bqn
\mathrm{Or}\colon \SO(2)^3\longrightarrow\{-1,0,1\}\,.
\eqn
We now evaluate the pullback $\rho^*(\kappa)$ on the cycle $z$ of Lemma~\ref{lem: representation Zn} and, 
writing $f_i=\rho(e_i)\in  \SO(2)^m$, obtain
\bq \label{equ: rho on torus}
\ba
&\langle\rho^*(\kappa),[\ZZ^{2m-1}]\rangle\\
=&\frac{(-1)^{m-1}}{2^{m-1}} \sum_{\sigma\in \mathrm{Sym}(n)} \mathrm{sign}(\sigma) \mathrm{Rot}_1(0,f_{\sigma(1)}) \cdot\mathrm{Or}_2(f_{\sigma(1)}, f_{\sigma(1)}f_{\sigma(2)},f_{\sigma(1)} f_{\sigma(2)}f_{\sigma(3)})\\
&\hphantom{XXXXxXXXX}\cdot (\mathrm{Or}_3\abxcup \dots \abxcup \mathrm{Or}_m)(f_{\sigma(1)} f_{\sigma(2)}f_{\sigma(3)},\dots,f_{\sigma(1)}\cdot \dots\cdot f_{\sigma(n)}).
\ea
\eq

To prove that this expression vanishes first observe that 
\bqn
\mathrm{Or}(f_1,f_2,f_3)=-\mathrm{Or}(f_1^{-1},f_2^{-1},f_3^{-1})
\eqn
holds for any $f_1,f_2,f_3$ in $\SO(2)$ (but not in $\SL(2,\mathbb{R})$). 
Specializing to $f_1=g^{-1}$, $f_2=\mathrm{Id}$, $f_3=h$ and using that $\mathrm{Or}$ is alternating gives
\bqn
\mathrm{Or}(g^{-1},\mathrm{Id},h)=\mathrm{Or}(h^{-1},\mathrm{Id},g)
\eqn
for any $g,h$ in $\SO(2)$. Finally, using the invariance of $\mathrm{Or}$, 
we can multiply the variables on the left hand side of the latter equality by $fg$, and the variables on the right hand side by $fh$ to obtain
\bq \label{equ:Or(f,fg,fgh)}
\mathrm{Or}(f,fg,fgh)=\mathrm{Or}(f,fh,fhg)
\eq
for any $f,g,h$ in $\SO(2)$. 

Thus from \eqref{equ: rho on torus} and \eqref{equ:Or(f,fg,fgh)} and with $\tau_2=(2 3)$, we obtain the asserted vanishing as
\bqn
\ba
&\langle\rho^*(\kappa),[\ZZ^{2m-1}]\rangle\\
=&\frac{(-1)^{m-1}}{2^{m-1}} \sum_{\sigma\in \mathrm{Sym}(n)} \mathrm{sign}(\sigma) \mathrm{Rot}_1(0,f_{\sigma(1)}) \cdot\mathrm{Or}_2(f_{\sigma(1)}, f_{\sigma(1)}f_{\sigma(3)},f_{\sigma(1)} f_{\sigma(3)}f_{\sigma(2)})\\
&\hphantom{XXXXxXXXX}\cdot (\mathrm{Or}_3\abxcup \dots \abxcup \mathrm{Or}_m)(f_{\sigma(1)} f_{\sigma(2)}f_{\sigma(3)},\dots,f_{\sigma(1)}\cdot \dots\cdot f_{\sigma(n)})\\
=&\mathrm{sign}(\tau_2)\frac{(-1)^{m-1}}{2^{m-1}} \sum_{\sigma\in \mathrm{Sym}(n)} \mathrm{sign}(\sigma) \mathrm{Rot}_1(0,f_{\sigma(1)}) \cdot\mathrm{Or}_2(f_{\sigma(1)}, f_{\sigma(1)}f_{\sigma(2)},f_{\sigma(1)} f_{\sigma(2)}f_{\sigma(3)})\\
&\hphantom{XXXXxXXXX}\cdot (\mathrm{Or}_3\abxcup \dots \abxcup \mathrm{Or}_m)(f_{\sigma(1)} f_{\sigma(2)}f_{\sigma(3)},\dots,f_{\sigma(1)}\cdot \dots\cdot f_{\sigma(n)})\\
=&-\langle\rho^*(\kappa),[\ZZ^{2m-1}]\rangle\,.
\ea
\eqn

\end{proof}

\begin{proof}[Proof of Theorem~\ref{thm:vanishing}]  Let $\rho\colon \ZZ^{2m-1}\to\SO(2m,1)^\circ$ be a homomorphism.  
By Lemma~\ref{lem:tricot} either $\rho(\ZZ^{2m-1})<P$ up to conjugation and then $\rho^*(\vemb)=0$
by Lemma~\ref{lem:vanish} or $\rho^*(\ZZ^{2m-1})<T_0$ up to conjugacy. 
Then either $\rho(\ZZ^{2m-1})\not\subset T^\circ$ and the vanishing follows from Lemma~\ref{lem:vanish2} or
$\rho(\ZZ^{2m-1})<T^\circ$, in which case Lemma~\ref{lem:rot=0} and \eqref{eq:2.4} imply that 
$\rho^*(\vartheta\abxcup\ve_{(2)}\abxcup\dots\abxcup\ve_{(m)})=0$ and hence $\rho^*(\vemb)=0$ by \eqref{eq:pairing}.
\end{proof}


\begin{proof}[Proof of Theorem \ref{thm: main thm}] 
%
Let $N$ be a compact core of $M$ and let  $C_1,\dots,C_h$ the connected components of the boundary of $\partial N$.
It follows from Theorem~\ref{thm: vol modulo boundary} that 
\bqn
(-1)^m\frac{2}{\Vol(S^{2m})}\Vol(\rho)\equiv-\sum_{i=1}^h \langle (\delta^\mathrm{b})^{-1}(\rho^*(\vemb)|_{C_i},[C_i]\rangle\mod \ZZ\,,
\eqn
with the usual abuse of notation that $\rho^*(\vemb)|_{C_i}$ refers to the element in $\hb^{2m}(C_i,\ZZ)$ 
corresponding to $(\rho|_{\pi_1(C_i)})^*(\vemb)\in\hb^{2m}(\pi_1(C_i),\ZZ)$.

If now all the $C_i$'s are tori, the above congruence relation and Theorem~\ref{thm:vanishing} imply
that $(-1)^m\frac{2}{\Vol(S^{2m})}\Vol(\rho)\in\ZZ$.

In the general case, let $p_i\colon C'_i\to C_i$ be a covering of degree $B_{2m-1}$ that is a torus.
Then
\bqn
\ba
B_{2m-1}\langle(\delta^\mathrm{b})^{-1}(\rho^*(\vemb)|_{C_i}),[C_i]\rangle
=&\langle(\delta^\mathrm{b})^{-1}(\rho^*(\vemb)|_{C_i}),p_{i*}([C'_i])\rangle\\
=&\langle(\delta^\mathrm{b})^{-1}p_i^*(\rho^*(\vemb)|_{C_i}),[C'_i]\rangle\,.
\ea
\eqn
Now observe that $p_i^*(\rho^*(\vemb)|_{C_i})\in\hb^{2m}(C'_i,\ZZ)$
corresponds to the class
\bqn
(\rho\circ p_{i*})^*(\vemb)=(\rho|_{\pi_1(C'_i))}^*(\vemb)\in\hb^{2m}(\pi_1(C'_i),\ZZ)\,,
\eqn
which vanishes by Theorem~\ref{thm:vanishing}.
\end{proof}

\section{Examples of nontrivial and non-maximal representations}\label{sec:examples}
In this section we give examples of volumes of representations.  More precisely:
\be
\item[--] In \S~\ref{subsec:5.1} we set ourselves in dimension $3$.  Here we show in particular 
that the volume of a Dehn filling of a finite volume hyperbolic manifold
coincides with the volume of the filling representation. In fact Proposition~\ref{prop: vol(rho)=vol(rho0)}
deals with a more general case.
\item[--] In \S~\ref{subsec:5.2}, by glueing appropriately copies of a hyperbolic manifold of arbitrary dimensions 
with totally geodesic boundary, we construct manifolds $M_k$ and representations of $\pi_1(M_k)$ 
whose volume is a rational multiple of $\Vol(M_k)$.
\ee

\subsection{Dimension $3$: representations given by Dehn filling}\label{subsec:5.1}

Let $M$ be a complete finite volume hyperbolic $3$-manifold, which, for simplicity, 
we assume has only one cusp. 
If $N$ is a compact core of $M$, its boundary $\partial N$ is Euclidean with the induced metric
and hence there is an isometry $\varphi\colon \partial N\to\TT^2$ to a two-dimensional torus 
for an appropriate flat metric on $\TT^2$.  We obtain then a decomposition of $M$ as a connected sum
\bqn
M=N\#(\TT^2\times\RR_{\geq0})\,,
\eqn
where the identification is via $\varphi$.
We are now going to fill in a solid two-torus to obtain a compact manifold.
To this end, let $\tau\subset\partial N$ be a simple closed geodesic and let us choose a diffeomorphism
$\varphi_\tau\colon \partial N\to S^1\times S^1$, in such a way that $\varphi_\tau(\tau)=S^1\times\{*\}$.
Then $M_\tau$ is the connected sum
\bqn
M_\tau\colon =N\#(\DD^2\times S^1)\,,
\eqn
identified via $\varphi_\tau$.

%

Denote by $j_\tau\colon N\hookrightarrow M_\tau $ the canonical inclusion and 
by $p\colon M\rightarrow N$ the canonical projection given by the cusp retraction 
$\TT^2\times \RR_{>0} \rightarrow \TT^2 $. 
The composition 
\bqn
f_\tau= j_\tau \circ p\colon  M \longrightarrow M_\tau
\eqn
induces a map 
\bqn
(f_\tau)_*\colon \Gamma \longrightarrow \Gamma_\tau
\eqn
between the fundamental groups $\Gamma=\pi_1(M)$ and $\Gamma_\tau=\pi_1(M_\tau)$.

\begin{prop} \label{prop: vol(rho)=vol(rho0)} Let $M_\tau$ be the compact $3$-manifold obtained
by Dehn filling from the hyperbolic $3$-manifold $M$ with one cusp.
Let $\rho\colon \Gamma_\tau\rightarrow \mathrm{SO}(3,1)$ 
be any representation of $\Gamma_\tau$ and let  $\rho_\tau\colon =\rho\circ f_\tau\colon \Gamma\to\mathrm{SO}(3,1)$.
Then
\bqn
\Vol(\rho_\tau)=\Vol(\rho)\,.
\eqn
\end{prop}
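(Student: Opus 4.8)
The plan is to use that $f_\tau$ is a degree-one map in a relative sense and that $\Vol$ is the evaluation of a pulled-back bounded class against a fundamental class; naturality of this pairing under $f_\tau$ should then force the two volumes to coincide, the only delicate point being the bookkeeping of the relative-versus-absolute isomorphisms in bounded cohomology. Since $M_\tau$ is a closed oriented $3$-manifold and $\hb^3(M_\tau,\RR)\cong\hb^3(\Gamma_\tau,\RR)$ (Gromov), set $\alpha:=\rho^*(\omega_3^{\mathrm b})\in\hb^3(M_\tau,\RR)$; then, via the comparison map, $\Vol(\rho)=\langle\alpha,[M_\tau]\rangle$. On the other side, the inclusion $\iota_N:N\hookrightarrow M$ satisfies $p\circ\iota_N\simeq\mathrm{id}_N$, and $f_\tau=j_\tau\circ p$, so $\rho_\tau$ viewed on $\pi_1(N)$ equals $\rho\circ(j_\tau)_*$. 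Hence $\rho_\tau^*(\omega_3^{\mathrm b})\in\hb^3(N,\RR)$ is simply $j_\tau^*\alpha=\alpha|_N$, the restriction of $\alpha$ along $j_\tau:N\hookrightarrow M_\tau$, and consequently $\Vol(\rho_\tau)=\langle j^{-1}(j_\tau^*\alpha),[N,\partial N]\rangle$ with $j$ the isometric isomorphism of \eqref{eq:j}.

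Let $V:=\DD^2\times S^1$ be the filling solid torus, so $M_\tau=N\cup_{\partial N}V$ and $\pi_1(V)\cong\ZZ$ is amenable. The same result that produces $j$ then yields an isometric isomorphism $j':\hb^3(M_\tau,V,\RR)\to\hb^3(M_\tau,\RR)$, induced by the map of pairs $(M_\tau,\varnothing)\to(M_\tau,V)$. Since $j_\tau$ is also a map of pairs $(N,\partial N)\to(M_\tau,V)$, functoriality of the relative-to-absolute map makes the square
\bqn
\xymatrix{
\hb^3(M_\tau,V,\RR)\ar[r]^-{j'}\ar[d]_{j_\tau^*}
&\hb^3(M_\tau,\RR)\ar[d]^{j_\tau^*}\\
\hb^3(N,\partial N,\RR)\ar[r]^-{j}
&\hb^3(N,\RR)
}
\eqn
commute. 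Writing $\tilde\alpha:=(j')^{-1}\alpha\in\hb^3(M_\tau,V,\RR)$, commutativity gives $j_\tau^*\alpha=j_\tau^*(j'\tilde\alpha)=j(j_\tau^*\tilde\alpha)$, whence $j^{-1}(j_\tau^*\alpha)=j_\tau^*\tilde\alpha$, and therefore $\Vol(\rho_\tau)=\langle j_\tau^*\tilde\alpha,[N,\partial N]\rangle=\langle\tilde\alpha,(j_\tau)_*[N,\partial N]\rangle$.

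Finally, $j_\tau:(N,\partial N)\to(M_\tau,V)$ is an excision: it identifies $N$ orientation-preservingly with the complement of the interior of $V$, so it carries the relative fundamental class to $(j_\tau)_*[N,\partial N]=[M_\tau,V]$, which is the image $q_*[M_\tau]$ of the fundamental class under $q_*:\h_3(M_\tau,\RR)\to\h_3(M_\tau,V,\RR)$. Using naturality of the pairing and $q^*=j'$, one gets
\bqn
\Vol(\rho_\tau)=\langle\tilde\alpha,q_*[M_\tau]\rangle=\langle q^*\tilde\alpha,[M_\tau]\rangle=\langle j'\tilde\alpha,[M_\tau]\rangle=\langle\alpha,[M_\tau]\rangle=\Vol(\rho)\,.
\eqn

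The main obstacle is precisely this homological--cohomological bookkeeping, which rests on three points: (i) that $\pi_1(V)\cong\ZZ$ is amenable, so that $\hb^3(M_\tau,V,\RR)$ is isometrically isomorphic to $\hb^3(M_\tau,\RR)$ by the same mechanism producing $j$; (ii) the naturality of these relative-to-absolute isomorphisms with respect to the map of pairs $j_\tau$; and (iii) the verification that excision matches the relative fundamental classes, $(j_\tau)_*[N,\partial N]=[M_\tau,V]=q_*[M_\tau]$. Once these are established the conclusion is a formal pairing chase.
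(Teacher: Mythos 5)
Your proof is correct, and it rests on the same geometric skeleton as the paper's argument --- both route the computation through the pair $(M_\tau,\DD^2\times S^1)$, invoke naturality of pullbacks under the inclusion of pairs $(N,\partial N)\hookrightarrow(M_\tau,\DD^2\times S^1)$, and match the fundamental class of $M_\tau$ with the relative class $[N,\partial N]$ --- but the implementation at the key step is genuinely different. The paper descends to \emph{ordinary} cohomology via the comparison map and builds the isomorphism $F=(j_\tau,\varphi_\tau)^*\circ(i^*)^{-1}:\h^3(M_\tau)\to\h^3(N,\partial N)$; there $i^*$ is invertible for elementary reasons (long exact sequence of the pair, since $\h^2(\DD^2\times S^1)=\h^3(\DD^2\times S^1)=0$), the matching of dual fundamental classes is proved by exhibiting compatible triangulations (Lemma~\ref{lem: F isom}), and naturality of all subdiagrams (Lemma~\ref{lem: F commute}) finishes the computation. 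You instead stay in bounded cohomology throughout: your isomorphism $j'$ requires the isometric-isomorphism theorem of \cite{BBFIPP} applied to the pair $(M_\tau,V)$, a second and slightly more general invocation of the amenability machinery, since here $V$ is an interior solid torus rather than a boundary component. That invocation is legitimate --- the theorem only asks that each component of the subspace have amenable fundamental group, with no $\pi_1$-injectivity --- and in fact you need less: only surjectivity of $j'$ is used (to produce $\tilde\alpha$), and that already follows from the bounded long exact sequence of the pair together with $\hb^3(V,\RR)=0$. What your route buys is that the endgame is a purely formal pairing chase, with all fundamental-class bookkeeping concentrated in the standard homological identity $(j_\tau)_*[N,\partial N]=[M_\tau,V]=q_*[M_\tau]$ (which is exactly what the paper's triangulation argument establishes); what the paper's route buys is economy of tools: the only nonelementary input is the isomorphism $j$ already needed to define $\Vol(\rho_\tau)$, everything else taking place in ordinary (co)homology where excision and exact sequences are classical.
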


By Gromov--Thurston $(2\pi)$-Theorem \cite{GromovThurston}, 
for all geodesic curves $\tau$
for which the induced length is greater than $2\pi$ in the induced Euclidean metric on $\partial N$, 
the compact manifold $M_\tau$ admits a hyperbolic structure.
Proposition~\ref{prop:intro} is then an immediate consequence of Proposition~\ref{prop: vol(rho)=vol(rho0)}.


To prove the proposition, recall that by definition, the volume of the representation $\rho_\tau$ is equal to
\bqn
\Vol(\rho_\tau)=\langle c\circ \Psi^{-1} \circ f^* \circ \rho^*(\omega_3^{\mathrm{b}}), [N,\partial N]\rangle\,,
\eqn
where all maps involved can be read in the diagram below. 
We will start by defining a map $F\colon  \h^3(M_\tau)\rightarrow  \h^3(N,\partial N)$ 
that will turn the diagram below into a commutative diagram (Lemma \ref{lem: F commute}) 
and which will induce a canonical isomorphism (Lemma \ref{lem: F isom}). 
\begin{equation}\label{equ: big diag degree 3}
\xymatrix{ 
   \hcb^3(\mathrm{SO}(3,1)) \ar[d]^{c} \ar@/^2pc/[rr]^{\rho_\tau^*}  \ar[r]_-{\rho^*}\ar[d]^{c} 
&{\color{red}{\hb^3(\Gamma_\tau)}}\ar@[red][d]^{\color{red}{c}} \ar@[red][r]_{\color{red}{f_\tau^*}}
&{\color{red}{\hb^3(\Gamma)}}
&{\color{red}{\hb^3(N,\partial N)}}\ar@[red][d]^{\color{red}{c}}  \ar@[red][l]^-{\color{red}{\Psi}}_-{\color{red}{\cong}} \\
\hc^3(\mathrm{SO}(3,1)) \ar[r]_-{\rho^*}
&{\color{red}{\h^3(\Gamma_\tau)}}\ar@[red][r]^{\color{red}{\cong}}_{\color{red}{g}}
&{\color{red}{\h^3(M_\tau)}}\ar@{.>}@[red][r]_-{\color{red}{F\,\,}}
&{\color{red}{\h^3(N,\partial N)}}.}
\end{equation}

The inclusions 
\begin{equation*}
\xymatrix{
M_\tau \ar@{^{(}->}[dr]_i& & (N,\partial N)\ar@{_{(}->}[dl]^{(j_\tau,\varphi_\tau)} \\
&(M_\tau,\mathbb{D}^2\times S^1)&
}
 \end{equation*}
induce the following homology and cohomology maps
\begin{equation}\label{equ: ho can iso}
\xymatrix{
 \h_\bullet(M_\tau,\ZZ)\ar[dr]_-{i_*}
 & &\h _\bullet((N,\partial N),\ZZ)\ar[dl]^{(j_\tau,\varphi_\tau)_*} \\
 &\h_\bullet((M_\tau,\mathbb{D}^2\times S^1),\ZZ)
}
\end{equation}
 and
 \begin{equation}\label{equ: coho can iso}
 \xymatrix{
 \h^\bullet(M_\tau,\ZZ)
 & &\h ^\bullet((N,\partial N),\ZZ) \\
 &\h^\bullet((M_\tau,\mathbb{D}^2\times S^1),\ZZ)\ar[lu]^{i_*}\ar[ur]_{(j_\tau,\varphi_\tau)_*}
}
\end{equation}

 
 \begin{lemma} \label{lem: F isom} In degree $3$ the maps in \eqref{equ: ho can iso} and \eqref{equ: coho can iso} 
 are canonical isomorphisms and the composition 
 \bqn
 F=(j_\tau,\varphi_\tau)^*\circ (i^*)^{-1}\colon  \h^3(M_\tau,\ZZ) \longrightarrow \h^3(N,\partial N ,\ZZ)
 \eqn
 maps the dual $\beta_{M_\tau}$ of the fundamental class of $M_\tau$ 
 to the dual $\beta_{[N,\partial N]}$ of the fundamental class of $(N,\partial N)$.
 \end{lemma}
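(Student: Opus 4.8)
The plan is to analyze the two maps $i^\ast$ and $(j_\tau,\varphi_\tau)^\ast$ in degree $3$ via the long exact sequences of the pairs and the excision/retraction structure of the Dehn filling, and then track what happens to the fundamental classes. The key geometric input is that $M_\tau = N \cup_{\varphi_\tau} (\DD^2\times S^1)$ and that $N$ is a compact $3$-manifold with torus boundary.

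First I would establish that $i^\ast:\h^3((M_\tau,\DD^2\times S^1),\ZZ)\to\h^3(M_\tau,\ZZ)$ is an isomorphism. For this I would write down the long exact sequence of the pair $(M_\tau,\DD^2\times S^1)$,
\bqn
\xymatrix@1{
\h^2(\DD^2\times S^1)\ar[r]
&\h^3(M_\tau,\DD^2\times S^1)\ar[r]^-{i^\ast}
&\h^3(M_\tau)\ar[r]
&\h^3(\DD^2\times S^1)\,,
}
\eqn
and observe that $\DD^2\times S^1$ is homotopy equivalent to $S^1$, so $\h^2(\DD^2\times S^1)=\h^3(\DD^2\times S^1)=0$; exactness then forces $i^\ast$ to be an isomorphism. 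Similarly, for $(j_\tau,\varphi_\tau)^\ast:\h^3((M_\tau,\DD^2\times S^1),\ZZ)\to\h^3((N,\partial N),\ZZ)$, I would invoke excision: excising the interior of the solid torus from the pair $(M_\tau,\DD^2\times S^1)$ identifies it with $(N,\partial N)$ up to the collar neighborhood, and excision gives the isomorphism on relative cohomology in all degrees, in particular in degree $3$. This handles the first assertion, that both maps in the diagrams are isomorphisms in degree $3$, hence $F$ is a well-defined isomorphism.

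Next I would verify the compatibility on fundamental classes, which is cleaner to do in homology and then dualize. The homology diagram \eqref{equ: ho can iso} has $i_\ast$ and $(j_\tau,\varphi_\tau)_\ast$ both isomorphisms in degree $3$ by the identical long-exact-sequence and excision arguments. The fundamental class $[M_\tau]\in\h_3(M_\tau,\ZZ)$ maps under $i_\ast$ to the relative fundamental class of $(M_\tau,\DD^2\times S^1)$, and this in turn is the image under $(j_\tau,\varphi_\tau)_\ast$ of the relative fundamental class $[N,\partial N]$; this is precisely the statement that orientations are compatible under gluing, which holds because $\varphi_\tau$ is an orientation-preserving identification of the gluing torus. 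Dualizing via the Kronecker pairing (or equivalently Lefschetz duality, which turns these fundamental classes into the dual classes $\beta_{M_\tau}$ and $\beta_{[N,\partial N]}$), the relation $i_\ast[M_\tau]=(j_\tau,\varphi_\tau)_\ast[N,\partial N]$ translates into $F(\beta_{M_\tau})=\beta_{[N,\partial N]}$.

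The main obstacle I anticipate is not any single cohomology computation but rather bookkeeping the orientation conventions precisely enough that the fundamental classes match on the nose, with no sign or coboundary ambiguity. In particular one must be careful that the retraction $p$ and the inclusion $j_\tau$ induce compatible orientations on $\partial N$ and on the boundary of the solid torus, and that the duality isomorphisms are normalized so that the dual of a fundamental class is genuinely sent to the dual of a fundamental class rather than a nonzero multiple. Since all the maps in sight are isomorphisms of groups abstractly isomorphic to $\ZZ$, the only content is this coherence of the chosen generators; once the orientation-preserving property of $\varphi_\tau$ is fixed, the naturality of excision and of the long exact sequence of a pair makes the verification routine.
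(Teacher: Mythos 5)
Your proof is correct, and its overall skeleton --- reduce to the homological statement, identify the fundamental classes, then dualize via the Kronecker pairing --- matches the paper's. Where you diverge is in the mechanism for the key identity $i_*[M_\tau]=(j_\tau,\varphi_\tau)_*[N,\partial N]$ in $\h_3((M_\tau,\DD^2\times S^1),\ZZ)$. The paper proves it by constructing compatible triangulations: triangulate the gluing torus $\partial N$, extend that triangulation on one side over $\DD^2\times S^1$ and on the other over $N$ (citing Munkres), so that the fundamental cycles of $M_\tau$, of $(M_\tau,\DD^2\times S^1)$ and of $(N,\partial N)$ are represented by literally the same simplices, those lying in the solid torus dying in the relative group. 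You instead obtain the isomorphism assertions from the long exact sequence of the pair (using $\DD^2\times S^1\simeq S^1$) together with excision (modulo the usual collar trick, which you correctly flag), and the fundamental-class identity from the standard characterization of relative fundamental classes by local orientations at interior points. Both are sound; your route has the merit of making explicit the isomorphism claims that the paper's proof leaves implicit and of resting only on standard algebraic topology, while the paper's triangulation argument produces explicit compatible cycle representatives, in the same spirit as the triangulation $t$ used to evaluate cocycles in the proof of Theorem~\ref{thm: vol modulo boundary}. One point worth spelling out in your dualization step: for the pairing argument to pin down $F(\beta_{M_\tau})$ exactly, you need the Kronecker pairing with $[N,\partial N]$ to detect the generator of $\h^3((N,\partial N),\ZZ)$; this holds because $\h_2((N,\partial N),\ZZ)\cong\h^1(N,\ZZ)$ is torsion-free, so the Ext term in universal coefficients vanishes, and then $\langle F(\beta_{M_\tau}),[N,\partial N]\rangle=1$ indeed forces $F(\beta_{M_\tau})=\beta_{[N,\partial N]}$.
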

 
 \begin{proof} It is enough to show the statement in homology 
 where we show that fundamental classes are mapped to each other 
 by showing the existence of a compatible triangulation of the three manifolds. 
 Start with a triangulation of the boundary torus $S^1\times S^1 =\partial N$, 
 extend it on the one hand to the filled torus $\mathbb{D}^2\times S^1$ and on the other hand to $N$,
 \cite[Theorem 10.6]{Munkres}. 
 This produces compatible triangulations representing $[M_\tau]$,  
 $[M_\tau,\mathbb{D}^2\times S^1]$ and $[N,\partial N]$.
 \end{proof}

\begin{lemma}\label{lem: F commute} The diagram (\ref{equ: big diag degree 3}) commutes.
\end{lemma}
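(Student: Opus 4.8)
The plan is to check the commutativity of \eqref{equ: big diag degree 3} cell by cell, isolating the formal squares — those built only from the comparison map $c$ and pullback homomorphisms — from the single square in which the geometric definition of $F$ really enters. The leftmost square commutes by naturality of the comparison map $c$ with respect to the homomorphism $\rho\colon\Gamma_\tau\to\mathrm{SO}(3,1)$, so that $c\circ\rho^*=\rho^*\circ c$; together with $c(\omega_3^{\mathrm b})=\omega_3$ this identifies the image of $\omega_3^{\mathrm b}$ along the bottom-left with the ordinary pullback $\rho^*(\omega_3)$ of the volume class. The curved arrow records only the definition $\rho_\tau^*=f_\tau^*\circ\rho^*$ and requires nothing.

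The substance lies in the right-hand square, where I must establish
\[
c\circ\Psi^{-1}\circ f_\tau^*\;=\;F\circ g\circ c\colon\ \hb^3(\Gamma_\tau)\longrightarrow\h^3(N,\partial N).
\]
First I would lift everything to the level of spaces and of relative bounded cohomology. Since $f_\tau=j_\tau\circ p$ with $p\colon M\to N$ a homotopy equivalence, and since bounded cohomology depends only on the fundamental group, the map $f_\tau^*$ is the space-level pullback $p^*\circ j_\tau^*$. Both filling pieces $\DD^2\times S^1$ and $\partial N=\TT^2$ have amenable fundamental group, so the relative-to-absolute maps in bounded cohomology — the map $\Psi$ for $(N,\partial N)$ and its analogue for $(M_\tau,\DD^2\times S^1)$ — are isometric isomorphisms, $\Psi$ being the composite of this isomorphism with the identification $\hb^3(N)\cong\hb^3(\Gamma)$. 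Using functoriality of bounded cohomology for the maps of pairs $i$ and $(j_\tau,\varphi_\tau)$ that define $F$, together with these amenability isomorphisms, I would show that $\Psi^{-1}\circ f_\tau^*$ agrees with the bounded-cohomology analogue $(j_\tau,\varphi_\tau)^*\circ(i^*)^{-1}$ of $F\circ g$, all four maps being induced by the same inclusions of pairs. Finally, naturality of $c$ — with respect to the pullbacks $(j_\tau,\varphi_\tau)^*$, $i^*$, and the relative-bounded isomorphisms, exactly the compatibilities encoded in the big diagram of the proof of Theorem~\ref{thm: vol modulo boundary} — carries this bounded identity down to the ordinary row and yields $F\circ g\circ c$, with $c$ sending the bounded $g$ to the ordinary $g$.

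The main obstacle I expect is precisely this last compatibility: making rigorous that the cohomological isomorphisms $g$ and $\Psi$, defined through classifying maps and the amenability (bounded) isomorphisms, are literally induced by the geometric inclusions $i$ and $(j_\tau,\varphi_\tau)$ entering the definition of $F$, and that the comparison map $c$ intertwines the relative-bounded isomorphisms on $(N,\partial N)$ and on $(M_\tau,\DD^2\times S^1)$ with their ordinary-cohomology counterparts. Once this is in place the remaining squares follow by pure naturality. As a cross-check — and an alternative route avoiding the functorial bookkeeping — I note that in degree $3$ every group in the square is infinite cyclic (or one-dimensional over $\RR$), generated by the dual of a fundamental class; one may therefore verify the identity by evaluating both composites on $[N,\partial N]$ and invoking Lemma~\ref{lem: F isom}, which reduces the whole matter to the compatibility of fundamental classes under a common triangulation.
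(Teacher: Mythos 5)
Your main argument is correct and is essentially the paper's proof: you decompose the right rectangle into naturality squares relating the bounded and ordinary pullbacks along $p$, $j_\tau$, $i$ and $(j_\tau,\varphi_\tau)$, identify $\Psi$ and the bounded analogue of $(i^*)^{-1}$ via the amenability isomorphisms, and conclude by naturality of the comparison map $c$ — exactly the subdiagram decomposition the paper uses. One caveat on your closing cross-check: the bounded groups $\hb^3(\Gamma_\tau)$, $\hb^3(\Gamma)$, $\hb^3(N,\partial N)$ are not one-dimensional (for hyperbolic $3$-manifold groups they are infinite-dimensional), so only the target $\h^3(N,\partial N,\RR)$ is a line and the evaluation route would still require the same naturality bookkeeping; since this was offered only as an aside, it does not affect your proof.
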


\begin{proof} We only need to show that the right rectangle commutes. For this, we will decompose the diagram in subdiagrams as follows:
\bqn
\xymatrix{ 
  {\color{red}\hb^3(\Gamma)}\ar[r]^\cong
&\hb^3(M)
&\hb^3(N)\ar[l]_\cong^{p^*}
& \\ 
{\color{red}\hb^3(\Gamma_\tau)}\ar@[red][u]^{\color{red}{f^*}}\ar[r]^\cong \ar@[red][dd]_{\color{red}{c}}
&\hb^3(M_\tau) \ar[u]^{f^*}\ar[ru]_-{j_\tau^*} \ar[dd]_{c}
&  
&{\color{red} \hb^3(N,\partial N)} \ar@[red]@/_5.3pc/[lllu]^{\color{red}{\Psi}} \ar[lu]_-{i_{| N}^*}\ \ar@[red][dd]^{\color{red}{c}}\\
& 
&\hb^3(M_\tau,\mathbb{D}^2\times S^1)  \ar[dd]^<<<<<<{c}\ar[lu]_-{i^*}\ar[ur]^-{(j_\tau,\varphi_\tau)^*}
& \\
{\color{red}\h^3(\Gamma_\tau)}\ar@[red][r]^{\color{red}\cong}_{\color{red}g}
&{\color{red}{ \h^3(M_\tau)} }\ar@[red]@{->}'[r][rr]^<<<<<<<<<<<<<{\color{red}{ F}}
&  
& {\color{red}\h^3(N,\partial N)}\\
&
&\h^3(M_\tau,\mathbb{D}^2\times S^1) \ar[lu]_-{i^*}\ar[ru]^-{(j_\tau,\varphi_\tau)^*}\,.
&
}
\eqn
Since by naturality, all subdiagrams commute, the lemma follows. 
\end{proof}

\begin{proof}[Proof of Proposition \ref{prop: vol(rho)=vol(rho0)}] 
Using the commutativity of the diagram (\ref{equ: big diag degree 3}) and the fact that 
$ c (\omega_3^{\mathrm b})= \omega_3$ and 
$g\circ\rho^*(\omega_3)=\Vol(\rho) \cdot \beta_{M_\tau}$ we compute
\begin{eqnarray*} 
c\circ \Psi^{-1}\circ f_\tau^*\circ \rho^*(\omega_3^{\mathrm b})
&=&  F\circ g\circ \rho^*\circ c (\omega_3^{\mathrm b})\\
&=& F\circ g\circ \rho^*(\omega_3)\\
&=& F(\Vol(\rho) \cdot \beta_{M_\tau}) \\
&=& \Vol(\rho) \cdot \beta_{[N,\partial N]}.
\end{eqnarray*}
It is immediate that
\bqn
\Vol(\rho_\tau)=\langle c\circ \Psi^{-1} \circ f^* \circ \rho^*(\omega_3)^{\mathrm{b}}, [N,\partial N] \rangle 
= \langle  \Vol(\rho) \cdot \beta_{[N,\partial N]}, [N,\partial N]\rangle =\Vol(\rho)\,,
\eqn
which finishes the proof of the proposition.
\end{proof}

\subsection{Representations giving rational multiples of the maximal representation}\label{subsec:5.2}

Let $M$ be a $n$-dimensional hyperbolic manifold with nonempy totally geodesic boundary 
(possibly with cusps) (see for example \cite{Millson}).
Suppose that the boundary of $M$ has at least two connected components, 
which can be achieved by taking appropriate coverings of a manifold with a connected boundary.
Decompose $\partial M=C_0 \sqcup C_1$. Let $M'$ be the double of $M$ along $C_1$. 
Observe that $M'$ has as boundary two copies of $C_0$ with opposite orientation. 
Glueing these two copies, we obtain a complete hyperbolic manifold $M_1$. 
We can repeat the procedure as follows: Take $k$ copies of $M'$, 
glue the two copies of $C_0$ two by two so as to obtain a connected closed hyperbolic manifold $M_k$. 
Observe that $\Vol(M_k)=2k\,\Vol(M)$. 

\vskip1cm
\begin{tikzpicture}
[scale=.6]
\draw (-3.4,1.8) arc (90:270: .7cm and 1.8cm);						\draw (3.4,1.8) arc (90:270: .7cm and 1.8cm);
 \draw [xshift=2cm](6.5,1.8) arc (90:270: .7cm and 1.8cm);						\draw[xshift=2cm] (13.3,1.8) arc (90:270: .7cm and 1.8cm);
\draw[dashed] (-3.4,-1.8) arc (-90:90: .7cm and 1.8cm);			\draw (3.4,-1.8) arc (-90:90: .7cm and 1.8cm);
\draw[dashed] [xshift=2cm](6.5,-1.8) arc (-90:90: .7cm and 1.8cm);			\draw [xshift=2cm](13.3,-1.8) arc (-90:90: .7cm and 1.8cm);
\draw[dotted, thick] (-3.5,.2) arc (-100:100: .3cm and .6cm); 						\draw (3.4,.2) arc (-100:100: .3cm and .6cm); 
\draw[dotted, thick] [xshift=2cm](6.5,.2) arc (-100:100: .3cm and .6cm); 						\draw [xshift=2cm](13.3,.2) arc (-100:100: .3cm and .6cm); 
\draw[dotted, thick] (-3.4,1.1) arc (80:280: .1cm and .3cm);						\draw (3.4,1.1) arc (80:280: .1cm and .3cm);
\draw[dotted, thick] [xshift=2cm](6.5,1.1) arc (80:280: .1cm and .3cm);						\draw [xshift=2cm](13.3,1.1) arc (80:280: .1cm and .3cm);
\draw[dotted, thick] (-3.5,-1.4) arc (-100:100: .3cm and .6cm); 					\draw (3.4,-1.4) arc (-100:100: .3cm and .6cm); 
\draw[dotted, thick] [xshift=2cm](6.5,-1.4) arc (-100:100: .3cm and .6cm); 					\draw [xshift=2cm](13.3,-1.4) arc (-100:100: .3cm and .6cm); 
\draw[dotted, thick] (-3.4,-.5) arc (80:280: .1cm and .3cm);							\draw (3.4,-.5) arc (80:280: .1cm and .3cm);
\draw[dotted, thick] [xshift=2cm](6.5,-.5) arc (80:280: .1cm and .3cm);								\draw [xshift=2cm](13.3,-.5) arc (80:280: .1cm and .3cm);
\draw (-.8,2) ellipse (.5cm and 1.5cm);									\draw[dashed] (.8,3.5) arc (90:-90: .5cm and 1.5cm);
																							\draw (.85,3.5) arc (90:270: .5cm and 1.5cm);
\draw [xshift=2cm](9.1,2) ellipse (.5cm and 1.5cm);									\draw[dashed] [xshift=2cm](10.7,3.5) arc (90:-90: .5cm and 1.5cm);
																							\draw [xshift=2cm](10.7,3.5) arc (90:270: .5cm and 1.5cm);
\draw (-.85,2.1) arc (-100:100:.2cm and .5cm);						\draw[dotted, thick] (.75,2.1) arc (-100:100:.2cm and .5cm);
\draw [xshift=2cm](9.05,2.1) arc (-100:100:.2cm and .5cm);						\draw[dotted, thick] [xshift=2cm](10.7,2.1) arc (-100:100:.2cm and .5cm);
\draw (-.8,2.9) arc (80:280:.1cm and .3cm);							\draw[dotted, thick] (.8,2.9) arc (80:280:.1cm and .3cm);
\draw [xshift=2cm](9.1,2.9) arc (80:280:.1cm and .3cm);							\draw[dotted, thick] [xshift=2cm](10.7,2.9) arc (80:280:.1cm and .3cm);
\draw (-.85,.8) arc (-100:100: .2cm and .5cm);						\draw[dotted, thick] (.75,.8) arc (-100:100: .2cm and .5cm);
\draw [xshift=2cm](9.05,.8) arc (-100:100: .2cm and .5cm);						\draw[dotted, thick][xshift=2cm] (10.65,.8) arc (-100:100: .2cm and .5cm);
\draw (-.8,1.6) arc (80:280:.1cm and .3cm);							\draw[dotted, thick] (.8,1.6) arc (80:280:.1cm and .3cm);
\draw [xshift=2cm](9.1,1.6) arc (80:280:.1cm and .3cm);							\draw[dotted, thick] [xshift=2cm](10.7,1.6) arc (80:280:.1cm and .3cm);
\draw (-.8,-2) ellipse (.5cm and 1.6cm);									\draw[dashed] (.8,-.4) arc (90:-90:.5cm and 1.6cm);
																							\draw (.8,-.4) arc (90:270:.5cm and 1.6cm);
\draw [xshift=2cm](9.1,-2) ellipse (.5cm and 1.6cm);								\draw[dashed] [xshift=2cm](10.7,-.4) arc (90:-90:.5cm and 1.6cm);
																							\draw [xshift=2cm](10.7,-.4) arc (90:270:.5cm and 1.6cm);
\draw (-.85,-1.35) arc (-100:100:.2cm and .35cm);					\draw[dotted, thick] (.8,-1.35) arc (-100:100:.2cm and .35cm);
\draw [xshift=2cm](9.05,-1.35) arc (-100:100:.2cm and .35cm);				\draw[dotted, thick] [xshift=2cm](10.7,-1.35) arc (-100:100:.2cm and .35cm);
\draw (-.8,-.8) arc (80:280:.1cm and .2cm);							\draw[dotted, thick] (.8,-.8) arc (80:280:.1cm and .2cm);
\draw [xshift=2cm](9.1,-.8) arc (80:280:.1cm and .2cm);							\draw[dotted, thick] [xshift=2cm](10.7,-.8) arc (80:280:.1cm and .2cm);
\draw (-.85,-2.35) arc (-100:100:.2cm and .35cm);					\draw[dotted, thick] (.8,-2.35) arc (-100:100:.2cm and .35cm);
\draw [xshift=2cm](9.05,-2.35) arc (-100:100:.2cm and .35cm);				\draw[dotted, thick] [xshift=2cm](10.7,-2.35) arc (-100:100:.2cm and .35cm);
\draw (-.8,-1.8) arc (80:280:.1cm and .2cm);							\draw[dotted, thick] (.8,-1.8) arc (80:280:.1cm and .2cm);
\draw [xshift=2cm](9.1,-1.8) arc (80:280:.1cm and .2cm);							\draw[dotted, thick] [xshift=2cm](10.7,-1.8) arc (80:280:.1cm and .2cm);
\draw (-.85,-3.35) arc (-100:100:.2cm and .35cm);					\draw[dotted, thick] (.8,-3.35) arc (-100:100:.2cm and .35cm);
\draw [xshift=2cm](9.05,-3.35) arc (-100:100:.2cm and .35cm);				\draw[dotted, thick] [xshift=2cm](10.7,-3.35) arc (-100:100:.2cm and .35cm);
\draw (-.8,-2.8) arc (80:280:.1cm and .2cm);							\draw[dotted, thick] (.8,-2.8) arc (80:280:.1cm and .2cm);
\draw [xshift=2cm](9.1,-2.8) arc (80:280:.1cm and .2cm);							\draw[dotted, thick] [xshift=2cm](10.7,-2.8) arc (80:280:.1cm and .2cm);
\draw (-3.4,1.8) .. controls (-2.8, 1.8) and (-2.4, 2) .. (-2.1,2.65);\draw (3.4,1.8) .. controls (2.8, 1.8) and (2.4, 2) .. (2.1,2.65);
\draw [xshift=2cm](6.5,1.8) .. controls (7.1, 1.8) and (7.5, 2) .. (7.8,2.65);\draw [xshift=2cm](13.3,1.8) .. controls (12.7, 1.8) and (12.3, 2) .. (12,2.65);
\draw (-2.1,2.65) ..controls (-1.6,3.5) and (-1.2,3.5) .. (-.8,3.5);\draw (2.1,2.65) ..controls (1.6,3.5) and (1.2,3.5) .. (.8,3.5);
\draw [xshift=2cm](7.8,2.65) ..controls (8.3,3.5) and (8.7,3.5) .. (9.1,3.5);\draw[xshift=2cm] (12,2.65) ..controls (11.5,3.5) and (11.1,3.5) .. (10.7,3.5);
\draw (-3.4,-1.8) .. controls (-2.8,-1.8) and (-2.4,-2) ..(-2.1,-2.7);\draw (3.4,-1.8) .. controls (2.8,-1.8) and (2.4,-2) ..(2.1,-2.7);
\draw [xshift=2cm](6.5,-1.8) .. controls (7.1,-1.8) and (7.5,-2) ..(7.8,-2.7);\draw [xshift=2cm](13.3,-1.8) .. controls (12.7,-1.8) and (12.3,-2) ..(12,-2.7);
\draw (-2.1,-2.7) .. controls (-1.6,-3.5) and (-1.2,-3.5) .. (-.8,-3.6);\draw (2.1,-2.7) .. controls (1.6,-3.5) and (1.2,-3.5) .. (.8,-3.6);
\draw [xshift=2cm](7.8,-2.7) .. controls (8.3,-3.5) and (8.7,-3.5) .. (9.1,-3.6);\draw [xshift=2cm](12,-2.7) .. controls (11.5,-3.5) and (11.1,-3.5) .. (10.7,-3.6);
\draw (-0.8,0.5) .. controls (-1.1,0.5) and (-1.1,-0.4) .. (-0.8, -0.4);\draw (0.8,0.5) .. controls (1.1,0.5) and (1.1,-0.4) .. (0.8, -0.4);
\draw [xshift=2cm](9.1,0.5) .. controls (8.8,0.5) and (8.8,-0.4) .. (9.1, -0.4);\draw[xshift=2cm] (10.7,0.5) .. controls (11,0.5) and (11,-0.4) .. (10.7, -0.4);
%
%
%
\draw[<->] (-.27,2) -- (.28,2); 
\draw[<->] (-.27,-2) -- (.28,-2); 
\draw[<-] (4.5,0) -- (5,0);
\draw[dashed] (5,0) -- (7,0);
\draw[->] (7,0) -- (7.5,0);
\draw[<->] (11.63,2) -- (12.18,2); 
\draw[<->] (11.63,-2) -- (12.18,-2); 
\draw[<->]	(-4.5,0)	.. controls (-5,0) and (-6,0) .. (-6, -2) 
							.. controls (-4,-10) and (16,-10) .. (18,-2) 
							.. controls (18,0) and (17,0) .. (16.5,0);
%
%
%
%
\filldraw (-4.1,-4) circle (2pt);
\draw[thick] (-4.1,-4) -- (0,-4);
\draw[->][thick] (-4.1,-4) -- (-2,-4);
\filldraw (0,-4) circle (2pt);
\draw[thick] (0,-4) -- (4.1,-4);
\draw[<-, thick] (2,-4) -- (4.1,-4);
\filldraw (4.1,-4) circle (2pt);
\draw[<-] (4.5,-4) -- (5,-4);
\draw[dashed] (5,-4) -- (7,-4);
\draw[->] (7,-4) -- (7.5,-4);
\filldraw (7.7,-4) circle (2pt);
\draw[thick] (7.7,-4) -- (11.8,-4);
\draw[->][thick] (7.7,-4) -- (9.7,-4);
\filldraw (11.8,-4) circle (2pt);
\draw[thick] (11.8,-4) -- (15.9,-4);
\draw[<-, thick] (13.8,-4) -- (15.9,-4);
\filldraw (15.9,-4) circle (2pt);
%
%
%
%
\draw (-3.5,2.5) node {$C_0$};
\draw (-1,4) node {$C_1$};
\draw (0,-6.5) node {$M'$};
\draw (-2,0) node {$M$};
\draw (6,6) node {$M_k$};
\draw (-4.1,-4.5) node {$C_0$};
\draw (0,-4.5) node {$C_1$};
\draw (4.1,-4.5) node {$C_0$};
%
%
%
\begin{scope}[yshift=-.3cm]
\draw (-4,4.5) .. controls (-4,5) and (-3,5) .. (-2,5)
				   .. controls (-2,5) and (4.5,5) .. (5,5)
				   .. controls (5.5,5) and (5.7,5.3) .. (6,5.5)
				   .. controls (6.3,5.3) and (6.5,5.1) .. (7,5)
				   .. controls (7.5,5) and (14,5) .. (14.5,5)
				   .. controls (15.5,5) and (16,5) .. (16,4.5);
\end{scope}
%
%
%
\begin{scope}[xshift=2.4cm, yshift=-3.1cm] 
\draw[scale=.41, rotate=180](-4,4.5) .. controls (-4,5) and (-3,5) .. (-2,5)
				   .. controls (-2,5) and (4.5,5) .. (5,5)
				   .. controls (5.5,5) and (5.7,5.3) .. (6,5.5)
				   .. controls (6.3,5.3) and (6.5,5.1) .. (7,5)
				   .. controls (7.5,5) and (14,5) .. (14.5,5)
				   .. controls (15.5,5) and (16,5) .. (16,4.5);
\end{scope}
\end{tikzpicture}

For any $\ell <k$, there are degree one maps $f\colon M_k \rightarrow M_{k-\ell}$ 
obtained by folding $\ell$ copies of $M'$ in $M_k$ along its boundary. These maps send the last $\ell+1$ copies of $M'$ inside $M_k$ to the last copy of $M'$ in $M_{k-\ell}$ as illustrated in the following picture for $\ell=2$:

\begin{tikzpicture}
[scale=.9]
\draw (-6,0) -- (-2,0);
\draw[dashed] (-2,0) -- (0,0);
\draw (0,0) -- (6,0);
\foreach \x in {-6,-4,-2,0,2,4,6}
\filldraw (\x,0) circle (2pt);
\foreach \y in {-5,-3,1,3,5}
\filldraw (\y,0) circle (1pt);
\foreach \z in {-6,-4,0,2,4}
\draw[->] (\z,0) -- (\z+.5,0);
\foreach \q in {-4,-2,2,4,6}
\draw[->] (\q,0) -- (\q-.5,0);
\draw (-8,.5) node {$M_k$};
\foreach \x in {-6,-4,0,2,4,6}
\draw (\x,.5) node {$C_0$};
\foreach \y in {-5,1,3,5}
\draw (\y,.5) node {$\tiny{C_1}$};
\draw (-6,-2) -- (-2,-2);
\draw[dashed] (-2,-2) -- (0,-2);
\draw (0,-2) -- (2,-2);
\foreach \x in {-6,-4,-2,0,2}
\filldraw (\x,-2) circle (2pt);
\foreach \y in {-5,-3,1}
\filldraw (\y,-2) circle (1pt);
\draw (2,-2) -- (0,-2.5) -- (2,-2.7);
\filldraw (0,-2.5) circle (2pt);
\filldraw (2,-2.7) circle (2pt);
\filldraw (1,-2.25) circle (1pt);
\filldraw (1,-2.6) circle (1pt);
\foreach \z in {-6,-4,0}
\draw[->] (\z,-2) -- (\z+.5,-2);
\draw[->] (0,-2.5) -- (.5,-2.375);
\draw[->] (0,-2.5) -- (.5,-2.55);
\foreach \q in {-4,-2,2}
\draw[->] (\q,-2) -- (\q-.5,-2);
\draw[->] (2,-2) -- (1.5,-2.125);
\draw[->] (2,-2.7) -- (1.5,-2.65);
\foreach \x in {-6,-4,0,2}
\draw (\x,-1.5) node {$C_0$};
\foreach \y in {-5,1}
\draw (\y,-1.5) node {$\tiny{C_1}$};
\draw (-6,-4) -- (-2,-4);
\draw[dashed] (-2,-4) -- (0,-4);
\draw (0,-4) -- (2,-4);
\foreach \x in {-6,-4,-2,0,2}
\filldraw (\x,-4) circle (2pt);
\foreach \y in {-5,-3,1}
\filldraw (\y,-4) circle (1pt);
\foreach \z in {-6,-4,0}
\draw[->] (\z,-4) -- (\z+.5,-4);
\foreach \q in {-4,-2,2}
\draw[->] (\q,-4) -- (\q-.5,-4);
\draw (-8,-4.5) node {$M_{k-2}$};
\foreach \x in {-6,-4,0,2}
\draw (\x,-4.5) node {$C_0$};
\foreach \y in {-5,1}
\draw (\y,-4.5) node {$\tiny{C_1}$};
\draw[->] (-8,0) -- (-8,-4);
\end{tikzpicture}

The induced representation of $\pi_1(M_k)$ obtained by the induced map on fundamental groups 
composed with the lattice embedding of $\pi_1(M_{k-\ell})$ in $\mathrm{Isom}(\mathbb{H}^n)$ 
has volume equal to the volume of $M_{k-\ell}$, that is $(k-\ell)/k$ times the volume of the maximal representation.

\appendix

\section{On the continuity of the volume of a representation}\label{sec:continuity}

The goal of this section is to prove the following:

\begin{prop}\label{thm:cont} Let $\Gamma<\isom(\HH^n)$ be any torsion-free lattice. 
The function 
\bqn
\ba
\hom(\Gamma, \mathrm{Isom}(\mathbb{H}^n))&\longrightarrow \quad\RR\\
\rho\qquad\quad&\longmapsto \Vol(\rho)
\ea
\eqn
is continuous.
\end{prop}

We begin with some preliminaries.
Let $G$ be a locally compact group, $\Gamma<G$ a lattice and $L$ a locally compact group. 
We denote by $\mathrm{C}_\mathrm{b}(X)$ the continuous bounded real valued functions on a topological space $X$. 
We define a map 
\bqn
\begin{array}{rccl}
&\mathrm{C}_\mathrm{b}(L^{n+1})\times \mathrm{Rep}(\Gamma,L)&\longrightarrow &\mathrm{C}_\mathrm{b}(\Gamma^{n+1})\\
&(c,\pi)&\longmapsto &\quad\pi^*( c ), 
\end{array}
\eqn
where 
\bqn
\pi^*( c )(\gamma_0,\dots,\gamma_n)=c(\pi(\gamma_0),\dots,\pi(\gamma_n))\,.
\eqn
We endow $\mathrm{C}_\mathrm{b}(L^{n+1})$ with the topology of uniform convergence and 
$\mathrm{C}_\mathrm{b}(\Gamma^{n+1})$ with the topology of pointwise convergence with control of norms:
in other words $\alpha_n\to\alpha$ in $\mathrm{C}_\mathrm{b}(\Gamma^{n+1})$ 
if it converges pointwise and $\sup_n\|\alpha_n\|_\infty<\infty$.
With these topologies and with the pointwise convergence topology on $\mathrm{Rep}(\Gamma,L)$
the above map is continuous.

We proceed to implement the transfer from $\Gamma$ to $G$. 
For this, let $s\colon \Gamma\setminus G\rightarrow G$ be a Borel section and $r\colon G\rightarrow \Gamma$ be defined by
\bqn
g=r(g)\cdot s(p(g))\,,
\eqn
where $p\colon G\rightarrow \Gamma\setminus G$ denotes the canonical projection. 
Given a $\Gamma$-invariant cochain $\alpha\in \mathrm{C}_\mathrm{b}(\Gamma^{n+1})^\Gamma$, 
define
\bqn
T\alpha(g_0,\dots,g_n)=\int_{\Gamma\setminus G} \alpha(r(gg_0),\dots,r(gg_n))d\mu(g)\,,
\eqn
where $\mu$ is the Haar measure on $G$ normalized so that $\mu(\Gamma\setminus G)=1$. 

\begin{prop} Suppose that the Borel section has the property that images of compact subsets are precompact. Then
\begin{enumerate}
\item $T\alpha$ is continuous, hence $T\alpha\in \mathrm{C}_\mathrm{b}(G^{n+1})^G$,
\item $T\colon  \mathrm{C}_\mathrm{b}(\Gamma^{n+1})^\Gamma \rightarrow \mathrm{C}_\mathrm{b}(G^{n+1})^G$ 
is continuous for pointwise convergence on $\Gamma$ 
with control of norms and uniform convergence on compact sets on $G^{n+1}$.
\end{enumerate}
\end{prop}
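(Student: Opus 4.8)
The plan is to verify the three defining properties of an element of $\mathrm{C}_\mathrm{b}(G^{n+1})^G$ for $T\alpha$, and then the continuity statement (2), in every case exploiting that $\mu(\Gamma\backslash G)=1$ is finite together with the precompactness hypothesis on $s$. Boundedness is immediate, since $|T\alpha(g_0,\dots,g_n)|\le\|\alpha\|_\infty\,\mu(\Gamma\backslash G)=\|\alpha\|_\infty$. For well-definedness one first checks that $r(\gamma g)=\gamma r(g)$ for $\gamma\in\Gamma$: indeed $p(\gamma g)=p(g)$ forces $s(p(\gamma g))=s(p(g))$, whence $\gamma r(g)s(p(g))=\gamma g=r(\gamma g)s(p(g))$. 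Combined with the $\Gamma$-invariance of $\alpha$ this shows that the (Borel, bounded) integrand $g\mapsto\alpha(r(gg_0),\dots,r(gg_n))$ is left $\Gamma$-invariant, so it descends to $\Gamma\backslash G$ and the integral makes sense. The $G$-invariance $T\alpha(hg_0,\dots,hg_n)=T\alpha(g_0,\dots,g_n)$ then follows from the right $G$-invariance of $\mu$ by the substitution $\bar g\mapsto\bar g h$.

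The crucial point, where the hypothesis on $s$ is used, is the following finiteness lemma. Fix a compact set $Q\subset G^{n+1}$ and a compact set $\bar K\subset\Gamma\backslash G$. For $\mathbf g=(g_0,\dots,g_n)\in Q$ and $g\in s(\bar K)$ I claim that each $r(gg_i)$ ranges over a \emph{finite} subset $\Gamma_0=\Gamma_0(Q,\bar K)\subset\Gamma$. Indeed, writing $\gamma=r(gg_i)=gg_i\,s(p(gg_i))^{-1}$, the point $gg_i$ lies in the compact set $\overline{s(\bar K)}\cdot\mathrm{pr}_i(Q)$, hence $p(gg_i)$ lies in a compact subset of $\Gamma\backslash G$ and, by the precompactness hypothesis, $s(p(gg_i))$ lies in a relatively compact set; thus $\gamma$ lies in a fixed compact subset of $G$, whose intersection with the discrete group $\Gamma$ is finite.

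For the continuity of $T\alpha$ (statement (1)) I would argue by dominated convergence. Given $\mathbf g^{(j)}\to\mathbf g$ inside a compact $Q$, split $\Gamma\backslash G=\bar K\sqcup(\bar K)^c$ with $\mu((\bar K)^c)<\varepsilon$; on the tail the difference of integrands is bounded by $2\|\alpha\|_\infty\varepsilon$, while on $\bar K$ one uses that for $\mu$-almost every $g$ the points $gg_i$ avoid the (null) set of discontinuities of the Borel retraction $r$, so that $r(gg_i^{(j)})=r(gg_i)$ for $j$ large and the integrand tends pointwise to $0$; bounded convergence on the finite-measure set $\bar K$ then closes the estimate. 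Together with the first paragraph this yields $T\alpha\in\mathrm{C}_\mathrm{b}(G^{n+1})^G$.

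Finally, for the continuity of $T$ (statement (2)), suppose $\alpha_k\to\alpha$ pointwise on $\Gamma^{n+1}$ with $C:=\sup_k\|\alpha_k\|_\infty<\infty$, fix a compact $Q\subset G^{n+1}$, and set $\beta_k:=\alpha_k-\alpha$, so that $\beta_k\to0$ pointwise and $\|\beta_k\|_\infty\le 2C$. Splitting $\Gamma\backslash G=\bar K\sqcup(\bar K)^c$ as above, the tail contributes at most $2C\varepsilon$ uniformly in $\mathbf g\in Q$; on $\bar K$, by the finiteness lemma the integrand only evaluates $\beta_k$ on the finite set $\Gamma_0^{n+1}$, so it is bounded by $\sup_{\Gamma_0^{n+1}}|\beta_k|$, which tends to $0$ as $k\to\infty$. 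Hence $\sup_{\mathbf g\in Q}|T\alpha_k(\mathbf g)-T\alpha(\mathbf g)|\le \mu(\Gamma\backslash G)\sup_{\Gamma_0^{n+1}}|\beta_k|+2C\varepsilon$, which is $\le 3C\varepsilon$ for $k$ large; letting $\varepsilon\to0$ gives uniform convergence on $Q$. The main obstacle throughout is the non-cocompactness of $\Gamma\backslash G$: it is precisely the precompactness of images of compact sets under $s$ that reduces the relevant group elements to finitely many on compact pieces, while the finiteness of $\mu$ controls the tail; the one genuinely delicate point is the almost-everywhere convergence in (1), which rests on the discontinuity set of the merely Borel retraction $r$ being $\mu$-null.
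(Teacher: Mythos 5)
Your preliminary checks (boundedness, the identity $r(\gamma g)=\gamma r(g)$, descent of the integrand, $G$-invariance), your finiteness lemma, and your argument for part (2) are sound; indeed your finiteness argument --- writing $r(gg_i)=gg_i\,s(p(gg_i))^{-1}$ and trapping it in the product of a compact set and a precompact set before intersecting with the discrete group $\Gamma$ --- is a slightly more direct version of the lemma in the paper (which instead shows that $F_K=\{\gamma\in\Gamma:\ K\cap\gamma D\neq\emptyset\}$ is finite using that $F\cap D=s(p(F))$ is relatively compact), and part (2) then goes through essentially as in the paper. The genuine gap is in your proof of part (1): you assert that the set of discontinuities of the Borel retraction $r$ is $\mu$-null, and your entire dominated-convergence argument rests on this. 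Nothing in the hypotheses gives it. A Borel section taking compact sets to precompact sets can perfectly well produce an $r$ discontinuous on a set of positive measure: take $G=\RR$, $\Gamma=\ZZ$, let $A\subset[0,\frac{1}{2})$ be a fat Cantor set (closed, nowhere dense, of positive Lebesgue measure), and define $s([x])=x$ for $x\in[0,1)\setminus A$ and $s([x])=x+1$ for $x\in A$. Images of compact sets are bounded, hence precompact, yet $r\equiv 0$ on $[0,1)\setminus A$ and $r\equiv -1$ on $A$, so $r$ is discontinuous at every point of $A$, a set of positive measure. Consequently the claim that the integrand converges for $\mu$-almost every $g$ is simply not available, and your proof of (1) collapses at exactly the point you yourself flag as delicate.

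What is true --- and this is how the paper proves (1) --- is a statement in measure rather than almost everywhere. Using the finite-sum representation (valid by your own finiteness lemma)
\[
\int_C\alpha(r(gg_0),\dots,r(gg_n))\,d\mu(g)=\sum_{\gamma_i\in F_{CC_i}}\alpha(\gamma_0,\dots,\gamma_n)\,\mu\bigl(C\cap\gamma_0Dg_0^{-1}\cap\dots\cap\gamma_nDg_n^{-1}\bigr),
\]
one reduces (1) to the continuity of each coefficient $(g_0,\dots,g_n)\mapsto\mu\bigl(C\cap\gamma_0Dg_0^{-1}\cap\dots\cap\gamma_nDg_n^{-1}\bigr)$, which the paper proves by a telescoping sum together with the estimate
\[
\bigl|\mu(B\cap\gamma Dg^{-1})-\mu(B\cap\gamma Dh^{-1})\bigr|\le\mu\bigl(Dg^{-1}\,\Delta\,Dh^{-1}\bigr)=\|\chi_{Dg^{-1}h}-\chi_D\|_1\longrightarrow 0\quad(h\to g),
\]
i.e.\ the $L^1$-continuity of translations applied to $\chi_D$ (legitimate because $\mu(D)<\infty$). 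If you prefer to stay within your dominated-convergence framework, the correct substitute for your a.e.\ claim is convergence \emph{in measure}: the set $\{g\in C: r(gg_i^{(j)})\neq r(gg_i)\}$ is covered by the finitely many sets $C\cap\gamma Dg_i^{-1}\setminus\gamma D(g_i^{(j)})^{-1}$ with $\gamma\in F_{CC_i}$, so its measure is at most $|F_{CC_i}|\cdot\|\chi_{Dg_i^{-1}g_i^{(j)}}-\chi_D\|_1\to 0$, and bounded convergence in measure on a finite measure space suffices. As written, however, step (1) of your proof is not correct.
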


\begin{proof} Let $D=s(\Gamma\setminus G)$. Choose $\epsilon>0$ and $C\subset D$ compact with $\mu(D\setminus C)<\epsilon$. Then
\bqn
\mid T\alpha(g_0,\dots,g_n)-\int_C \alpha(r(gg_0),\dots,r(gg_n))d\mu(g) \mid < \epsilon \| \alpha \|_\infty\,.
\eqn
Now we write 
\bqn
\int_C \alpha(r(gg_0),\dots,r(gg_n))d\mu(g) = \sum_{\gamma_0,\dots,\gamma_n}\alpha(\gamma_0,\dots,\gamma_n)\mu(C\cap \gamma_0Dg_0^{-1}\cap \dots \cap \gamma_n Dg_n^{-1})\,.
\eqn

Before we continue with the proof of the proposition, 
we need to show that for every compact subset $K\subset G$ the number $F_K$ of translates of the fundamental domain $D$ 
that $K$ intersects is finite: 

\begin{lemma} For any compact subset $K\subset G$, the set
\bqn
F_K:= \{ \gamma \in \Gamma \mid K \cap \gamma D\neq \emptyset\}
\eqn
is finite.  
\end{lemma}

Note that the lemma is wrong for arbitrary fundamental domains, even for cocompact $\Gamma$. 
Indeed, start by writing the standard fundamental domain $(0,1]$ of $\ZZ$ in $\RR$ as
\bqn
D_0=\sqcup_{n=1}^{+\infty} (1/2^n,1/2^{n-1}]\,,
\eqn
and perturb it by translating each of the disjoint interval of $D_0$ by a different translation, 
for example obtaining the new fundamental domain
\bqn
D=\sqcup_{n=1}^{+\infty} n+(1/2^n,1/2^{n-1}]\,.
\eqn
Take as compact set the closed interval $C=[0,1]$. Then for every $-n\leq 0$, the intersection $C\cap (-n+D)$ is nonempty. 

\begin{proof} Set $F:=\cup_{\eta\in \Gamma} \eta K$ and 
observe that $F\cap D=s(p(F))$ is relatively compact by our choice of Borel section. 
Since $\gamma K \cap D= \gamma K \cap (F\cap D)$ and $K$ and $F\cap D$ are relatively compact, 
the lemma follows by the discreteness of $\Gamma$. 
\end{proof}

Going back to the proof of the proposition, fix compact subsets $C_0,\dots,C_n$ of $G$ such that $g_i\in C_i$. 
Observe that $F_{Cg_i}\subset F_{CC_i}$ and if $\gamma_i\in F_{CC_i}\setminus F_{Cg_i}$ 
then the measure of $C\cap \gamma_0Dg_0^{-1}\cap \dots \cap \gamma_n Dg_n^{-1}$ is zero. 
We can thus rewrite the above sum as 
\bqn
\int_C \alpha(r(gg_0),\dots,r(gg_n))d\mu(g) 
= \sum_{\gamma_i\in F_{CC_i}}\alpha(\gamma_0,\dots,\gamma_n)\mu(C\cap \gamma_0Dg_0^{-1}\cap \dots \cap \gamma_n Dg_n^{-1})\,,
\eqn
for any $(g_0,\dots,g_n)\in C_0\times \dots C_n$. 

The point (2) of Proposition follows since if $\alpha_n\rightarrow \alpha$ with pointwise convergence and 
$\sup_n \| \alpha_n\|_\infty < +\infty$ then $T\alpha_n\rightarrow T\alpha$ uniformly on compact sets.

\medskip
Finally, we show (1) by showing that the function 
\bqn
(g_0,\dots,g_n)\mapsto \mu(\{ C\cap \gamma_0Dg_0^-1\cap \dots \cap \gamma_n Dg_n^{-1}\})
\eqn
is continuous. 
To estimate the difference
\bqn
 \mu(C\cap \bigcap_{i=1}^n \gamma_i D g_i^{-1} )-\mu(C\cap \bigcap_{i=1}^n \gamma_i D h_i^{-1})
\eqn
we introduce the notation
\bqn
A(x_0,\dots ,x_n):=C \cap \bigcap_{i=0}^n \gamma_i D x_i^{-1} \,,
\eqn
for any $x_0,\dots, x_n\in G$. The above difference thus becomes
\bqn
\mu(A(g_0,\dots,g_n))-\mu(A(h_0,\dots,h_n))
\eqn
which we rewrite as a telescopic sum
\bqn
\sum_{i=0}^n\left( \mu(A(h_0,\dots,h_{i-1},g_{i},g_{i+1},\dots g_n))-\mu(A(h_0,\dots,h_{i-1},h_i,g_{i+1},\dots g_n))\right)\,.
\eqn
Setting 
\bqn
B_j:=C \cap \bigcap_{\ell=0}^{i-1} \gamma_\ell D h_\ell^{-1} \cap \bigcap_{\ell=i+1}^n \gamma_\ell D g_\ell^{-1}\,,
\eqn
the telescopic sum becomes
\bqn
\sum_ {j=0}^n (\mu(B_j\cap \gamma_jDg_j^{-1})-\mu(B_j\cap \gamma_j Dh_j^{-1}))\,.
\eqn
Using the simple set theoretical inequality valid for any sets $B,E,E'$
\bqn
|\mu(B\cap E)-\mu(B\cap E')| \leq \mu((B\cap E)\Delta (B\cap E'))\leq \mu(E\Delta E')\,,
\eqn
we obtain for each summand the estimate
\bqn
|\mu(B_j\cap \gamma_jDg_j^{-1})-\mu(B_j\cap \gamma_j Dh_j^{-1})|\leq \mu(  \gamma_jDg_j^{-1}\Delta  \gamma_jDh_j^{-1})=\| \chi_{Dg_j^{-1}h_j}-\chi_D\|_1\,.
\eqn
Thus
\bqn
\ba
&| \mu(C\cap \bigcap_{i=1}^n \gamma_i D g_i^{-1} )-\mu(C\cap \bigcap_{i=1}^n \gamma_i D h_i^{-1})|\\
\leq &\sum_{j=0}^n |\mu(B_j\cap \gamma_jDg_j^{-1})-\mu(B_j\cap \gamma_j Dh_j^{-1})|\\
\leq &\sum_{j=0}^n \| \chi_{Dg_j^{-1}h_j}-\chi_D\|_1\,.
\ea
\eqn
The continuity of the right regular action of $G$ on $L^1(G)$ concludes the proof of the proposition.
\end{proof}


\begin{proof}[Proof of Proposition~\ref{thm:cont}] Consider $\Gamma$ as a lattice in the full isometry group $\mathrm{Isom}(\mathbb{H}^n)$ and 
denote by $\varepsilon\colon \mathrm{Isom}(\mathbb{H}^n)\rightarrow \{-1,+1\}$ 
the homomorphism sending an isometry to $+1$ if it preserves orientation and $-1$ otherwise. 
By what precedes, the cohomology class $\mathrm{transf}(\rho^*(\omega_{\mathbb{H}^n}))$ 
can be represented by the continuous cocycle sending 
$(g_0,\dots,g_n)\in \mathrm{Isom}(\mathbb{H}^n)^{n+1}$ to
\bqn
\int_{\Gamma\setminus \mathrm{Isom}(\mathbb{H}^n)} \varepsilon(g) \omega_n(\rho(r(gg_0)),\dots,\rho(r(gg_n)))d\mu(g)\,.
\eqn
Note that the cocycle stays continues after transferring from $\mathrm{Isom}^+(\mathbb{H}^n)$ to $\mathrm{Isom}(\mathbb{H}^n)$. 
Integrating over a maximal compact subgroup $K$ in $\mathrm{Isom}(\mathbb{H}^n)$ 
we obtain a continuous cocycle $(\mathbb{H}^n)^{n+1}\rightarrow \RR$ 
that sends an $(n+1)$-tuple of points $g_0K,\dots,g_nK\in \mathrm{Isom}(\mathbb{H}^n)/K\cong \mathbb{H}^n$ to
\begin{equation}\label{ eq: integral for cocycle} 
\int_{K^{n+1}}\prod_{i=0}^ndk_i \int_{\Gamma\setminus \mathrm{Isom}(\mathbb{H}^n)} \varepsilon(g) \omega_n(\rho(r(gg_0k_0)),\dots,\rho(r(gg_nk_n)))d\mu(g)\,.
\end{equation}

We showed in \cite[Proposition 3.3]{Bucher_Burger_Iozzi_Mostow} that
\bqn 
\mathrm{transf}(\rho^*(\omega_n))=\frac{\Vol(\rho)}{\Vol(M)}\cdot \omega_{\mathbb{H}^n}\in \hcb^n(\mathrm{Isom}(\mathbb{H}^n),\RR_\varepsilon)\,.
\eqn 
Since there are no coboundaries in degree $n$ for $\mathrm{Isom}(\mathbb{H}^n)$-equivariant continuous bounded cochains on $\mathbb{H}^n$, 
this implies that we have a strict equality between (\ref{ eq: integral for cocycle}) and
\bqn
\frac{\Vol(\rho)}{\Vol(M)} \cdot \omega_n(g_0K,\dots,g_nK)\,.
\eqn
Since (\ref{ eq: integral for cocycle}) varies continuously in $\rho$, so does  $\Vol(\rho)$. \end{proof}

\vskip1cm
\bibliographystyle{alpha}

\vskip1cm

\newcommand{\etalchar}[1]{$^{#1}$}

 \end{document}